\newcommand{\ec}{\color{black}} 
\newtheorem{thm}{Theorem}[section]
\newtheorem{lem}[thm]{Lemma}
\newtheorem{prop}[thm]{Proposition}
\newtheorem{cor}[thm]{Corollary}
\newtheorem{conjecture}[thm]{Conjecture}
\theoremstyle{definition}
\newtheorem{remark}[thm]{Remark}
\numberwithin{equation}{section}
\newcommand{\FF}{\mathbb{F}}
\newcommand{\A}{\mathbb{A}}
\newcommand{\BB}{\mathbb{B}}
\newcommand{\DD}{\mathbb{D}}
\newcommand{\m}{\mathfrak{m}}
\def\ker{\mbox{\rm ker}}
\def\rank{\mbox{\rm rank}}
\def\Spec{\mbox{\rm Spec}}
\def\dim{\mbox{\rm dim}}
\newcommand{\mf}{\mathfrak}
\newcommand{\mb}{\mathbb}
\begin{document}

\title{Higher structure maps for free resolutions of length 3 and linkage}


\author{Lorenzo Guerrieri \thanks{ Jagiellonian University, Instytut Matematyki, Krak\'{o}w \textbf{Email address:} lorenzo.guerrieri@uj.edu.pl 
} 
\and Xianglong Ni
\thanks{UC Berkeley, Department of Mathematics \textbf{Email address:} xlni@berkeley.edu } 
\and Jerzy Weyman
\thanks{Jagiellonian University, Instytut Matematyki, Krak\'{o}w \textbf{Email address:} jerzy.weyman@uj.edu.pl }} 
\maketitle
\begin{abstract}
\noindent Let $I$ be a perfect ideal of height 3 in a Gorenstein local ring $R$. Let $\FF$ be the minimal free resolution of $I$. A sequence of linear maps, which generalize the multiplicative structure of $\FF$, can be defined using the generic ring associated to the format of $\FF$. Let $J$ be an ideal linked to $I$.
We provide formulas to compute some of these maps for the free resolution of $J$ in terms of those of the free resolution of $I$. 
We apply our results to describe classes of licci ideals, showing that a perfect ideal with Betti numbers $(1,5,6,2)$ is licci if and only if at least one of these maps is nonzero modulo the maximal ideal of $R$. \\ 
\noindent MSC: 13D02, 13C05, 13C40 \\
\noindent Keywords: free resolutions of length 3, linkage, generic ring
\end{abstract}






\section{Introduction}

In their landmark paper \cite{Peskine-Szpiro-linkage}, Peskine and Szpiro laid the modern algebraic foundations of the theory of linkage---a concept which had existed in some form since the work of Macaulay \cite{Macaulay}. They also show that, in codimension two, an ideal $I$ in a Gorenstein local ring $R$ is in the linkage class of a complete intersection (\emph{licci}) if and only if it is perfect---i.e. $R/I$ is a Cohen-Macaulay ring. (Ap\'ery \cite{Apery1},\cite{Apery2} and Gaeta \cite{Gaeta} had previously shown this for curves in $\mb{P}^3$.) 

Without the codimension two assumption, only the forward implication holds. Our focus will entirely be on perfect ideals of codimension three, and there are simple examples of such ideals which are \emph{not} licci, e.g. $(x,y,z)^2 \subset \mb{C}[x,y,z]_{(x,y,z)}$.

But there are some positive results: Watanabe showed that Gorenstein ideals of codimension three are licci \cite{Watanabe}. Since almost complete intersections are linked to Gorenstein ideals, they are licci as well. The following conjecture from \cite{CVWdynkin} extends this to a few other families:

\begin{conjecture}\label{licciconjecture}
	Let $I$ be a perfect ideal of codimension three in a Gorenstein local ring $S$ with residue field $k$. Let $r_i$ denote the Betti number $\rank \operatorname{Tor}_i(R/I, k)$. Suppose that $(1,r_1,r_2,r_3)$ is either $(1,n,n,1)$ for some $n$, $(1,4,n,n-3)$ for some $n$, $(1,5,6,2)$, $(1,6,7,2)$, $(1,5,7,3)$, $(1,7,8,2)$, or $(1,5,8,4)$.	Then $I$ is licci.
\end{conjecture}
The first two families in this list are the Gorenstein ideals and almost complete intersections respectively. The remaining five are more mysterious, and are explained by a deep connection to the ADE classification. It is also shown in \cite{CVWdynkin} that this conjecture is ``tight'' in the sense that there exists a perfect but not licci ideal having each sequence of Betti numbers not listed above.

Our study of linkage will be from the vantage point of free resolutions, revolving around the following useful lemma from \cite{Peskine-Szpiro-linkage}: if $\mb{A}$ resolves $R/I$ and $\mb{B}$ is a Koszul complex resolving $R/K$ where $K\subset I$ is generated by a regular sequence of maximal length, then a (non-minimal) resolution of $R/(K:I)$ can be obtained as the dual of the mapping cone of $\mb{B} \to \mb{A}$ extending $R/K \to R/I$.

The resolution $\mb{A}$ has the (non-unique) structure of a graded-commutative DGA. After choosing such a structure, there is a unique map of DGAs $\mb{B} \to \mb{A}$ covering $R/K \to R/I$. Hence the differentials in the resolution of $R/(K:I)$ can be understood in terms of the differentials and multiplicative structure of $\mb{A}$. In particular, one can show that the multiplication $\bigwedge^2 A_1 \to A_2$ must be nonzero mod $\mf{m}$ (i.e. contain units) in order for the total Betti number of $R/(K:I)$ to be lower than that of $R/I$. If $I$ is licci, then such reductions in total Betti number must happen eventually, and this observation tells us when it happens after the first link.

On the other hand, consider a Gorenstein ideal $I \subset R$, with Betti numbers $(1,n,n,1)$. We know $I$ to be licci from Watanabe's work, but the first minimal link does not yield a reduction in total Betti number: the linked ideal $J$ is an almost complete intersection with Betti numbers $(1,4,n,n-3)$. It is in the next link that a drop may occur, as $J$ in turn can be linked to a Gorenstein ideal on $(n-2)$ generators. Through the lens of the preceding discussion, this means that the resolution $\mb{D}$ of $R/J$ had units in the multiplication $\bigwedge^2 D_1 \to D_2$, although the resolution $\mb{A}$ of $R/I$ did not. The natural question to pose is whether units in $\bigwedge^2 D_1 \to D_2$ are heralded by some other structure on the original resolution $\mb{A}$ of $R/I$.

As Avramov, Kustin, and Miller analyzed the multiplicative structure on $\operatorname{Tor}_*(R/I,k)$ in \cite{AKM88}, they showed how the multiplication on $\mb{D}$ can be described in terms of structure maps computed from the original resolution $\mb{A}$, thereby answering the preceding. For this it was necessary to introduce two new maps which they call $X$ and $Y$.

The bulk of this paper is dedicated to going one step further: to show how the maps $X,Y$ on $\mb{D}$ can be related to additional structure maps computed from the original resolution $\mb{A}$. To achieve this, we first show how all the maps discussed above are merely the first few higher structure maps $w^{(i)}_j$ coming from Weyman's generic ring \cite{W18}.

Towards explaining these notions, define the \emph{format} of a free resolution \begin{equation}
\label{complexF}
\FF: 0 \longrightarrow R^{r_m} \buildrel{d_m}\over\longrightarrow  R^{r_{m-1}} \longrightarrow \cdots  \longrightarrow R^{r_1} \buildrel{d_1}\over\longrightarrow R^{r_0}
\end{equation}
to be the sequence $(r_0,r_1,\ldots,r_m)$. Resolutions with format $(1,n,n-1)$ are characterized by the Hilbert-Burch theorem: the differential $d_1$ is comprised of the $(n-1)\times (n-1)$ minors of $d_2$, multiplied by a nonzerodivisor. An alternative way of stating the theorem is as follows. Let $R_{univ}$ be the polynomial ring on variables $\{x_{ij}\}_{1 \leq i \leq n, 1 \leq j \leq n-1}$ and an additional variable $u$. Let $\mb{F}_{univ}$ be the free resolution
\[
\FF\colon 0 \to R_{univ}^{n-1} \overset{d_2}{\longrightarrow} R_{univ}^n \overset{d_1}{\longrightarrow} R_{univ}
\]
where $d_2$ is the generic matrix with entries $x_{ij}$, and the $i$th entry of $d_1$ is $(-1)^i u M_i$ where $M_i$ is the minor of $d_2$ excluding the $i$th row. Then the pair $(R_{univ},\mb{F}^{univ})$ is \emph{universal} for free resolutions of format $(1,n,n-1)$ in the sense that if $\mb{F}$ is such a resolution over some ring $R$, there exists a unique ring homomorphism $R_{univ} \to R$ so that $\mb{F} = \mb{F}^{univ} \otimes R$.

The idea of using universal resolutions to understand the structure theory of free resolutions was adopted by Hochster in \cite{H75}, who also found $(R_{univ},\mb{F}^{univ})$ for formats of length two where $r_0 > 1$. However, for formats of length three and beyond, Bruns \cite{Br84} showed that $(R_{univ}, \mb{F}^{univ})$ does not exist. The issue lies with the requirement that the map $R_{univ} \to R$ be unique for each resolution $\mb{F}$. If we drop this uniqueness requirement, then we get the weaker notion of a generic pair $(R_{gen},\mb{F}^{gen})$, and Bruns showed that such objects always exist.

Although this settled the question of existence, one would like to understand the generic ring and resolution more explicitly, as that is what ultimately translates to concrete structure theorems about free resolutions. Over the complex numbers, this was done by Weyman for formats of length three in \cite{w89} and \cite{W18}. In \S2, we review how one can obtain structure maps $w^{(i)}_j$ for the resolution $\mb{F}$ via Weyman's construction.

In \S3 we show that the maps $X,Y$ from \cite{AKM88} can be reinterpreted in this framework as the structure maps $w^{(2)}_2, w^{(3)}_2$. Guided by this connection to the generic ring, the rest of \S3 is devoted to continuing the pattern one step further, and showing how the maps $w^{(i)}_2$ on $\mb{D}$ can be expressed in terms of various structure maps $w^{(i)}_j$ on the resolution $\mb{A}$. The proofs reduce to the verification of identities relating the higher structure maps $w^{(i)}_j$, which are deferred to \S5. 

In \S4, we apply the preceding results to linkage, in the setting of a local Gorenstein ring with infinite residue field. We also discuss the case $S/I$ has Betti numbers $(1,5,6,2)$ at length, which is one of the cases listed in Conjecture~\ref{licciconjecture}. In fact, those cases are exactly the ones for which only finitely many higher structure maps are nonzero. For $(1,5,6,2)$, there are few enough structure maps that we can describe them all. Although we do not prove Conjecture~\ref{licciconjecture} for $(1,5,6,2)$, we reduce it to the concrete question of whether these maps are nonzero mod $\mf{m}$ in Theorem 4.6. 

The program outlined here and the theorems in \S3 strongly suggest a pattern which continues beyond the structure maps explicitly considered in this paper. In a sequel to this paper, we hope to extend the results of \S3 and \S4 in a way which circumvents the computational difficulties of working explicitly with higher structure maps.

\section{Preliminaries}


\subsection{The generic ring}

For this subsection only, we will assume that $R$ is a $\mb{C}$-algebra. In this paper we will only consider formats $(1,r_1,r_2,r_3)$, i.e. ones arising for resolutions of cyclic modules. Fixing such a format, let $({\hat R}_{gen}, \mb{F}^{gen})$ denote Weyman's generic pair. Let $F_i = \mb{C}^{r_i}$, so that
\[
	\mb{F}^{gen} \colon 0 \to F_3 \otimes {\hat R}_{gen} \to F_2 \otimes {\hat R}_{gen} \to F_1 \otimes {\hat R}_{gen} \to \mb{C} \otimes {\hat R}_{gen}.
\]
The Lie algebra $\prod \mf{gl}(F_i)$ acts on ${\hat R}_{gen}$. Inside the generic ring are the representations $F_2^* \otimes F_3$, $F_2 \otimes F_1^*$, and $F_1$: the $\mb{C}$-linear spans of the entries of $d_3, d_2, d_1$ respectively. 

We do not go into the details here, but there is a graph $T_{p,q,r}$ (depending on the format) and an associated Kac-Moody Lie algebra $\mf{g}(T_{p,q,r})$ with $\mf{g} = \mf{gl}(F_2) \times \mf{g}(T_{p,q,r})$ acting on ${\hat R}_{gen}$. Each differential $d_i$ generates a representation $W(d_i)$ of $\mf{g}$ inside of ${\hat R}_{gen}$. We call these the three \emph{critical representations}. Decomposing these representations with respect to the grading induced by a certain root of $\mf{g}(T_{p,q,r})$, one finds
\begin{gather*}
	W(d_3)=F^*_2 \otimes [F_3 \oplus \bigwedge^2 F_1 \oplus \bigwedge^4 F_1 \otimes F_3^* \oplus \cdots]\\
	W(d_2)=F_2 \otimes [F_1^* \oplus F_3^*\otimes F_1 \oplus \bigwedge^3 F_1 \otimes \bigwedge^2 F_3^* \oplus \cdots]\\
	W(d_1)={\mathbb{C}} \otimes [F_1 \oplus F_3^* \otimes \bigwedge^3 F_1 \oplus \cdots] 
\end{gather*}
In particular, the differentials reside in the bottom graded components.

Given a resolution $\mb{F}$ over a $\mb{C}$-algebra $R$, with format $(1,r_1,r_2,r_3)$ as fixed before, the genericity of $({\hat R}_{gen}, \mb{F}^{gen})$ yields a (non-unique) homomorphism $w\colon {\hat R}_{gen} \to R$ for which we have $\mb{F} = \mb{F}^{gen} \otimes R$. Let $w^{(i)}_j$ denote the restriction of $w$ to the $j$th graded piece of $W(d_i) \subset {\hat R}_{gen}$, where the bottom piece is $j=0$. For example, $w^{(3)}_0$ is a $\mb{C}$-linear map $F_2^* \otimes F_3 \to R$, i.e. an $R$-linear map $F_3 \otimes R \to F_2 \otimes R$, which is exactly $d_3$ of the resolution $\mb{F}$. Likewise $w^{(2)}_0$ and $w^{(1)}_0$ give $d_2$ and $d_1$.

For brevity, we will abuse notation and just write $F_i$ for $F_i \otimes R$ when the meaning is clear from context. When $j=1$, we obtain maps $w^{(3)}_1\colon \bigwedge^2 F_1 \to F_2$, $w^{(2)}_1\colon F_1 \otimes F_2 \to F_3$, and $w^{(1)}_1\colon \bigwedge^3 F_1 \to F_3$. By analyzing the relations in ${\hat R}_{gen}$, one can show that these maps endow $\mb{F}$ with the structure of a commutative differential graded algebra. Explicitly, writing $\lbrace e_1, \ldots, e_{r_1}\rbrace$, $\lbrace f_1, \ldots, f_{r_2} \rbrace$, $\lbrace g_1, \ldots, g_{r_3}\rbrace$ for the bases of $F_1, F_2, F_3$ respectively,
\begin{gather}\label{multiplicationrelations}
	\begin{gathered}
	d_2(e_i^.e_j)= d_1(e_i)e_j- d_1(e_j)e_i \qquad  d_3(e_i^.f_h)= d_1(e_i)f_h- e_i^.d_2(f_h) \\  
	 d_3(e_i^.e_j^.e_k)= d_1(e_i)e_j^.e_k - d_1(e_j)e_i^.e_k + d_1(e_k)e_i^.e_j 
	\end{gathered}
\end{gather}
where $e_i^. e_j = w^{(3)}_1(e_i\wedge e_j)$, $e_i^. f_h = w^{(2)}_1(e_i \otimes f_h)$, and $e_i^. e_j^. e_k = w^{(1)}_1(e_i \wedge e_j \wedge e_k)$. This multiplicative structure has been well-known since the famous Buchsbaum-Eisenbud papers \cite{BE74}, \cite{BE77}.

For most formats $(1,r_1,r_2,r_3)$, the Lie algebra $\mf{g}$ is infinite-dimensional as are the critical representations. Consequently, resolutions $\mb{F}$ of such formats have infinitely many higher structure maps $w^{(i)}_j$. The exceptions are the formats listed in Conjecture~\ref{licciconjecture}; in these cases the graph $T_{p,q,r}$ is a Dynkin diagram. Accordingly, we call these the \emph{Dynkin formats}.

The formats $(1,n,n,1)$ and $(1,4, n, n-3)$ are associated to $D_n$. Their critical representations are described in \cite{GW20} and all the structure maps are explicitly computed in the case where $\FF$ is a split exact complex, or the direct sum of a generic Hilbert-Burch complex of length 2 with a split exact complex. For the $E_6$ format $(1,5,6,2)$, which we investigate in \S4, the critical representations are:
$$W(d_3)=F^*_2 \otimes [F_3 \oplus \bigwedge^2 F_1 \oplus \bigwedge^4 F_1 \otimes F_3^* \oplus \bigwedge^5 F_1 \otimes F_1 \otimes \bigwedge^2 F_3^* ],$$
$$W(d_2)=F_2 \otimes [F_1^* \oplus F_3^*\otimes F_1 \oplus \bigwedge^3 F_1 \otimes \bigwedge^2 F_3^* \oplus \bigwedge^5 F_1 \otimes S_{2,1} F_3^*],$$
$$W(d_1)={\mathbb{C}} \otimes [F_1 \oplus F_3^* \otimes \bigwedge^3 F_1 \oplus (\bigwedge^2F_3^* \otimes \bigwedge^4 F_1 \otimes F_1 +  S_2F_3^* \otimes \bigwedge^5F_1 + \bigwedge^2F_3^* \otimes \bigwedge^5 F_1) \oplus  $$
$$ \oplus S_{2,1}F_3^* \otimes S_{2,2,1,1,1}F_1 \oplus S_{2,2}F_3^* \otimes S_{2,2,2,2,1}F_1].  $$
For the other Dynkin formats, including ones where $r_0 > 1$, tables describing the critical representations can be found in \cite{LW19}.


 
\subsection{Higher structure maps}
To avoid having to assume that $R$ is a $\mb{C}$-algebra, we will not define $w^{(i)}_j$ in terms of homomorphisms ${\hat R}_{gen} \to R$ as in \S2.1. Instead we explicitly define each structure map via a lift. Of course, the following definitions are motivated by the relations which hold in ${\hat R}_{gen}$, but we do not rely on any technical results pertaining to ${\hat R}_{gen}$ in what follows. Although there are infinitely many higher structure maps in general, here we will only consider those necessary to understand the behavior of $w^{(i)}_2$ under linkage. Their definitions are valid over any ring $R$ containing $1/2$ and $1/3$.

Let $\mb{F}$ be a resolution of format $(1,r_1,r_2,r_3)$ over a ring $R$. We illustrate the preceding for $w^{(i)}_1$. Rather than referencing ${\hat R}_{gen}$, we simply define $w^{(3)}_1$ to be a lift in the diagram
\[\begin{tikzcd}
0 \ar[r] & F_3 \ar[r] & F_2 \ar[r] & F_1 \ar[r] & R\\
& & \bigwedge^2 F_1 \ar[u, dashed, "w^{(3)}_1"] \ar[ur, "q^{(3)}_1",swap]
\end{tikzcd}\]
where $q^{(3)}_1(e_i \wedge e_j) = d_1(e_i)e_j - d_1(e_j)e_i$. This lift is not unique; if $M \colon \bigwedge^2 F_1 \to F_3$ is any map, then $w^{(3)}_1 + d_3 M$ is another valid lift.

Similarly we define $w^{(2)}_1$ and $w^{(1)}_1$ as lifts
\[\begin{tikzcd}
0 \ar[r] & F_3 \ar[r] & F_2 \ar[r] & F_1 \\
& F_1 \otimes F_2 \ar[u, dashed, "w^{(2)}_1"] \ar[ur, "q^{(2)}_1",swap]
\end{tikzcd}\]
\[\begin{tikzcd}
0 \ar[r] & F_3 \ar[r] & F_2 \ar[r] & F_1 \\
& \bigwedge^3 F_1\ar[u, dashed, "w^{(1)}_1"] \ar[ur, "q^{(1)}_1",swap]
\end{tikzcd}\]
where $q^{(2)}_1(e_i \otimes f_j) = d_1(e_i) f_j - w^{(3)}_1(e_i \wedge d_2(f_j))$ and $q^{(1)}_1 (e_i \wedge e_j \wedge e_k) = d_1(e_i)w^{(3)}_1(e_j \wedge e_k) - d_1(e_j)w^{(3)}_1(e_i \wedge e_k) + d_1(e_k) w^{(3)}_1(e_i \wedge e_j)$. With these definitions, the relations \eqref{multiplicationrelations} are satisfied.


We define $w^{(3)}_2: \bigwedge^4 F_1 \to F_3 \otimes F_2$ by lifting in the complex: 
\[\begin{tikzcd}
0 \ar[r] & \bigwedge^2 F_3 \ar[r] & F_3 \otimes F_2 \ar[r] & S_2 F_2 \ar[r] & S_2 F_1\\
&& \bigwedge^4 F_1 \ar[u,dashed,"w^{(3)}_2"] \ar[ur,"q^{(3)}_2",swap]
\end{tikzcd}\]
where 
$$ q^{(3)}_2(e_i\wedge e_j\wedge e_k\wedge e_l)= e_i^.e_j \otimes e_k^.e_l - e_i^.e_k \otimes e_j^.e_l + e_i^.e_l \otimes e_j^.e_k. $$
It is not hard to check that the composition of $q^{(3)}_2$ with the map $S_2 F_2 \to S_2F_1$ is zero. As was the case for $w^{(3)}_1$, the lift for $w^{(3)}_2$ is not unique.

Similarly, $w^{(2)}_2: \bigwedge^3 F_1 \otimes F_2 \to \bigwedge^2 F_3$ is the lift of $q^{(2)}_2$ in the diagram
\[\begin{tikzcd}
0 \ar[r] & \bigwedge^2 F_3 \ar[r] & F_3 \otimes F_2 \ar[r] & S_2 F_2\\
& \bigwedge^3 F_1 \otimes F_2 \ar[u,"w^{(2)}_2",dashed] \ar[ur, "q^{(2)}_2", swap]
\end{tikzcd}\]
where 
$$  q_2^{(2)}(e_i\wedge e_j\wedge e_k \otimes f_h) = e_i^.e_j \otimes e_k^.f_h - e_i^.e_k \otimes e_j^.f_h+ e_j^.e_k \otimes e_i^.f_h + $$
$$  -w^{(3)}_{2}(e_i\wedge e_j\wedge e_k \wedge d_2(f_h))  + f_h \otimes e_i^.e_j^.e_k.  $$
The behavior of these two maps under linkage for grade 3 perfect ideals is the main subject of the next section.

\medskip

Let us also define a few more maps coming from the critical representations. We will use the notation $w^{(i)}_{j,1}$ instead of $w^{(i)}_j$ to emphasize that, in general, the $j$th graded component of $W(d_i)$ may have multiple irreducible components---the following maps only correspond to a portion thereof. For compactness of notation we denote by $\varepsilon_{i_1, \ldots, i_t}$ the wedge product $e_{i_1} \wedge \ldots \wedge e_{i_t}$. Starting with $W(d_3)$, the map $ w^{(3)}_{3,1}: \bigwedge^5 F_1 \otimes F_1 \to \bigwedge^2 F_3 \otimes F_2 $ is defined as lifting along the map $\bigwedge^2 F_3 \otimes F_2 \to F_3 \otimes S_2F_2 \to F_3 \otimes F_2 \otimes F_2 $ (induced by $d_3$ and by symmetrization of $F_2 \otimes F_2$) of the term
\begin{equation}
\label{q3,3eq}
q^{(3)}_{3,1}(\varepsilon_{1,\ldots,5} \otimes e_6):= \sum_{i=1}^5 (-1)^{i+1} w^{(3)}_2(e_1 \wedge \ldots \widehat{e_i} \ldots \wedge e_5) \otimes e_i^.e_6  + $$
$$
+ \frac{1}{2} \sum_{1 \leq i<j \leq 5} (-1)^{i+j} w^{(3)}_2(e_1 \wedge \ldots \widehat{e_{i,j}}  \ldots \wedge e_6) \otimes e_i^.e_j.
\end{equation}
after applying the symmetrization map $F_3 \otimes F_2 \otimes F_2 \to F_3 \otimes S_2F_2$.
Similarly, the map $ w^{(3)}_{4,1}: \bigwedge^5 F_1 \otimes \bigwedge^3 F_1 \to \bigwedge^3 F_3 \otimes F_2 $ is defined as lifting along the map $\bigwedge^3 F_3 \otimes F_2 \to \bigwedge^2 F_3 \otimes S_2F_2 \to (F_3 \otimes F_2 \otimes F_3 \otimes F_2 +  \bigwedge^2 F_3 \otimes F_2 \otimes F_2)$ (induced by $d_3$ and by usual wedge product and symmetrization) of the term
\begin{equation}
\label{q3,4eq}
q^{(3)}_{4,1}(\varepsilon_{1,\ldots,5} \otimes \varepsilon_{6,7,8}):= 2[\sum_{i=1}^5 (-1)^{i+1} w^{(3)}_2(e_1 \wedge \ldots \widehat{e_i} \ldots \wedge e_5) \otimes  w^{(3)}_2(e_i \wedge e_6 \wedge e_7 \wedge e_8) + $$
$$ + \sum_{i,j,k=6,7,8} (-1)^{i+1} w^{(3)}_{3,1}(\varepsilon_{1,\ldots,5} \otimes e_i) \otimes e_j^.e_k +  \frac{1}{3} \sum_{1 \leq i<j \leq 5} \sum_{k=6}^8 (-1)^{i+j+k} w^{(3)}_{3,1}(\varepsilon_{1,\ldots \hat{i},\hat{j}, \hat{k}, \ldots, 8} \otimes e_k) \otimes e_i^.e_j].
\end{equation}


For $j \geq 2$, there are multiple irreducible components in the $j$th graded piece of $W(d_1)$ in general. We define the map $w^{(1)}_{2,1}: \bigwedge^4 F_1 \otimes F_1 \to \bigwedge^2 F_3$ as follows: $w^{(1)}_{2,1}(e_1 \wedge e_2 \wedge e_3 \wedge e_4 \otimes e_5)$ is the lift of
\begin{equation}
\label{q1,2eq}
q^{(1)}_{2,1}(\varepsilon_{1,\ldots,4} \otimes e_5):=  e_1^.e_5 \otimes e_2^.e_3^.e_4 - e_2^.e_5 \otimes e_1^.e_3^.e_4 + e_3^.e_5 \otimes e_1^.e_2^.e_4 - e_4^.e_5 \otimes e_1^.e_2^.e_3 + $$
$$ +d_1(e_1) w^{(3)}_2(\varepsilon_{2,3,4,5})-d_1(e_2) w^{(3)}_2(\varepsilon_{1,3,4,5})+d_1(e_3) w^{(3)}_2(\varepsilon_{1,2,4,5})-d_1(e_4) w^{(3)}_2(\varepsilon_{1,2,3,5})
\end{equation}
along the map $\bigwedge^2 F_3 \to F_3 \otimes F_2$ induced by $d_3$.
It it not hard to check that the composition of $q^{(1)}_{2,1}$ with the map $F_3 \otimes F_2 \to S_2F_2$ induced by $d_3$ is zero. We can say briefly that $w^{(1)}_{2,1}$ is the lift of the relation $w^{(3)}_1 \otimes w^{(1)}_1 - w^{(3)}_2 \otimes d_1$.

The next map $w^{(1)}_{3,1}: \bigwedge^4 F_1 \otimes \bigwedge^3 F_1 \to \bigwedge^3 F_3$ is defined similarly as lifting of the relation $w^{(3)}_1 \otimes w^{(1)}_{2,1} - w^{(3)}_2 \wedge w^{(1)}_{1} +  w^{(3)}_{3,1} \otimes d_1$. Explicitly, the term to lift is 
\begin{equation}
\label{q1,3eq} 
q^{(1)}_{3,1}(\varepsilon_{1,\ldots,4} \otimes \varepsilon_{5,6,7}):= \sum_{i= 5}^7 (-1)^{i+1} w^{(1)}_{2,1}(\varepsilon_{1,\ldots,4} \otimes e_i) \otimes e_j^.e_k+ $$
 $$  + \sum_{i=1}^4 (-1)^{i+1} w^{(3)}_{2}(\varepsilon_{5,6,7,i}) \otimes e_j^.e_k^.e_r - \sum_{i=1}^4 \sum_{j=5}^7 (-1)^{i+j} d_1(e_i) w_{3,1}^{(3)}(e_1 \wedge \ldots \widehat{e_{i,j}}  \ldots \wedge e_7 \otimes e_j)
\end{equation}

Finally, we can define analogous maps in $W(d_2)$ generalizing $w^{(2)}_2$. The map $w^{(2)}_{3,1}: \bigwedge^4 F_1 \otimes  F_1 \otimes F_2 \to \bigwedge^3 F_3$ is defined by lifting the relation $ w^{(1)}_{2,1} \otimes 1_{F_2} + w^{(3)}_1 \otimes w^{(2)}_{2} - w^{(3)}_2 \wedge w^{(2)}_{1} +  w^{(3)}_{3,1}(d_2)$.

The map $w^{(2)}_{4,1}: S_{2221} F_1 \otimes F_2 \to \bigwedge^4 F_3$ is defined by lifting the relation $ w^{(1)}_{3,1} \otimes 1_{F_2} + w^{(3)}_1 \otimes w^{(2)}_{3,1} - w^{(3)}_2 \wedge w^{(2)}_{2} +  w^{(3)}_{3,1}\wedge w^{(2)}_{1} - w^{(3)}_{4,1}(d_2)$. Explicitly:
\begin{equation}
\label{q2,3eq}
q^{(2)}_{3,1}(\varepsilon_{1,\ldots,4} \otimes e_5 \otimes f_h):= 
w^{(1)}_{2,1}(\varepsilon_{1,\ldots,4} \otimes e_5) \otimes f_h -  w^{(3)}_{3,1}(\varepsilon_{1,\ldots,5} \otimes d_2(f_h)  + \varepsilon_{1,\ldots,4} \wedge d_2(f_h) \otimes e_5)+ $$
$$ + \sum_{j=1}^4 (-1)^j e_j^.e_5 \otimes w^{(2)}_2(e_1 \wedge \ldots \widehat{e_{j}}  \ldots \wedge e_4 \otimes f_h) - \sum_{j=1}^4 (-1)^j e_j^.f_h \otimes w^{(3)}_2(e_1 \wedge \ldots \widehat{e_{j}}  \ldots \wedge e_5). 
\end{equation}
\begin{equation}
\label{q2,4eq} 
q^{(2)}_{4,1}(\varepsilon_{1,\ldots,4} \otimes \varepsilon_{5,6,7} \otimes f_h):= 
\frac{1}{2} w^{(3)}_{4,1}(\varepsilon_{1,\ldots,4} \wedge d_2(f_h) \otimes \varepsilon_{5,6,7}) 
-  w^{(1)}_{3,1}(\varepsilon_{1,\ldots,4} \otimes \varepsilon_{5,6,7}) \otimes f_h+ $$
$$ + \sum_{i=5}^7 (-1)^{i+1} [e_j^.e_k \otimes w^{(2)}_{3,1}(\varepsilon_{1,\ldots,4} \otimes e_i \otimes f_h)+  w^{(2)}_2(e_5, e_6, e_7 \otimes f_h) \otimes w^{(3)}_2(\varepsilon_{1,\ldots,4}). 
\end{equation}

\subsection{Generic structure maps and relations}
We will frequently need to verify relations among the higher structure maps $w^{(i)}_j$ for an arbitrary resolution $\mb{F}$ of a given format. For example, as we inductively defined the maps $w^{(i)}_j$ in \S2.2, we needed to know that certain composites were zero in order to lift $q^{(i)}_j$. Sometimes the relations are easy to verify directly, but it is often more convenient to leverage the following result (c.f. \cite[Lemma 2.4]{w89}, \cite[Proposition 10.4]{W18}):
\begin{thm} 
	\label{equivariantrelations}
	Fix a format $(1,r_1,r_2,r_3)$. If a relation among $w^{(i)}_j$ holds for every choice of structure maps over every split exact complex of the given format, then it holds in general.
	
	In particular, to verify a $\prod_{i=1}^3 GL(F_i)$-equivariant set of relations on the maps $w^{(i)}_j$, it is sufficient to check them for every choice of higher structure maps for one split exact complex.
\end{thm}
\begin{proof}
	Since we defined structure maps in terms of lifts, it is evident that if $R \to S$ is a ring homomorphism and $\{w^{(i)}_j\}$ is a collection of structure maps for a resolution $\mb{F}$ over $R$, then $\{w^{(i)}_j \otimes S\}$ is a collection of structure maps for the complex $\mb{F} \otimes S$. In particular, structure maps remain valid under localization.
	
	Writing $d_1$ for the first differential of $\mb{F}$, let $u\in I(d_1)$ be a nonzerodivisor. Such an element exists because $\operatorname{grade} I(d_1) \geq 1$. Then $\mb{F}$ is a split exact complex on the open set $\Spec R_u$, which is moreover dense in $\Spec R$. Thus a relation holds for $w^{(i)}_j$ if and only if it holds for the localized structure maps over a split exact complex.
	
	The second statement of the theorem follows immediately, as all split exact complexes are equivalent up to a change of basis.
\end{proof}
In order to verify relations for arbitrary choices of structure maps, we introduce the notion of \emph{generic} structure maps $v^{(i)}_j$ for a resolution $\mb{F}$. We define these inductively using the same lifts as for $w^{(i)}_j$, replacing all instances of $w^{(i)}_j$ with $v^{(i)}_j$ in the definitions of the maps $q^{(i)}_j$ that we lift. The difference is that, when the lift is not unique, we parametrize all possible lifts with additional variables. To define $v^{(3)}_1$ for example, we adjoin variables $b_{ij}^k$ ($1\leq i < j \leq r_1$ and $1 \leq k \leq r_3$), which we call \emph{defect variables}, and set
\[
	v^{(3)}_1 = w^{(3)}_1 + d_3 M
\]
where $w^{(3)}_1$ is a particular lift of $q^{(3)}_1$ and $M(e_i \wedge e_j) = b_{ij}^k g_k$ for $i<j$. That is, $M$ is a generic map $\bigwedge^2 F_1 \to F_3$. Evidently the maps $v^{(i)}_j$ specialize to any particular choice of structure maps $w^{(i)}_j$, so Theorem~\ref{equivariantrelations} implies it is sufficient to verify equivariant relations on maps $v^{(i)}_j$ computed over a particular split exact complex. When these calculations arise, we defer them to \S5, with the especially cumbersome ones left to a computer. More background on the maps $v^{(i)}_j$ can be found in that section as well.

\section{Linkage of higher structure maps}
The aim of this section is to describe how some of the structure maps can be computed for the free resolution of a linked ideal, in terms of the structure maps of a given free resolution of the original ideal. In particular we are interested in $w^{(3)}_2$ and $w^{(2)}_2$, as the multiplicative structure has already been studied in \cite{AKM88}.



From now on, we prefer to slightly change the notation from the previous sections to match the notation of \cite{AKM88}. 
Our setting is the following:
let $R$ be a Gorenstein local (or graded) ring with maximal ideal $\m$, with $2,3\notin \m$. Let $I \subseteq R$ be a perfect ideal of height 3. The minimal free resolution of $\frac{R}{I}$ is
\begin{equation}
\label{basecomplex}
\A: 0 \longrightarrow A_3 \buildrel{a_3}\over\longrightarrow  A_2 \buildrel{a_2}\over\longrightarrow A_1 \buildrel{a_1}\over\longrightarrow R.
\end{equation}
Set $r_i= \rank A_i$. Denote the entries of the matrices of $a_1, a_2, a_3$ respectively by $\lbrace x_i \rbrace$, $\lbrace y_{ij} \rbrace$, $\lbrace z_{ij} \rbrace$.
Given a regular sequence $\mathfrak{a} \subseteq I$ of maximal length, we denote by $J$ the linked ideal $(\mathfrak{a}):I$.
Let $\BB$ be the Koszul complex resolving $\frac{R}{(\mathfrak{a})}$ and let $\alpha_i: B_i \to A_i$ be the map obtained by lifting the quotient map $\pi: \frac{R}{(\mathfrak{a})} \to \frac{R}{I}, $ after fixing the choice of a multiplicative structure on $\A$.

Take basis for $B_1$ equal to $\lbrace s_1, s_2, s_3  \rbrace$, basis for $B_2$ equal to $\lbrace t_1, t_2, t_3 \rbrace$ and basis for $B_3$ equal to $ \lbrace w \rbrace $. The multiplicative structure on $\BB$ provides relations $s_i^.s_j= (-1)^{i+j+1} t_k$ and $s_1^.s_2^.s_3 = w$.
For $i=1,2,3$ let $\tau_i$ be the isomorphism $B_i^* \to B_{3-i}$ induced by such structure. Define maps $\beta_i: A_i^* \to B_{3-i}$ setting $\beta_i:= \tau_i \alpha_i^*$. The mapping cone of the complex map $\A^* \to \BB$ defined by the maps $\beta_i$ gives a free resolution $\DD$ of $\frac{R}{J}$ (not necessarily minimal). 
We have 
\begin{equation}
\label{mappingcone}
\DD: 0 \longrightarrow A_1^* \buildrel{d_3}\over\longrightarrow  A_2^* \oplus B_2 \buildrel{d_2}\over\longrightarrow A_3^* \oplus B_1 \buildrel{d_1}\over\longrightarrow R.
\end{equation}
The free modules in the complex $\DD$ will be also denoted by $D_3, D_2, D_1$.
The differentials are given by the following formulas:
$$ d_1 = \bmatrix \beta_3 & b_1 \endbmatrix; \quad  d_2 = \bmatrix a_3^* & 0 \\ -\beta_2 & -b_2 \endbmatrix; \quad d_3 = \bmatrix a_2^*  \\ \beta_1  \endbmatrix. $$
The entries of $\beta_1$ are simply the coefficients which express the elements of $\mathfrak{a}$ in function of the fixed set of minimal generators of $I$ determined by the entries of $d_1$. 
The matrices of the maps $\beta_2$, $\beta_3$ can be obtained from the multiplicative structure on $\A$. 

In this section we denote the basis of $A_1, A_2, A_3$ respectively by $\lbrace e_1, \ldots, e_{r_1}  \rbrace$, $\lbrace f_1, \ldots, f_{r_2} \rbrace$, $\lbrace g_1, \ldots, g_{r_3}  \rbrace$ and the dual basis by $\lbrace \epsilon_1, \ldots, \epsilon_{r_1}  \rbrace$, $\lbrace \phi_1, \ldots, \phi_{r_2} \rbrace$, $\lbrace \gamma_1, \ldots, \gamma_{r_3}  \rbrace$.
We also denote by 
$u_{ij}$ the entries of $\alpha_1$ and by $\langle \cdot, \cdot \rangle$ the usual evaluation of an element of a module with respect to an element of the dual. 

\begin{remark}
\label{notation}
The elements of the regular sequence $\mathfrak{a}$ are $\sum_{i=1}^{r_1} u_{ij} x_i$ for $j=1,2,3$. Hence for $j=1,2,3$ we have  
$\alpha_1(s_j) = \sum_{i=1}^{r_1} u_{ij} e_i.$ 
When considering a minimal linkage (i.e. the elements of $\mathfrak{a}$ are among minimal generators of $I$), we can assume $(\mathfrak{a})=(x_1, x_2, x_3)$ and $\alpha_1(s_j) =  e_j$ for $j=1,2,3$.

For the maps $\beta_1$, $\beta_2$, $\beta_3$ we have formulas
$$ \beta_1(\epsilon_k)= \sum_{j=1}^3 u_{kj} t_j; \quad \beta_2(\phi_h)= \sum_{j=1}^3 (-1)^{j+1} \langle \alpha_1(s_{k_1})^.\alpha_1(s_{k_2}), \phi_h \rangle s_j $$
where $k_1, k_2$ are the two indices in $ \lbrace  1,2,3 \rbrace $ different from $j$, and 
$$ \beta_3(\gamma_t)= \langle \alpha_1(s_1)^.\alpha_1(s_2)^.\alpha_1(s_3), \gamma_t  \rangle.  $$
\end{remark}


To describe the multiplicative structure on $\DD$, Avramov, Kustin and Miller introduced the next two linear maps \cite[Lemma 1.9 and 1.10]{AKM88}:
 
The map $X: \bigwedge^3 A_1 \otimes \bigwedge^2 A_3 \to A_2^*$ is defined as the lift of $$ \langle e_i e_j e_k, \gamma_s   \rangle \gamma_t -
\langle e_i e_j e_k, \gamma_t   \rangle \gamma_s $$ along the map $a_3^*$. 

The map $Y: \bigwedge^3 A_1 \otimes A_3^* \otimes A_2^* \to A_1^*$ is defined by the relation
$$ \langle f_h, a_2^*(Y(e_i \wedge e_j \wedge e_k \otimes \gamma_s \otimes \phi_l)) \rangle = \langle e_i^.e_j^.e_k, \gamma_s  \rangle \langle f_h, \phi_l \rangle - \langle f_h, X(e_i \wedge e_j \wedge e_k \otimes \gamma_s \wedge a_3^*(\phi_l))  \rangle + $$
$$ - \langle e_i^.e_j, \phi_l \rangle \langle e_k^.f_h, \gamma_s \rangle + \langle e_i^.e_k, \phi_l \rangle \langle e_j^.f_h, \gamma_s \rangle - \langle e_j^.e_k, \phi_l \rangle \langle e_i^.f_h, \gamma_s \rangle. $$
These maps are needed to prove the following theorem. 

\begin{thm}
\label{AKMthm}
\rm  (\cite[Theorem 1.13]{AKM88})  \it \\
The multiplication maps $\bigwedge^2 D_1 \to D_2$ and $ D_1 \otimes D_2 \to D_3$ are described as follows. 
$$
s_i^.s_j = (-1)^{i+j+1}t_k.
\quad
s_i^. \gamma_t = \sum_{h=1}^{r_2} \langle \alpha_1(s_i)^.f_h,  \gamma_t \rangle \phi_h.
$$ $$
\gamma_u^. \gamma_t = X(\alpha_1(s_1) \wedge \alpha_1(s_2) \wedge \alpha_1(s_3) \otimes  \gamma_u \wedge \gamma_t) + \lambda(\gamma_u \wedge \gamma_t).
$$
$$
s_j^.t_p = \langle s_j^.t_p, w^*  \rangle (\sum_{k=1}^{r_1}x_k \epsilon_k ).
\quad
s_j^. \phi_h = \sum_{k=1}^{r_1} \langle \alpha_1(s_j)^.e_k,  \phi_h \rangle \epsilon_k.
$$ $$
\gamma_u^. t_j = \sum_{k=1}^{r_1} \langle \alpha_2(t_j)^.e_k,  \gamma_u \rangle) \epsilon_k.
$$ $$
\gamma_u^. \phi_h =  Y(\alpha_1(s_1)\wedge \alpha_1(s_2) \wedge \alpha_1(s_3) \otimes \phi_h \otimes \gamma_u) +  a_1^*(\mu(\gamma_u \wedge \phi_h)). 
$$
\end{thm}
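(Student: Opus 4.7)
The plan is to verify all seven multiplication formulas by directly checking, case by case, the Leibniz rules
$$d_2(\xi\cdot\eta) = d_1(\xi)\eta - d_1(\eta)\xi \quad (\xi,\eta\in D_1), \qquad d_3(\xi\cdot\zeta) = d_1(\xi)\zeta - \xi\cdot d_2(\zeta) \quad (\xi\in D_1,\ \zeta\in D_2).$$
Since $D_1 = A_3^*\oplus B_1$ and $D_2 = A_2^*\oplus B_2$, I would organize the argument according to which summand each factor lies in, write each side of the Leibniz identity as a pair of components in the appropriate splitting, and match them one summand at a time using the block form of $d_1,d_2,d_3$ given by the mapping-cone recipe. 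The fact that $\A$ is acyclic and $I$ is perfect of grade three (so that the dualized complex computes $\omega_{R/I}$ and admits the required lifts) is used throughout to ensure that every element we encounter at the end of a chain is in the image of the previous differential.

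The trivial cases are those in which both factors come from $\BB$: the formulas then collapse to the Koszul DGA structure on $\BB$. The \emph{mixed} cases $s_i\cdot \gamma_t$, $s_j\cdot\phi_h$, $\gamma_u\cdot t_j$ and $s_j\cdot t_p$ are handled by expanding both sides using the explicit description of $\beta_1,\beta_2,\beta_3$ in Remark \ref{notation} and the compatibility relations $a_i\alpha_i = \alpha_{i-1}b_i$ together with the Leibniz identities
$$a_2(e_i^.e_j) = a_1(e_i)e_j - a_1(e_j)e_i, \qquad a_3(e_i^.f_h) = a_1(e_i)f_h - e_i^.a_2(f_h)$$
for the multiplicative structure on $\A$. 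Each identity reduces, after pairing with $\gamma_u$ or $\phi_h$, to an algebraic rearrangement.

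The two essential cases are $\gamma_u\cdot\gamma_t$ and $\gamma_u\cdot\phi_h$. For the first, the defining property of $X$ gives $a_3^*\bigl(X(\alpha_1(s_1)\wedge\alpha_1(s_2)\wedge\alpha_1(s_3)\otimes\gamma_u\wedge\gamma_t)\bigr) = \beta_3(\gamma_u)\gamma_t - \beta_3(\gamma_t)\gamma_u$, which is precisely the $A_3^*$-component of $d_1(\gamma_u)\gamma_t - d_1(\gamma_t)\gamma_u$; the correction $\lambda(\gamma_u\wedge\gamma_t)\in B_2$ must then be chosen so that the $B_1$-component of $d_2(\gamma_u\cdot\gamma_t)$, namely $-\beta_2(X(\cdots)) - b_2(\lambda(\gamma_u\wedge\gamma_t))$, vanishes. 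Existence of $\lambda$ requires $\beta_2\circ X(\cdots)$ to be a Koszul cycle, which follows by a short computation from $b_1\beta_2 = \beta_3 a_3^*$ and the defining property of $X$. The case $\gamma_u\cdot\phi_h$ is parallel: the term $Y(\alpha_1(s_1)\wedge\alpha_1(s_2)\wedge\alpha_1(s_3)\otimes\phi_h\otimes\gamma_u)$ accounts for the $A_1^*$-component of $d_3(\gamma_u\cdot\phi_h)$ by the defining relation for $Y$ (after pairing against arbitrary $f_h$), while $\mu(\gamma_u\wedge\phi_h)$ is a Koszul correction chosen to kill the Koszul part of the Leibniz residual.

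The main obstacle is precisely showing that these lifts $X$, $Y$, $\lambda$, $\mu$ exist, i.e.\ that the expressions one is trying to lift are cycles at the appropriate stage of the Koszul (or dualized) complex. The verifications are bookkeeping on three ingredients: the Leibniz rules for the multiplication on $\A$, the chain-map relations $a_i\alpha_i = \alpha_{i-1}b_i$, and the self-duality $\tau_i\colon B_i^*\xrightarrow{\sim} B_{3-i}$ of the Koszul complex used to define the $\beta_i$. Once these cycle conditions are in hand, each of the seven displayed formulas is then a direct substitution into the corresponding Leibniz identity, completing the proof.
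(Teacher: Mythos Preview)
The paper does not prove this theorem; it is quoted from \cite[Theorem~1.13]{AKM88} and stated without proof. Your sketch is the correct direct approach and is essentially what one must do: verify each product against the Leibniz rule using the block form of $d_1,d_2,d_3$, with the existence of the corrections $\lambda$ and $\mu$ reduced to showing that certain residuals are Koszul cycles via the chain-map identity $b_1\beta_2=\beta_3 a_3^*$ (which is just $d_1d_2=0$ for the mapping cone). This is presumably the argument in the original reference.

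The only thing the paper adds beyond the citation is the explicit description, immediately after the statement, of $\lambda(\gamma_u\wedge\gamma_t)$ as the lift along $-b_2$ of $\beta_2\bigl(\sum_h\langle w^{(2)}_2(\alpha_1(s_1),\alpha_1(s_2),\alpha_1(s_3)\otimes f_h),\gamma_u\wedge\gamma_t\rangle\phi_h\bigr)$. Your functional characterization of $\lambda$ (``chosen so that the $B_1$-component vanishes'') is equivalent once one knows Lemma~\ref{XY}, which identifies $X$ with the $w^{(2)}_2$-expression modulo $\ker a_3^*$. The paper's subsequent work (Lemmas~\ref{XY}, \ref{lambdacorrection} and Proposition~\ref{w3,1link}) is precisely to replace $X,Y$ by the structure maps $w^{(2)}_2,w^{(3)}_2$ and to show that with that choice one may take $\lambda=0$ and $\mu=0$.
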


In the above theorem, the term $\lambda(\gamma_u \wedge \gamma_t) \in B_2$ is defined as the lift of $$\beta_2(\sum_{h=1}^{r_2} \langle w^{(2)}_2(\alpha_1(s_1),\alpha_1(s_2),\alpha_1(s_3)  \otimes f_h),  \gamma_u \wedge \gamma_t \rangle \phi_h)$$
 along the differential $-b_2$ in the Koszul complex $\BB$. The term $\mu$ is also defined along the proof. 

\subsection{The multiplicative structure on $\DD$}

In the first part of this section we reinterpret the maps $X$ and $Y$ in terms of the structure maps $w^{(3)}_2$,$ w^{(2)}_2 $. This allows us to find a simplified version of Theorem \ref{AKMthm}, showing that there exists an opportune lifting for which $\lambda$ and $\mu$ are zero. We have:



\begin{lem}
\label{XY}
For any choice of indices, the following relations hold:
\begin{equation}
\label{X}
X(e_i \wedge e_j \wedge e_k, \gamma_s \wedge \gamma_t) \equiv   \sum_{h=1}^{r_2} \langle w^{(2)}_2(e_i \wedge e_j \wedge e_k, f_h),  \gamma_s \wedge \gamma_t \rangle \phi_h \, \mbox{ \rm mod } \ker(a_3^*).
\end{equation}
Replace the term $\langle f_h, X \rangle$ in the definition of $Y$ by 
$ \langle w^{(2)}_2(e_i \wedge e_j \wedge e_k, f_h),  \gamma_s \wedge a_3^*(\phi_l) \rangle $. Then we get
\begin{equation}
\label{Y}
Y(e_i \wedge e_j \wedge e_k \otimes \gamma_s \otimes \phi_l) \equiv \sum_{t=1}^{r_1} \langle w^{(3)}_2(e_{i}, e_{j}, e_{k}, e_t ), \phi_l \otimes \gamma_s  \rangle) \epsilon_t \, \mbox{ \rm mod } \ker(a_2^*).
\end{equation}
\end{lem}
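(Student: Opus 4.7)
\emph{Overall approach.} The plan is to reduce each congruence to an equivariant identity among lower-order structure maps, which can then be verified on the generic split exact complex via Theorem \ref{equivariantrelations}. Since $X$ and $Y$ are characterized modulo $\ker a_3^*$ and $\ker a_2^*$ by their lifting conditions, it suffices to show that $a_3^*$ and $a_2^*$, applied to the proposed right-hand sides of (\ref{X}) and (\ref{Y}), recover the defining element of $X$ and the (modified) defining element of $Y$ respectively.

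\emph{Relation (\ref{X}).} I would first compute $a_3^*$ of the right-hand side. Using $a_3^*(\phi_h) = \sum_u \langle a_3(g_u), \phi_h \rangle \gamma_u$, swapping the order of summation, and applying multilinearity of $w^{(2)}_2$ in its second argument (combined with $\sum_h \langle a_3(g_u), \phi_h \rangle f_h = a_3(g_u)$), one gets
$$a_3^*\!\Bigl( \sum_h \langle w^{(2)}_2(e_i \wedge e_j \wedge e_k \otimes f_h), \gamma_s \wedge \gamma_t \rangle \phi_h \Bigr) = \sum_u \langle w^{(2)}_2(e_i \wedge e_j \wedge e_k \otimes a_3(g_u)), \gamma_s \wedge \gamma_t \rangle \gamma_u.$$
Matching the coefficient of $\gamma_u$ with that of $\langle e_i^.e_j^.e_k, \gamma_s \rangle \gamma_t - \langle e_i^.e_j^.e_k, \gamma_t \rangle \gamma_s$ reduces (\ref{X}) to the $\prod_{i=1}^3 \GL(A_i)$-equivariant scalar identity
$$\langle w^{(2)}_2(e_i \wedge e_j \wedge e_k \otimes a_3(g_u)), \gamma_s \wedge \gamma_t \rangle = \langle e_i^.e_j^.e_k, \gamma_s \rangle \delta_{ut} - \langle e_i^.e_j^.e_k, \gamma_t \rangle \delta_{us},$$
which (modulo pairing conventions) is equivalent to $w^{(2)}_2(e_i \wedge e_j \wedge e_k \otimes a_3(g_u)) = (e_i^.e_j^.e_k) \wedge g_u$ in $\bigwedge^2 A_3$. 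By Theorem \ref{equivariantrelations}, this need only be verified on the generic split exact complex of format $(1,r_1,r_2,r_3)$, a calculation that is carried out in Section 5.

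\emph{Relation (\ref{Y}) and main obstacle.} A completely parallel computation, now using $a_2^*(\epsilon_t) = \sum_h \langle a_2(f_h), \epsilon_t \rangle \phi_h$ and multilinearity of $w^{(3)}_2$ in its last slot, produces
$$a_2^*\!\Bigl( \sum_t \langle w^{(3)}_2(e_i \wedge e_j \wedge e_k \wedge e_t), \phi_l \otimes \gamma_s \rangle \epsilon_t \Bigr) = \sum_h \langle w^{(3)}_2(e_i \wedge e_j \wedge e_k \wedge a_2(f_h)), \phi_l \otimes \gamma_s \rangle \phi_h.$$
Equating the coefficient of $\phi_h$ with the modified right-hand side of the AKM defining equation for $\langle f_h, a_2^*(Y(\cdots))\rangle$ reduces (\ref{Y}) to the equivariant identity
\begin{align*}
\langle w^{(3)}_2(e_i \wedge e_j \wedge &e_k \wedge a_2(f_h)), \phi_l \otimes \gamma_s \rangle
 = \langle e_i^.e_j^.e_k, \gamma_s \rangle \langle f_h, \phi_l \rangle
 - \langle w^{(2)}_2(e_i \wedge e_j \wedge e_k \otimes f_h), \gamma_s \wedge a_3^*(\phi_l) \rangle \\
&\; - \langle e_i^.e_j, \phi_l \rangle \langle e_k^.f_h, \gamma_s \rangle
 + \langle e_i^.e_k, \phi_l \rangle \langle e_j^.f_h, \gamma_s \rangle
 - \langle e_j^.e_k, \phi_l \rangle \langle e_i^.f_h, \gamma_s \rangle,
\end{align*}
which again by Theorem \ref{equivariantrelations} reduces to the split exact case. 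In essence this identity is an incarnation of the defining equation $\partial w^{(3)}_2 = q^{(3)}_2$ (with its last $F_1$-slot specialised to $a_2(f_h)$) combined with the $w^{(3)}_2$-term appearing in $q^{(2)}_2$. The conceptual reduction above is routine; the main obstacle lies in tracking the signs and tensor-factor orderings between $A_2 \otimes A_3$ and $A_3 \otimes A_2$, and in carrying out the explicit split-exact verification of both reduced identities, which is relegated to Section 5.
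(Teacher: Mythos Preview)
Your proposal is correct and follows essentially the same approach as the paper: both reduce the congruences to equivariant identities via the defining lifting conditions for $X$ and $Y$, then invoke Theorem~\ref{equivariantrelations} to reduce to the split exact complex (Lemma~\ref{split1} in Section~5). Your write-up makes the intermediate reduction more explicit than the paper's one-line proof, but the substance is identical.
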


\begin{proof}
By Theorem \ref{equivariantrelations} it is sufficient to check both relations over a split exact complex. This is done in Lemma \ref{split1}.
\end{proof}


 



\begin{lem}
\label{lambdacorrection}
The term $\lambda(\gamma_u \wedge \gamma_t)$ appearing in (\ref{eqlambda}) can be chosen to be zero.
\end{lem}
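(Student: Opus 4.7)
The plan is to show that the element of $B_1$ whose lift along $-b_2$ defines $\lambda(\gamma_u \wedge \gamma_t)$ is identically zero (with the representative of $X$ furnished by Lemma \ref{XY}); then $\lambda = 0$ is a valid lift. Concretely, the claim reduces to establishing
$$\beta_2\!\left(\sum_{h=1}^{r_2} \langle w^{(2)}_2(\alpha_1(s_1) \wedge \alpha_1(s_2) \wedge \alpha_1(s_3) \otimes f_h),\; \gamma_u \wedge \gamma_t\rangle\, \phi_h\right) = 0.$$

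First I would expand $\beta_2$ via the formula in Remark \ref{notation} and contract the sum over $h$ using the duality identity $\sum_h \langle v, \phi_h\rangle f_h = v$ for $v \in A_2$, applied with $v = \alpha_1(s_{k_1})\!\cdot\!\alpha_1(s_{k_2})$. By bilinearity of $w^{(2)}_2$ in its second argument, the problem reduces to showing the vanishing of the coefficient of each $s_j$ in
$$\sum_{j=1}^{3} (-1)^{j+1}\, \bigl\langle w^{(2)}_2\bigl(\alpha_1(s_1)\wedge\alpha_1(s_2)\wedge\alpha_1(s_3)\otimes \alpha_1(s_{k_1})\!\cdot\!\alpha_1(s_{k_2})\bigr),\; \gamma_u \wedge \gamma_t\bigr\rangle\, s_j,$$
where $\{j,k_1,k_2\} = \{1,2,3\}$ and the products $\alpha_1(s_{k_1})\!\cdot\!\alpha_1(s_{k_2}) \in A_2$ are computed using the chosen multiplicative structure on $\A$.

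Next I would invoke Theorem \ref{equivariantrelations}: the expression above is a $\prod_{i=1}^{3} GL(A_i)$-equivariant relation among the differentials of $\A$, its multiplicative structure, the map $w^{(2)}_2$, and the matrix $(u_{ij})$ entering $\alpha_1$ regarded as generic data. Consequently, to verify the vanishing in the generic ring it suffices to verify it on the split exact complex of format $(1, r_1, r_2, r_3)$ with a generic choice of structure maps, exactly as in the reduction carried out in the proof of Lemma \ref{XY}.

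The main obstacle is therefore the explicit calculation on the split exact complex: one substitutes the generic expressions for $\alpha_1(s_i)$, writes $w^{(2)}_2$ via its defining lift of $q^{(2)}_2$ (with the defect variables introduced in Section 2), expands each product $\alpha_1(s_{k_1})\!\cdot\!\alpha_1(s_{k_2})$ through the generic multiplicative structure, and checks that the resulting multilinear expression cancels against every $\gamma_u \wedge \gamma_t$. One simplification I would exploit is that the $w^{(3)}_2$-correction occurring in $q^{(2)}_2$ drops out of the substitution, since its wedge argument contains $d_2(\alpha_1(s_{k_1}) \cdot \alpha_1(s_{k_2})) \in R\!\cdot\!\alpha_1(s_{k_1}) + R\!\cdot\!\alpha_1(s_{k_2})$ and thus produces a repetition when wedged with $\alpha_1(s_1) \wedge \alpha_1(s_2) \wedge \alpha_1(s_3)$. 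Nevertheless, the remaining bookkeeping of defect variables is substantial, and I would defer this computation to Section 5 alongside the split-exact verifications supporting Lemma \ref{XY}.
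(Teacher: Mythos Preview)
Your proposal is correct and follows essentially the same route as the paper: show $\beta_2(\Theta)=0$ by expanding $\beta_2$ via Remark~\ref{notation}, then reduce to a $GL$-equivariant identity checked on the split exact complex (the paper's Lemma~\ref{split2}). The one place where the paper is slightly more careful is in the invocation of Theorem~\ref{equivariantrelations}: that theorem applies to $\prod GL(F_i)$-equivariant relations in $\hat R_{gen}$, not to expressions that still carry the auxiliary data $(u_{ij})$; the paper handles this by expanding $\alpha_1(s_j)=\sum_i u_{ij}e_i$ and isolating the coefficient of each monomial in the $u_{ij}$'s, which is the 4-term symmetric sum appearing in Lemma~\ref{split2}. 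Your phrase ``regarded as generic data'' gestures at the same step but does not quite justify it, so when you write this up you should make that expansion explicit before appealing to Theorem~\ref{equivariantrelations}.
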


\begin{proof}
Let $ \Theta := \sum_{h=1}^{r_2} \langle w^{(2)}_2(\alpha_1(s_1),\alpha_1(s_2),\alpha_1(s_3)  \otimes f_h),  \gamma_u \wedge \gamma_t \rangle \phi_h$.
We have to show that $\beta_2(\Theta)= 0.$ By definition of $\beta_2$ (see Remark \ref{notation}) we know that 
$$ \beta_2(\phi_h)= \sum_{j=1}^3 (-1)^{j+1} \langle \alpha_1(s_{k_1})^.\alpha_1(s_{k_2}), \phi_h \rangle s_j $$
where $k_1, k_2$ are 
the two indices in $ \lbrace  1,2,3 \rbrace $ different from $j$. We show that the coefficient of $s_1$ in $ \beta_2(\Theta) $ is zero. For those of $s_2, s_3$ the argument is analogous. 
The coefficient of $s_1$ is
\begin{equation}
\label{relationforlambda}
\sum_{h=1}^{r_2} \langle w^{(2)}_2(\alpha_1(s_{1}),\alpha_1(s_{2}),\alpha_1(s_{3})  \otimes f_h),  \gamma_u \wedge \gamma_t \rangle) \cdot \langle \alpha_1(s_{2})^.\alpha_1(s_{3}), \phi_h \rangle.
\end{equation}
Since $\mathfrak{a}$ is a regular sequence, the map $\alpha_1$ is injective and we can take $\alpha_1(s_{1}),\alpha_1(s_{2}),\alpha_1(s_{3})$ to be linearly independent elements of $A_1$. By linearity of $w^{(2)}_2$ and $ w^{(3)}_1$ we can expand 
(\ref{relationforlambda}) in terms involving generators of $F_1$ and coefficients depending on the minors of $\alpha_1$. We can group together the terms having the same coefficient.
We need therefore to show that, for every choice of $e_{i_1},e_{i_2},e_{i_3},e_{j_2},e_{j_3} \in A_1$ and $\gamma_u, \gamma_t \in A_3^*$, the term
$$  \sum_{h=1}^{r_2} \langle w^{(2)}_2(e_{i_1},e_{i_2},e_{i_3}  \otimes f_h),  \gamma_u \wedge \gamma_t \rangle \cdot \langle e_{j_2},e_{j_3}, \phi_h \rangle +\langle w^{(2)}_2(e_{i_1},e_{i_2},e_{j_3}  \otimes f_h),  \gamma_u \wedge \gamma_t \rangle \cdot \langle e_{j_2},e_{i_3}, \phi_h \rangle + $$ $$
\langle w^{(2)}_2(e_{i_1},e_{j_2},e_{i_3}  \otimes f_h),  \gamma_u \wedge \gamma_t \rangle \cdot \langle e_{i_2},e_{j_3}, \phi_h \rangle + \langle w^{(2)}_2(e_{i_1},e_{j_2},e_{j_3}  \otimes f_h),  \gamma_u \wedge \gamma_t \rangle \cdot \langle e_{i_2},e_{i_3}, \phi_h \rangle  $$
is zero.
In Lemma \ref{split2}, we perform the required computation over a split exact complex with defect variables. 
By Theorem \ref{equivariantrelations} the same relations hold over the complex $\mathbb{D}$.
\end{proof}

As a consequence of Lemma \ref{XY} and Lemma \ref{lambdacorrection}, Theorem \ref{AKMthm} 
can be restated in the following way:


\begin{prop}
\label{w3,1link}
The multiplication maps $\bigwedge^2 D_1 \to D_2$ and $ D_1 \otimes D_2 \to D_3$ are described as follows. 
\begin{equation}
s_i^.s_j = (-1)^{i+j+1}t_k.
\end{equation}
\begin{equation}
s_i^. \gamma_t = \sum_{h=1}^{r_2} \langle \alpha_1(s_i)^.f_h,  \gamma_t \rangle \phi_h.
\end{equation}
\begin{equation}
\label{eqlambda}
\gamma_u^. \gamma_t = \sum_{h=1}^{r_2} \langle w^{(2)}_2(\alpha_1(s_1),\alpha_1(s_2),\alpha_1(s_3)  \otimes f_h),  \gamma_u \wedge \gamma_t \rangle \phi_h.
\end{equation}
\begin{equation}
s_j^.t_p = \langle s_j^.t_p, w^*  \rangle (\sum_{k=1}^{r_1}x_k \epsilon_k ).
\end{equation}
\begin{equation}
s_j^. \phi_h = \sum_{k=1}^{r_1} \langle \alpha_1(s_j)^.e_k,  \phi_h \rangle \epsilon_k.
\end{equation}
\begin{equation}
\label{w2,1gt}
\gamma_u^. t_j  = 
\sum_{k=1}^{r_1} (-1)^{j+1} \langle \alpha_1(s_{k_1})^.\alpha_1(s_{k_2})^.e_k,  \gamma_u \rangle) \epsilon_k, \mbox{ with } k_1,k_2 \in  \lbrace 1,2,3 \rbrace \setminus \lbrace j \rbrace.
\end{equation}
\begin{equation}
\label{w2,1gp}
\gamma_u^. \phi_h =  \sum_{k=1}^{r_1} \langle w^{(3)}_2(\alpha_1(s_1), \alpha_1(s_2), \alpha_1(s_3), e_k ), \phi_h \otimes \gamma_u  \rangle) \epsilon_k. 
\end{equation}
\end{prop}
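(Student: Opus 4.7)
The plan is to obtain Proposition~\ref{w3,1link} by direct substitution in the formulas of Theorem~\ref{AKMthm}, applying Lemma~\ref{XY} to rewrite $X$ and $Y$ in terms of the higher structure maps $w^{(2)}_2$ and $w^{(3)}_2$, and invoking Lemma~\ref{lambdacorrection} (together with a parallel argument for the formula for $\gamma_u \cdot \phi_h$) to eliminate the correction terms $\lambda$ and $\mu$. The formulas for $s_i^. s_j$, $s_i^. \gamma_t$, $s_j^. t_p$, and $s_j^. \phi_h$ are transcribed verbatim from Theorem~\ref{AKMthm} and require no further argument.

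For $\gamma_u^. \gamma_t$: by Lemma~\ref{XY}, equation~\eqref{X}, the term $X(\alpha_1(s_1) \wedge \alpha_1(s_2) \wedge \alpha_1(s_3) \otimes \gamma_u \wedge \gamma_t)$ appearing in Theorem~\ref{AKMthm} is congruent, modulo $\ker(a_3^*) \subset A_2^*$, to the stated $w^{(2)}_2$-expression. The indeterminacy is absorbed precisely by the freedom in choosing the lift that defines the multiplicative structure in the $A_2^*$ summand of $D_2$, so we may fix a lift making the equality hold on the nose. Lemma~\ref{lambdacorrection} then says that with this choice the $B_2$-valued correction $\lambda(\gamma_u \wedge \gamma_t)$ can be taken to be zero, producing~\eqref{eqlambda}.

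For $\gamma_u^. t_j$: the chain map $\alpha : \BB \to \A$ may be chosen compatibly with the multiplicative structures on its source (Koszul) and target, so that $\alpha_2(s_i^. s_j) := \alpha_1(s_i)^. \alpha_1(s_j)$. Combined with the Koszul identity $t_j = (-1)^{j+1}\, s_{k_1}^. s_{k_2}$ for $\{k_1 < k_2\} = \{1,2,3\} \setminus \{j\}$, this yields $\alpha_2(t_j) = (-1)^{j+1} \alpha_1(s_{k_1})^. \alpha_1(s_{k_2})$. Substituting into the Theorem~\ref{AKMthm} formula $\gamma_u^. t_j = \sum_k \langle \alpha_2(t_j)^. e_k, \gamma_u \rangle \epsilon_k$ immediately gives~\eqref{w2,1gt}.

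For $\gamma_u^. \phi_h$: by Lemma~\ref{XY}, equation~\eqref{Y}, after the substitution there described in the defining relation for $Y$, the term $Y(\alpha_1(s_1) \wedge \alpha_1(s_2) \wedge \alpha_1(s_3) \otimes \phi_h \otimes \gamma_u)$ agrees with the desired $w^{(3)}_2$-expression modulo $\ker(a_2^*) \subset A_1^* = D_3$. Since $\mathrm{im}(a_1^*) \subseteq \ker(a_2^*)$, the additional correction $a_1^*(\mu(\gamma_u \wedge \phi_h))$ from Theorem~\ref{AKMthm} lies inside this same indeterminacy and can be folded into the choice of lift. To promote this to the exact equality~\eqref{w2,1gp}, one argues as in Lemma~\ref{lambdacorrection}: the vanishing of the total correction is a $\prod_{i=1}^{3} GL(A_i)$-equivariant relation among the structure maps, which by Theorem~\ref{equivariantrelations} reduces to a verification on the split exact complex with defect variables, performed in Section~5 in parallel with the computation of Lemma~\ref{split2}. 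This last step, a representation-theoretic check at the level of $W(d_3)$ mirroring the $W(d_2)$ computation for $\lambda$, is the main obstacle in what is otherwise a routine substitution.
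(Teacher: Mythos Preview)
Your argument is correct for the first five formulas and for $\gamma_u^. t_j$, and matches the paper's proof essentially verbatim there. The difference is in your treatment of $\gamma_u^. \phi_h$.

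You propose to dispose of the correction term by an equivariance reduction to the split complex, claiming a verification ``performed in Section~5 in parallel with the computation of Lemma~\ref{split2}''. No such computation is in Section~5, and none is needed. The paper's argument is direct and much simpler: since $\DD$ is acyclic, $d_3 = \begin{pmatrix} a_2^* \\ \beta_1 \end{pmatrix}$ is injective, so the product $\gamma_u^. \phi_h$ is uniquely determined once the multiplication $\bigwedge^2 D_1 \to D_2$ is fixed. One then checks that the proposed $w^{(3)}_2$-expression has the correct image under $d_3$. The $A_2^*$-component agrees by Lemma~\ref{XY} (equation~\eqref{Y}). For the $B_2$-component, the right-hand side $d_1(\gamma_u)\phi_h - \gamma_u^. d_2(\phi_h)$ has vanishing $B_2$-part because $\lambda = 0$. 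On the other side, using $\beta_1(\epsilon_k) = \sum_{j} u_{kj} t_j$ and $\sum_k u_{kj} e_k = \alpha_1(s_j)$, the coefficient of $t_j$ in $\beta_1$ of the $w^{(3)}_2$-expression is
\[
\langle w^{(3)}_2(\alpha_1(s_1),\alpha_1(s_2),\alpha_1(s_3),\alpha_1(s_j)),\, \phi_h \otimes \gamma_u \rangle,
\]
which vanishes because $w^{(3)}_2$ is evaluated on a wedge of four elements two of which coincide. Thus the skew-symmetry of $w^{(3)}_2$ on $\bigwedge^4 A_1$ replaces your ``main obstacle'' entirely; no representation-theoretic check at the level of $W(d_3)$ is required.
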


\begin{proof}
By Theorem \ref{AKMthm}, Lemma \ref{XY} and Lemma \ref{lambdacorrection} we only need to prove relations (\ref{w2,1gt}) and (\ref{w2,1gp}). 
For the first one it is sufficient to observe that $\alpha_2(t_j) = (-1)^{j+1} \alpha_1(s_{k_1})^.\alpha_1(s_{k_2})$.

For the second one, comparing all these results also with \cite[Lemma 1.9, 1.10]{AKM88} we obtain that the components of $d_1(\gamma_u) \phi_h - \gamma_1^.d_2(\phi_h)$ with respect to $\phi_1, \ldots, \phi_{r_2}$ coincide with those of
$$ a_2^*(\sum_{k=1}^{r_1} \langle w^{(3)}_2(\alpha_1(s_1), \alpha_1(s_2), \alpha_1(s_3), e_k ), \phi_h \otimes \gamma_u  \rangle) \epsilon_k). $$ 

Since $\lambda= 0$, the term $d_1(\gamma_u) \phi_h - \gamma_1^.d_2(\phi_h)$ has no nonzero components with respect to $t_1,t_2,t_3$.  
We only need to show that $$ \beta_1(\sum_{k=1}^{r_1} \langle w^{(3)}_2(\alpha_1(s_1), \alpha_1(s_2), \alpha_1(s_3), e_k ), \phi_h \otimes \gamma_u  \rangle) \epsilon_k)=0. $$
But $\beta_1(\epsilon_k)= \sum_{j=1}^3 u_{kj} t_j $ and $\sum_{k=1}^{r_1} u_{kj} e_k = \alpha_1(s_j) .$ Thus the coefficients of $t_j$ in the above term is 
$$ \langle w^{(3)}_2(\alpha_1(s_1), \alpha_1(s_2), \alpha_1(s_3), \alpha_1(s_j) ), \phi_h \otimes \gamma_u  \rangle. $$ This is zero since we are applying $w^{(3)}_2$ to a wedge product of four elements, two of which are equal.
\end{proof}

\begin{remark}
\label{remarkw2,1}
Proposition \ref{w3,1link} implies that some particular relations are satisfied by the structure maps.
Using the relation $ d_3(s_j^. \phi_h) = d_1(s_j) \phi_h - s_j^.(d_2(\phi_h))$ we obtain 
\begin{equation}
\label{eqw21,1}
\sum_{k=1}^{r_1}  y_{k\rho}  
\langle  e_k^.e_i, \phi_h \rangle =  -\delta_{\rho h} x_i + \sum_{u=1}^{r_3} z_{hu} \langle e_i f_{\rho}, \gamma_u \rangle, 
\end{equation}  
where $\delta_{\rho h}$ denotes the classical Kronecker delta.

Using the relation $ d_3(\gamma_u^. \phi_h) = d_1(\gamma_u) \phi_h - s_j^.(d_2(\phi_h))$ we obtain 
$$ \sum_{k=1}^{r_1}  y_{k\rho}  
\langle  w^{(3)}_2(\alpha_1(s_1), \alpha_1(s_2), \alpha_1(s_3), e_k ), \phi_h \otimes \gamma_1 \rangle =   - \delta_{\rho h}
 \langle \alpha_1(s_1)^.\alpha_1(s_2)^.\alpha_1(s_3), \gamma_1 \rangle+ $$  $$ + \sum_{u=1}^{r_3} z_{hu} \langle w^{(2)}_2(\alpha_1(s_1),\alpha_1(s_2),\alpha_1(s_3) \otimes f_{\rho}), \gamma_1 \wedge\gamma_u \rangle+ \langle \alpha_1(s_3)^.f_{\rho}, \gamma_1 \rangle \cdot \langle \alpha_1(s_1)^.\alpha_1(s_2), \phi_h \rangle+$$
\begin{equation}
\label{eqw21,2}
- \langle \alpha_1(s_2)^.f_{\rho}, \gamma_1 \rangle \cdot \langle \alpha_1(s_1)^.\alpha_1(s_3), \phi_h \rangle +   \langle \alpha_1(s_1)^.f_{\rho}, \gamma_1 \rangle \cdot \langle \alpha_1(s_2)^.\alpha_1(s_3), \phi_h \rangle. 
\end{equation}
Expanding linearly this equation with respect to the coefficients given by the maximal minors of $\alpha_1$ we obtain that the same equality holds replacing one or more $\alpha_1(s_j)$ by generators $e_j$ of $A_1$.
\end{remark}

\medskip

We pass now to identify the formulas for the multiplication map $w^{(1)}_1: \bigwedge^3 D_1 \to D_3.$ This map could be computed using the previous two multiplications and the associativity of the multiplicative structure. However, it is interesting to compute it following the definition.
 For compactness, let us use the notation $\varepsilon_{s_1,s_2, s_3}:= \alpha_1(s_1) \wedge \alpha_1(s_2) \wedge \alpha_1(s_3), $ and
$\varepsilon_{i,s_1,s_2, s_3}:= e_i \wedge \varepsilon_{s_1,s_2, s_3}. $

\begin{thm}
\label{w1,1link}
The multiplication map $ \bigwedge^3 D_1 \to D_3$ is described as follows. 
\begin{equation}
\label{w1,1,sss}
s_1^.s_2^.s_3 =  w = \sum_{i=1}^{r_1}x_i \epsilon_i.
\end{equation}
\begin{equation}
\label{w1,1,gss}
\gamma_1^.s_1^.s_2 = \sum_{i=1}^{r_1} \langle \alpha_1(s_1)^.\alpha_1(s_2)^.e_i, \gamma_1 \rangle \epsilon_i.
\end{equation}
\begin{equation}
\label{w1,1,ggs}
\gamma_1^.\gamma_2^.s_1 = \sum_{i=1}^{r_1} \langle w_{2,1}^{(1)}(\varepsilon_{i,s_1,s_2, s_3} \otimes \alpha_1(s_1)),  \gamma_1 \wedge \gamma_2 \rangle \epsilon_i.
\end{equation}
\begin{equation}
\label{w1,1,ggg}
\gamma_1^.\gamma_2^.\gamma_3 = \sum_{i=1}^{r_1} \langle w^{(1)}_{3,1}(\varepsilon_{i,s_1,s_2, s_3} \otimes \varepsilon_{s_1,s_2, s_3} ), \gamma_1 \wedge \gamma_2 \wedge \gamma_3  \rangle \epsilon_i. 
\end{equation}
The same formulas hold for all the possible combinations of basis elements $\gamma_u$ and $s_j$.
\end{thm}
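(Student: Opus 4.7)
The approach is to verify each of the four formulas by checking the Leibniz defining property of triple products in the differential graded algebra $\DD$: for $U,V,W \in D_1$, the element $\xi := U \cdot V \cdot W \in D_3$ must satisfy
$$d_3(\xi) = d_1(U)(V \cdot W) - d_1(V)(U \cdot W) + d_1(W)(U \cdot V),$$
where the binary products are supplied by Proposition \ref{w3,1link}. Because $\DD$ has length three and resolves $R/J$, $d_3$ is injective, so this equation determines $\xi$ uniquely. For each case I would substitute the proposed $\xi_{UVW}$ and match the $A_2^*$- and $B_2$-components of $d_3 = \bigl[\begin{smallmatrix} a_2^* \\ \beta_1 \end{smallmatrix}\bigr]$ against the right-hand side.

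First I would handle (\ref{w1,1,sss}) and (\ref{w1,1,gss}), where no higher structure maps appear. The $B_2$-part is computed from $\beta_1(\epsilon_i) = \sum_j u_{ij} t_j$ combined with $\sum_i u_{ij} e_i = \alpha_1(s_j)$; the triple wedge $\alpha_1(s_1) \wedge \alpha_1(s_2) \wedge \alpha_1(s_j)$ vanishes by antisymmetry unless $\{j\}$ fills the missing index, leaving the single surviving $t_3$-contribution $\beta_3(\gamma_1)\,t_3$, which matches the right-hand side. For (\ref{w1,1,sss}) the $A_2^*$-part is $a_2^*(a_1)=0$ since $a_1 a_2=0$. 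For (\ref{w1,1,gss}) the $A_2^*$-part collapses by linearity to $\langle \alpha_1(s_1)\alpha_1(s_2)a_2(f_h),\gamma_1\rangle$, and the Leibniz rule applied to the vanishing product $\alpha_1(s_1)\alpha_1(s_2)\cdot f_h \in A_4=0$ rewrites this as $a_2\langle \alpha_1(s_1) f_h,\gamma_1\rangle - a_1\langle \alpha_1(s_2) f_h,\gamma_1\rangle$, matching the expected $\phi_h$-coefficient.

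For (\ref{w1,1,ggs}) and (\ref{w1,1,ggg}), the $B_2$-part again vanishes on both sides: applying $\beta_1$ forces $\alpha_1(s_j)$ to be wedged against $\varepsilon_{s_1,s_2,s_3}$, which already contains every $\alpha_1(s_i)$, while on the right side the product $\gamma_u \cdot \gamma_t$ lies entirely in $A_2^*$ by (\ref{eqlambda}). The $A_2^*$-part is the substantive step: by linearity, the $\phi_h$-entry of $a_2^*(\xi)$ collapses to the single pairing $\langle w^{(1)}_{2,1}(a_2(f_h)\wedge \varepsilon_{s_1,s_2,s_3}\otimes \alpha_1(s_j)),\gamma_u\wedge\gamma_t\rangle$ (and analogously using $w^{(1)}_{3,1}$), which must equal the corresponding coefficient of $\beta_3(\gamma_u)(\gamma_t\cdot s_j)-\beta_3(\gamma_t)(\gamma_u\cdot s_j) + a_j(\gamma_u\cdot\gamma_t)$ after expansion via Proposition \ref{w3,1link}. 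Since the required identity is $\prod GL(F_i)$-equivariant in the entries of $a_1,a_2,a_3$, Theorem \ref{equivariantrelations} reduces it to a verification on a generic split exact complex, in the same spirit as the proofs of Lemmas \ref{XY} and \ref{lambdacorrection}, with the explicit computation deferred to Section 5.

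The main obstacle will be (\ref{w1,1,ggg}): the defining relation (\ref{q1,3eq}) of $w^{(1)}_{3,1}$ simultaneously involves $w^{(1)}_{2,1}$, $w^{(3)}_2$, and $w^{(3)}_{3,1}$, so after substituting $a_2(f_h)$ and pairing with $\gamma_1\wedge\gamma_2\wedge\gamma_3$ one produces many terms that must reconcile not only with the $\beta_3$ and $w^{(2)}_2$ contributions (as in the previous case) but also with the $\gamma_u \cdot \phi_h$-product given by (\ref{w2,1gp}) once the cross-bracket with $d_1(s_j)$ is expanded. Organizing this matching explicitly — or equivalently checking the equivariant identity on the split exact complex — is where the bulk of the bookkeeping lies and is the step most likely to require a computer-assisted verification.
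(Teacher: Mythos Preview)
Your proposal is correct and follows essentially the same strategy as the paper: verify each formula by applying $d_3$ to the proposed element, split into the $\phi_h$- and $t_j$-components, handle the $t_j$-part via $\sum_i u_{ij}e_i=\alpha_1(s_j)$ and antisymmetry, and for the $\phi_h$-part invoke Theorem~\ref{equivariantrelations} to reduce to a split-exact verification (the paper's relations (W11,1)--(W11,3) in Lemma~\ref{splitw1,1}).

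The one genuine difference is your treatment of (\ref{w1,1,gss}). The paper reduces the $\phi_h$-coefficient to the identity $\sum_i y_{ih}\langle e_j e_k e_i,\gamma_t\rangle = x_k\langle e_j f_h,\gamma_t\rangle - x_j\langle e_k f_h,\gamma_t\rangle$ and checks it on the split exact complex. You instead collapse $\sum_i y_{ih}\langle \alpha_1(s_1)\alpha_1(s_2)e_i,\gamma_1\rangle$ to $\langle \alpha_1(s_1)\alpha_1(s_2)\cdot a_2(f_h),\gamma_1\rangle$ and rewrite it using the degree-$4$ Leibniz rule on $\A$ (from $\alpha_1(s_1)\alpha_1(s_2)\cdot f_h\in A_4=0$). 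That is legitimate---the degree-$4$ Leibniz identity $a_2(f)\cdot f' + a_2(f')\cdot f = 0$ does hold, since its image under the injective map $a_3$ vanishes---and gives a slightly more elementary argument for this case. For (\ref{w1,1,ggs}) and (\ref{w1,1,ggg}) no such shortcut is available, and your approach then coincides with the paper's, including the deferral to computer-checked identities in Section~5.
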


\begin{proof}
Observe that $d_3(\epsilon_i)= \sum_{k=1}^{r_2} y_{ik} \phi_k + \sum_{j=1}^3 u_{ij} t_j. $ The proof of (\ref{w1,1,sss}) is straightforward. For the other cases we use similar methods but we deal with each of them separately.
All the computations over a split exact complex with defect variables are postponed to Lemma \ref{splitw1,1}. We recall that 
$d_1(s_j)= b_1(s_j) = \sum_{i=1}^{r_1}u_{ij} x_i$ and $d_1(\gamma_t)= \beta_3(\gamma_t) = \langle \alpha_1(s_1)^.\alpha_1(s_2)^.\alpha_1(s_3), \gamma_1 \rangle. $ \\
\bf Case 1: $\gamma_1^.s_1^.s_2$. \rm \\
Apply $d_3$ to the right side term of (\ref{w1,1,gss}). Call $\Theta $ the obtained element. The coefficient of $\phi_h$ in $\Theta$ is $\sum_{i=2}^{r_1} \langle \alpha_1(s_1)^.\alpha_1(s_2)^.e_i, \gamma_1 \rangle y_{ih}. $ Using the formula to compute the multiplication $\bigwedge^3 D_1 \to D_3$ we need to show that this is equal to the coefficient of $\phi_h$ in $ d_1(\gamma_1)s_1^.s_2 - d_1(s_1) \gamma_1^.s_2 + d_1(s_2) \gamma_1^.s_1.  $
By Proposition \ref{w3,1link}, such coefficient is 
$$  - (\sum_{i=1}^{r_1}u_{i1} x_i)\langle \alpha_1(s_2)^.f_h, \gamma_1 \rangle + (\sum_{i=1}^{r_1}u_{i2} x_i) \langle \alpha_1(s_1)^.f_h, \gamma_1 \rangle. $$
Expanding $\alpha_1(s_j)= \sum_{i=1}^{r_1}u_{ij} e_i$, we reduce to checking that the equation 
$$ \sum_{i=1}^{r_1} \langle e_j^.e_k^.e_i, \gamma_t \rangle y_{ih} = x_k \langle e_j^.f_h, \gamma_t \rangle - x_j \langle e_k^.f_h, \gamma_t \rangle  $$
holds for every choice of indices. By Theorem \ref{equivariantrelations} it is sufficient to check this relation over a split exact complex with defect variables. This is done in relation (W11,1) in Lemma \ref{splitw1,1}. 

The coefficient of $t_j$ in $\Theta$ is $\sum_{i=1}^{r_1} \langle \alpha_1(s_1)^.\alpha_1(s_2)^.e_i, \gamma_1 \rangle u_{ij}$. Using the relation $\sum_{i=1}^{r_1} u_{ij} e_i = \alpha_1(s_j)$, we get that this coefficient is zero if $j=1,2$, while it is equal to $\langle \alpha_1(s_1)^.\alpha_1(s_2)^.\alpha_1(s_3), \gamma_1 \rangle = \beta_3(\gamma_1)$ if $j=3$. Relation $s_1^.s_2= t_3$ implies the thesis. \\
\bf Case 2: $\gamma_1^.\gamma_2^.s_1$. \rm \\
Now call $\Theta $ the image of the right side of (\ref{w1,1,ggs}) after applying $d_3$. We first show that the coefficient of $t_j$ in $\Theta$ is zero for each $j=1,2,3$. Indeed such coefficient is $\sum_{i=1}^{r_1} \langle w_{2,1}^{(1)}(\varepsilon_{i,s_1,s_2, s_3} \otimes \alpha_1(s_1)), \gamma_1\wedge \gamma_2 \rangle u_{ij}$ which is equal to 
$$  \langle w_{2,1}^{(1)}(\alpha_1(s_j) \wedge \alpha_1(s_1) \wedge \alpha_1(s_2) \wedge \alpha_1(s_3) \otimes \alpha_1(s_1)), \gamma_1\wedge \gamma_2 \rangle = 0. $$ 
The coefficient of $\phi_h$ in $\Theta$ is $\sum_{i=1}^{r_1} \langle w_{2,1}^{(1)}(\varepsilon_{i,s_1,s_2, s_3} \otimes \alpha_1(s_1)), \gamma_1\wedge \gamma_2 \rangle y_{ih}$. We have to show that this is equal to the coefficient of $\phi_h$ in $ d_1(\gamma_1)\gamma_2^.s_1 - d_1(\gamma_2)\gamma_1^.s_1 +  d_1(s_1)\gamma_1^.\gamma_2.  $ By Proposition \ref{w3,1link}, this coefficient is 
$$   (\sum_{i=1}^{r_1}u_{i1} x_i)\langle w^{(2)}_2(\alpha_1(s_1),\alpha_1(s_2),\alpha_1(s_3) \otimes f_h), \gamma_1 \wedge \gamma_2 \rangle+ $$ $$ - \langle \alpha_1(s_1)^.\alpha_1(s_2)^.\alpha_1(s_3), \gamma_2  \rangle \cdot \langle \alpha_1(s_1)^.f_h, \gamma_1 \rangle + \langle \alpha_1(s_1)^.\alpha_1(s_2)^.\alpha_1(s_3), \gamma_1  \rangle \cdot \langle \alpha_1(s_1)^.f_h, \gamma_2 \rangle. $$
 By computation with the split exact complex, these two coefficients agree as consequence of relation (W11,2) in Lemma \ref{splitw1,1}. \\ 
\bf Case 3: $\gamma_1^.\gamma_2^.\gamma_3$. \rm \\
As in the other two cases call $\Theta $ the image of the right side of (\ref{w1,1,ggg}) after applying $d_3$. The coefficient of $t_j$ in $\Theta$ can be shown to be zero for every $j=1,2,3$ as in Case 2. As before we need to compare the coefficient of $\phi_h$ in $\Theta$ and in $ d_1(\gamma_1)\gamma_2^.\gamma_3 - d_1(\gamma_2)\gamma_1^.\gamma_3 +  d_1(\gamma_3)\gamma_1^.\gamma_2.  $ This consists of checking that $\sum_{i=1}^{r_1} \langle w_{3,1}^{(1)}(\varepsilon_{i,s_1,s_2, s_3} \otimes \varepsilon_{s_1,s_2, s_3}), \wedge_{t=1}^3 \gamma_t \rangle y_{ih}$ is equal to 
$$ \langle \alpha_1(s_1)^.\alpha_1(s_2)^.\alpha_1(s_3), \gamma_1  \rangle \cdot \langle w^{(2)}_2(\alpha_1(s_1),\alpha_1(s_2),\alpha_1(s_3) \otimes f_h), \gamma_2 \wedge \gamma_3 \rangle + $$
$$ -\langle \alpha_1(s_1)^.\alpha_1(s_2)^.\alpha_1(s_3), \gamma_2  \rangle \cdot \langle w^{(2)}_2(\alpha_1(s_1),\alpha_1(s_2),\alpha_1(s_3) \otimes f_h), \gamma_1 \wedge \gamma_3 \rangle + $$ 
$$ +\langle \alpha_1(s_1)^.\alpha_1(s_2)^.\alpha_1(s_3), \gamma_3  \rangle \cdot \langle w^{(2)}_2(\alpha_1(s_1),\alpha_1(s_2),\alpha_1(s_3) \otimes f_h), \gamma_1 \wedge \gamma_2 \rangle. $$
The thesis now follows by relation (W11,3) in Lemma \ref{splitw1,1}.
\end{proof}

\subsection{Higher structure maps in the second graded components}

We deal now with the maps from the second graded components of the critical representations.
The next map we consider is $w^{(3)}_2: \bigwedge^4 D_1 \to D_2 \otimes D_3 $. Recall that this map is computed by lifting the image of the map $q^{(3)}_2$, which is defined on four elements $e_1,e_2,e_3,e_4$ as $e_1^.e_2 \otimes e_3^.e_4 - e_1^.e_3 \otimes e_2^.e_4 + e_1^.e_4 \otimes e_2^.e_3$.

\begin{thm}
\label{w3,2link}
The map $w^{(3)}_2$ on the complex $\DD$ is computed as follows:
\begin{equation}
\label{w3,2,gsss}
w^{(3)}_2(\gamma_1, s_1, s_2, s_3) = \sum_{i,h} \langle e_i^.f_h,  \gamma_1 \rangle (\epsilon_i \otimes \phi_h). 
\end{equation}
\begin{equation}
\label{w3,2,ggss}
w^{(3)}_2(\gamma_1, \gamma_2, s_1, s_2) = \sum_{i,h} \langle w^{(2)}_2(e_i, \alpha_1(s_1), \alpha_1(s_2) \otimes f_h),  \gamma_1 \wedge \gamma_2 \rangle (\epsilon_i \otimes \phi_h).
\end{equation}
\begin{equation}
\label{w3,2,gggs}
w^{(3)}_2(\gamma_1, \gamma_2, \gamma_3, s_1) = \sum_{i,h} \langle w^{(2)}_{3,1}(\varepsilon_{i,s_1,s_2,s_3} \otimes \alpha_1(s_1) \otimes f_h), \wedge_{u=1}^3 \gamma_u \rangle (\epsilon_i \otimes \phi_h). 
\end{equation}
\begin{equation}
\label{w3,2,gggg}
w^{(3)}_2(\gamma_1, \gamma_2, \gamma_3, \gamma_4) = \sum_{i,h} \langle w^{(2)}_{4,1}(\varepsilon_{i,s_1,s_2,s_3} \otimes \varepsilon_{s_1,s_2, s_3} \otimes f_h), \wedge_{u=1}^4 \gamma_u \rangle (\epsilon_i \otimes \phi_h). 
\end{equation}
The same formulas hold for all the possible combinations of basis elements $\gamma_u$ and $s_j$.
\end{thm}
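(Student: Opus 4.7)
The strategy parallels that of Theorem \ref{w1,1link}. Since $w^{(3)}_2$ is unique only modulo $\bigwedge^2 D_3$, for each of the four formulas it suffices to show that the image of the proposed expression in $S_2 D_2$ under the map $F_3\otimes F_2 \to S_2 F_2$ (induced by $d_3$ and symmetrization) equals $q^{(3)}_2$ evaluated on the corresponding quadruple and expanded via the multiplicative structure of Proposition \ref{w3,1link}. Using $d_3(\epsilon_i) = \sum_k y_{ik}\phi_k + \sum_j u_{ij} t_j$, I decompose $S_2 D_2 = S_2 A_2^* \oplus (A_2^*\otimes B_2) \oplus S_2 B_2$ and match each component separately. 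The $S_2 B_2$ component is zero on both sides: every proposed lift carries a factor $\phi_h\in A_2^*$, and $q^{(3)}_2$ cannot produce a term in $S_2 B_2$ since at most three of the four inputs lie in $B_1$.

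For (\ref{w3,2,gsss}) the quadruple $(\gamma_1,s_1,s_2,s_3)$ puts $q^{(3)}_2$ entirely in $A_2^*\otimes B_2$, and the identity $\sum_i u_{ij}e_i = \alpha_1(s_j)$ collapses the $A_2^*\otimes B_2$ contribution of the lift to $\sum_j(\gamma_1\cdot s_j)\otimes t_j$, matching Proposition \ref{w3,1link}. The $S_2 A_2^*$ part of the lift vanishes, since the symmetric part of $\sum_i y_{ik}\langle e_i\cdot f_h,\gamma_1\rangle$ in $(h,k)$ equals $\langle a_2(f_k)\cdot f_h + a_2(f_h)\cdot f_k,\gamma_1\rangle$, which is zero by the DGA relation $d_3(a_2(f)\cdot f') = -a_2(f)\cdot a_2(f')$ combined with injectivity of $a_3$.

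For (\ref{w3,2,ggss}), the same collapse together with antisymmetry of $w^{(2)}_2$ in its first three arguments (killing the $j=1,2$ contributions) produces $(\gamma_1\cdot\gamma_2)\otimes t_3$ in the $A_2^*\otimes B_2$ slot, matching the $\gamma_1\gamma_2\otimes s_1s_2$ piece of $q^{(3)}_2$. The $S_2 A_2^*$ components must then match the remaining terms $-\gamma_1 s_1\otimes\gamma_2 s_2 + \gamma_1 s_2\otimes\gamma_2 s_1$ through a $\prod_i\GL(F_i)$-equivariant identity relating $w^{(2)}_2$ to the multiplicative structure; by Theorem \ref{equivariantrelations} it is verified on the split exact complex in Section 5.

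Cases (\ref{w3,2,gggs}) and (\ref{w3,2,gggg}) have $q^{(3)}_2$ lying entirely in $S_2 A_2^*$. The $A_2^*\otimes B_2$ part of the lift vanishes identically, because substituting $\alpha_1(s_j)$ into any of the first four antisymmetrized slots of $w^{(2)}_{3,1}$ or $w^{(2)}_{4,1}$ always forces a repeated argument with one of $\alpha_1(s_1),\alpha_1(s_2),\alpha_1(s_3)$. The matching of the $S_2 A_2^*$ pieces is precisely what the defining relations (\ref{q2,3eq}) and (\ref{q2,4eq}) of $w^{(2)}_{3,1}$ and $w^{(2)}_{4,1}$ encode, once contracted against the appropriate $\gamma_u$'s; these are equivariant and, by Theorem \ref{equivariantrelations}, reduce again to split-exact computations in Section 5. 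The main obstacle is formula (\ref{w3,2,gggg}): the identity intertwines $w^{(2)}_{4,1}$ with $w^{(1)}_{3,1}$, $w^{(3)}_{3,1}$, and $w^{(3)}_{4,1}$, and the sign and coefficient bookkeeping becomes unwieldy by hand, so—as with the heavier identities of Section 5—it is completed by computer.
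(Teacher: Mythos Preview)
Your proposal is correct and follows essentially the same route as the paper: define the candidate lift $\Phi$, push it through $\widehat{d_3}$, and match the result against $q^{(3)}_2$ coefficient by coefficient, decomposing $S_2 D_2$ according to the $A_2^*$/$B_2$ splitting and deferring the $S_2A_2^*$ identities to the split-exact verifications of Section~5 via Theorem~\ref{equivariantrelations}.

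One small difference worth flagging: for the $S_2A_2^*$ component in Case~(\ref{w3,2,gsss}) you give a direct DGA argument (using $a_3(a_2(f_k)\cdot f_h + a_2(f_h)\cdot f_k)=0$ and injectivity of $a_3$) rather than invoking the split computation (W32,1). That is cleaner. On the other hand, your description of Cases~(\ref{w3,2,gggs}) and~(\ref{w3,2,gggg}) is slightly misleading: you say the matching of the $S_2A_2^*$ pieces is ``precisely what the defining relations (\ref{q2,3eq}) and (\ref{q2,4eq}) encode,'' but the identities actually needed are (W32,3) and (W32,4), which involve only $w^{(2)}_2$ and $w^{(2)}_1$ on the right-hand side (from expanding $q^{(3)}_2$ via Proposition~\ref{w3,1link}), not $w^{(1)}_{3,1}$, $w^{(3)}_{3,1}$, $w^{(3)}_{4,1}$. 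The defining relations of $w^{(2)}_{3,1}$ and $w^{(2)}_{4,1}$ are what make those identities \emph{true}, but the statement to verify is simpler than you suggest. Since you correctly reduce to Section~5 either way, this is a point of exposition rather than a gap.
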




\begin{proof}
We follow the same method as in the proof of Theorem \ref{w1,1link}.
 Let $\widehat{d_3}: D_3 \otimes D_2 \to S_2D_2$ be the map induced by $d_3$ and let $q^{(3)}_2: \bigwedge^4 D_1 \to S_2D_2 $ be defined as above.
Let $\Phi: \bigwedge^4 D_1 \to D_2 \otimes D_3 $ be the linear map defined for each choice of basis elements by taking the opportune right side term of equations (\ref{w3,2,gsss}), (\ref{w3,2,ggss}), (\ref{w3,2,gggs}), (\ref{w3,2,gggg}).
We have to show $\widehat{d_3} \cdot \Phi= q^{(3)}_2$. 
Recall that $d_3(\epsilon_i)= \sum_{k=1}^{r_2} y_{ik} \phi_k + \sum_{j=1}^3 u_{ij} t_j. $ We describe each case separately. For each of them, we use Theorem \ref{equivariantrelations} to reduce the case of a split exact complex with defect variables. The computations over a split exact complex are postponed to Lemma \ref{splitw3,2}. \\
\bf Case 1: $w^{(3)}_2(\gamma_1, s_1, s_2, s_3)$. \rm \\
The coefficient of $(\phi_h \cdot \phi_k)$ in $\widehat{d_3}(\Phi(\gamma_1, s_1, s_2, s_3))$ is equal to 
$ \sum_{i=1}^{r_1} \langle e_i^.f_h,  \gamma_1 \rangle y_{ik} + \langle e_i^.f_k,  \gamma_1 \rangle y_{ih}.  $  This coefficient is zero since so it is over a split exact complex with defect variables (see relation (W32,1) in Lemma \ref{splitw3,2}).
Since $\alpha_1(s_j)= \sum_i u_{ij}e_i$,  the coefficient of $(\phi_h \cdot t_j)$ is 
$ \sum_i \langle e_i^.f_h,  \gamma_1 \rangle u_{ij} = \langle \alpha_1(s_j)^.f_h,  \gamma_1 \rangle. $ Hence, by Proposition \ref{w3,1link} 
$$ \widehat{d_3}(\Phi(\gamma_1, s_1, s_2, s_3)) = \sum_{h=1}^{r_2} \sum_{j=1}^3 \langle \alpha_1(s_j)^.f_h,  \gamma_1 \rangle (\phi_h \otimes t_j)= q^{(3)}_2(\gamma_1, s_1, s_2, s_3). $$ \\ 
\bf Case 2: $w^{(3)}_2(\gamma_1, \gamma_2, s_1, s_2)$. \rm \\
In this case, the coefficient of $(\phi_h \cdot \phi_k)$ in $\widehat{d_3}(\Phi)$ is
$$ \sum_{i=1}^{r_1} \langle w^{(2)}_2(e_i, \alpha_1(s_1), \alpha_1(s_2) \otimes f_h),  \gamma_1 \wedge \gamma_2 \rangle y_{ik} + \langle w^{(2)}_2(e_i, \alpha_1(s_1), \alpha_1(s_2) \otimes f_k),  \gamma_1 \wedge \gamma_2 \rangle y_{ih}.  $$
By computation on the split exact complex (see relation (W32,2) in Lemma \ref{splitw3,2}), expanding with respect the opportune minors of $\alpha_1$, we obtain that this coefficient is equal to
$$  \langle \alpha_1(s_1)^.f_h,  \gamma_2 \rangle \cdot  \langle \alpha_1(s_2)^.f_k,  \gamma_1 \rangle -  \langle \alpha_1(s_1)^.f_h,  \gamma_1 \rangle \cdot  \langle \alpha_1(s_2)^.f_k,  \gamma_2 \rangle + $$
$$ + \langle \alpha_1(s_1)^.f_k,  \gamma_2 \rangle \cdot  \langle \alpha_1(s_2)^.f_h,  \gamma_1 \rangle -  \langle \alpha_1(s_1)^.f_k,  \gamma_1 \rangle \cdot  \langle \alpha_1(s_2)^.f_h,  \gamma_2 \rangle. $$
The coefficient of $(\phi_h \cdot t_j)$ is equal to 
$$  \sum_i \langle w^{(2)}_2(e_i, \alpha_1(s_1), \alpha_1(s_2) \otimes f_h),  \gamma_1 \wedge \gamma_2 \rangle u_{ij} = \langle w^{(2)}_2(\alpha_1(s_j), \alpha_1(s_1), \alpha_1(s_2) \otimes f_h),  \gamma_1 \wedge \gamma_2 \rangle.  $$
This term is zero for $j=1,2$ and it is equal to 
$ \langle w^{(2)}_2(\alpha_1(s_1), \alpha_1(s_2), \alpha_1(s_3)  \otimes f_h),  \gamma_1 \wedge \gamma_2 \rangle$ if $j=3$. Computing $q^{(3)}_2(\gamma_1, \gamma_2, s_1, s_2)$ using Proposition \ref{w3,1link} we get the desired equality. \\
\bf Case 3: $w^{(3)}_2(\gamma_1, \gamma_2, \gamma_3, s_1)$. \rm \\
The coefficient of $(\phi_h \cdot \phi_k)$ in $\widehat{d_3}(\Phi)$ is
$$ \sum_{i=1}^{r_1} \langle w^{(2)}_{3,1}(\varepsilon_{i,s_1,s_2,s_3} \otimes \alpha_1(s_1) \otimes f_h), \wedge_{u=1}^3 \gamma_u \rangle y_{ik} + \langle w^{(2)}_{3,1}(\varepsilon_{i,s_1,s_2,s_3} \otimes \alpha_1(s_1) \otimes f_k), \wedge_{u=1}^3 \gamma_u \rangle y_{ih}.  $$
Set $\vartheta_{ut}^h:= \langle w^{(2)}_2(\alpha_1(s_1), \alpha_1(s_2), \alpha_1(s_3) \otimes f_h),  \gamma_u \wedge \gamma_t \rangle$.
By relation (W32,3) in Lemma \ref{splitw3,2} 
this coefficient is equal to
$$ \vartheta_{12}^h \cdot \langle \alpha_1(s_1)^.f_k,  \gamma_3 \rangle + \vartheta_{12}^k \cdot \langle \alpha_1(s_1)^.f_h,  \gamma_3 \rangle - \vartheta_{13}^h \cdot \langle \alpha_1(s_1)^.f_k,  \gamma_2 \rangle - \vartheta_{13}^k \cdot \langle \alpha_1(s_1)^.f_h,  \gamma_2 \rangle + $$  $$ + \vartheta_{23}^h \cdot \langle \alpha_1(s_1)^.f_k,  \gamma_1 \rangle + \vartheta_{23}^k \cdot \langle \alpha_1(s_1)^.f_h,  \gamma_1 \rangle.  $$
The coefficient of $(\phi_h \cdot t_j)$ is equal to 
$$  \sum_i \langle w^{(2)}_{3,1}(\varepsilon_{i,s_1,s_2,s_3} \otimes \alpha_1(s_1) \otimes f_h), \wedge_{u=1}^3 \gamma_u \rangle u_{ij} = \langle w^{(2)}_{3,1}(\varepsilon_{s_1,s_1,s_2,s_3} \otimes \alpha_1(s_1) \otimes f_h), \wedge_{u=1}^3 \gamma_u \rangle = 0.  $$ Again the thesis follows computing $q^{(3)}_2(\gamma_1, \gamma_2, \gamma_3, s_1)$ using Proposition \ref{w3,1link}.  \\
\bf Case 4: $w^{(3)}_2(\gamma_1, \gamma_2, \gamma_3, \gamma_4)$. \rm \\
Similarly to the previous cases the coefficient of 
$(\phi_h \cdot \phi_k)$ in $\widehat{d_3}(\Phi)$ is
$$ \sum_{i=1}^{r_1} \langle w^{(2)}_{4,1}(\varepsilon_{i,s_1,s_2,s_3} \otimes \varepsilon_{s_1,s_2, s_3} \otimes f_h), \wedge_{u=1}^4 \gamma_u \rangle y_{ik} + \langle w^{(2)}_{4,1}(\varepsilon_{i,s_1,s_2,s_3} \otimes \varepsilon_{s_1,s_2, s_3} \otimes f_k), \wedge_{u=1}^4 \gamma_u \rangle y_{ih}.  $$ By relation (W32,4) in Lemma \ref{splitw3,2} 
this coefficient is equal to
$$  \vartheta^h_{12} \cdot \vartheta^k_{34} - \vartheta^h_{13} \cdot \vartheta^k_{24} + \vartheta^h_{14} \cdot \vartheta^k_{23} + \vartheta^h_{23} \cdot \vartheta^k_{14} - \vartheta^h_{24} \cdot \vartheta^k_{13} + \vartheta^h_{34} \cdot \vartheta^k_{12}.   $$
The coefficient of $(\phi_h \cdot t_j)$ is equal to zero for the same reason as in Case 3.
Therefore $\widehat{d_3} \cdot \Phi= q^{(3)}_2(\gamma_1, \gamma_2, \gamma_3, \gamma_4).$
\end{proof}

\medskip

We conclude this section with the description of the map $w^{(2)}_2$. Recall that this map is computed by lifting the image of the map $q^{(2)}_2$, which is defined on elements $e_1,e_2,e_3, f_h $ as $ e_1^.e_2^.e_3 \otimes f_h +  e_1^.e_2 \otimes e_3^.f_h - e_1^.e_3 \otimes e_2^.f_h + e_2^.e_3 \otimes e_1^.f_h - w^{(3)}_2(e_1,e_2,e_3, d_2(f_h))$.

\begin{thm}
\label{w2,2link}
The map $w^{(2)}_2$ on the complex $\DD$ is computed as follows:
\begin{equation}
\label{w2,2,ssst}
w^{(2)}_2(s_1,s_2,s_3 \otimes t_j) =  0.
\end{equation}
\begin{equation}
\label{w2,2,sssp}
w^{(2)}_2(s_1,s_2,s_3 \otimes \phi_h) = \sum_{i,k} \langle e_i^.e_k, \phi_h \rangle (\epsilon_i \wedge \epsilon_k).
\end{equation}
\begin{equation}
\label{w2,2,gsst}
w^{(2)}_2(\gamma_1, s_1, s_2 \otimes t_j) = \sum_{i,k} (1- \delta_{j3}) \langle \alpha_1(s_{3-j})^.e_i^.e_k, \gamma_1 \rangle (\epsilon_i \wedge \epsilon_k). \ec
\end{equation}
\begin{equation}
\label{w2,2,gssp}
w^{(2)}_2(\gamma_1, s_1, s_2 \otimes \phi_h) = \sum_{i,k} \langle w^{(3)}_2(\varepsilon_{i,k,s_1,s_2}), \phi_h \otimes \gamma_1 \rangle (\epsilon_i \wedge \epsilon_k).
\end{equation}
\begin{equation}
\label{w2,2,ggst}
w^{(2)}_2(\gamma_1, \gamma_2, s_1 \otimes t_j) =   \sum_{i,k} \langle w^{(1)}_{2,1}(\varepsilon_{i,k, s_{k_1},s_{k_2}} \otimes \alpha_1(s_1)), \gamma_1 \wedge \gamma_2 \rangle (\epsilon_i \wedge \epsilon_k), \mbox{ with } k_1,k_2 \neq j.  \ec
\end{equation}
\begin{equation}
\label{w2,2,ggsp}
w^{(2)}_2(\gamma_1, \gamma_2, s_1 \otimes \phi_h) =  \sum_{i,k} \langle w^{(3)}_{3,1}( \varepsilon_{i,k,s_1,s_2,s_3} \otimes \alpha_1(s_1)), \phi_h \otimes \gamma_1 \wedge \gamma_2 \rangle (\epsilon_i \wedge \epsilon_k).  \ec
\end{equation}
\begin{equation}
\label{w2,2,gggt}
w^{(2)}_2(\gamma_1, \gamma_2, \gamma_3 \otimes t_1) = \sum_{i,k} \langle w^{(1)}_{3,1}(\varepsilon_{i,k, s_2,s_3} \otimes \varepsilon_{s_1,s_2, s_3}, \wedge_{u=1}^3 \gamma_u \rangle (\epsilon_i \wedge \epsilon_k).  \ec
\end{equation}
\begin{equation}
\label{w2,2,gggp}
w^{(2)}_2(\gamma_1, \gamma_2, \gamma_3 \otimes \phi_h) =  \sum_{i,k} \langle w^{(3)}_{4,1}(\varepsilon_{i,k,s_1,s_2,s_3} \otimes \varepsilon_{s_1,s_2, s_3}), \phi_h \otimes \wedge_{u=1}^3 \gamma_u \rangle (\epsilon_i \wedge \epsilon_k).  \ec
\end{equation}
The same formulas hold for all the possible combinations of basis elements $\gamma_u$, $s_k$ $\phi_h$, and $t_j$.
\end{thm}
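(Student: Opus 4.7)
The plan is to mirror exactly the argument of Theorem \ref{w3,2link}. Define a $K$-linear map $\Phi:\bigwedge^3 D_1\otimes D_2\to\bigwedge^2 D_3$ by taking the right-hand sides of (\ref{w2,2,ssst})--(\ref{w2,2,gggp}) as the values of $\Phi$ on each combination of basis elements, and extend by linearity. Since $\bigwedge^2 D_3=\bigwedge^2 A_1^*$ and the map $\widehat{d_3}:\bigwedge^2 D_3\to D_3\otimes D_2$ induced by $d_3$ is injective on the acyclic complex $\DD$, the lifting defining $w^{(2)}_2$ is unique, and it suffices to prove the identity $\widehat{d_3}\circ\Phi=q^{(2)}_2$, where
$$\widehat{d_3}(\epsilon_i\wedge\epsilon_k)=\epsilon_i\otimes d_3(\epsilon_k)-\epsilon_k\otimes d_3(\epsilon_i),\qquad d_3(\epsilon_i)=\sum_h y_{ih}\phi_h+\sum_j u_{ij}t_j.$$

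For each of the eight cases I would extract, on both sides, the coefficients with respect to the two families of basis vectors $\epsilon_i\otimes\phi_h$ and $\epsilon_i\otimes t_j$ of $D_3\otimes D_2$. On the $\Phi$-side this produces symmetric sums of the form $\langle M,\cdot\rangle y_{ih}$ and $\langle M,\cdot\rangle u_{ij}$ where $M$ is the relevant higher structure map on $\A$. On the $q^{(2)}_2$-side, every occurrence of a multiplication on $\DD$ is expanded via Proposition \ref{w3,1link} and Theorem \ref{w1,1link}, while the correction term $-w^{(3)}_2(\cdot,\cdot,\cdot,d_2(\cdot))$ is expanded via Theorem \ref{w3,2link}. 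Matching the two expressions reduces each case to a $\prod_i\GL(A_i)$-equivariant polynomial identity in the entries of $a_1,a_2,a_3$ and the structure maps of $\A$. By Theorem \ref{equivariantrelations} it then suffices to verify each such identity on a split exact complex with generic defect-variable lifts; these verifications would be collected in a Section 5 lemma paralleling Lemmas \ref{split1}, \ref{split2}, \ref{splitw1,1}, \ref{splitw3,2}.

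The four ``$t_j$--cases'' (\ref{w2,2,ssst}), (\ref{w2,2,gsst}), (\ref{w2,2,ggst}), (\ref{w2,2,gggt}) are handled by the same bookkeeping, using $\alpha_2(t_j)=(-1)^{j+1}\alpha_1(s_{k_1})^.\alpha_1(s_{k_2})$ and the formulas for $s_k^.t_j$ and $\gamma_u^.t_j$ to rewrite every product appearing in $q^{(2)}_2$. Case (\ref{w2,2,ssst}) in particular reduces to the assertion that all summands of $q^{(2)}_2(s_1,s_2,s_3\otimes t_j)$ collapse inside $D_3\otimes D_2$; each term, once rewritten via Proposition \ref{w3,1link} and the relation $s_1^.s_2^.s_3=w=\sum_i x_i\epsilon_i$, cancels against $-w^{(3)}_2(s_1,s_2,s_3,d_2(t_j))$ computed from (\ref{w3,2,gsss}), with the outstanding terms vanishing by the Koszul identity $\sum_i u_{ij}x_i - \sum_i u_{ij}x_i=0$.

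The main obstacle will be the last two cases (\ref{w2,2,gggt}) and (\ref{w2,2,gggp}), where the formulas already involve the third-graded maps $w^{(1)}_{3,1}$ and $w^{(3)}_{4,1}$ on $\A$. The $q^{(2)}_2$-side here unfolds into a long alternating sum of products of $w^{(3)}_2$, $w^{(2)}_2$ and the multiplications, and the required split-complex identity is genuinely large and will need computer algebra, as is already the case in the analogous step of Theorem \ref{w3,2link}. A secondary technical point is that the maps $w^{(3)}_{3,1}$, $w^{(1)}_{2,1}$, $w^{(1)}_{3,1}$, $w^{(3)}_{4,1}$, $w^{(2)}_{3,1}$, $w^{(2)}_{4,1}$ are each only defined up to a lift, so the equivariant identities in Section 5 must be verified with a single coherent choice of all these lifts (the same choice used throughout Sections 2 and 3); this is precisely the setting in which the defect-variable computations of Section 5 are carried out, so no additional flexibility is needed beyond what has already been used for Theorems \ref{w1,1link} and \ref{w3,2link}.
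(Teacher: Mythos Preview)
Your proposal is correct and follows essentially the same route as the paper: define $\Phi$ by the right-hand sides, verify $\widehat{d_3}\circ\Phi=q^{(2)}_2$ by matching coefficients of $\epsilon_i\otimes\phi_\rho$ and $\epsilon_i\otimes t_j$, expand $q^{(2)}_2$ via Proposition~\ref{w3,1link}, Theorem~\ref{w1,1link}, and Theorem~\ref{w3,2link}, and reduce the resulting equivariant identities to the split exact complex through Theorem~\ref{equivariantrelations}; the paper records these split-complex checks as Lemma~\ref{splitw2,2} (relations (W22,5) and (W22,7)), exactly the analogue you anticipate. The only cosmetic difference is that for (\ref{w2,2,ssst}) the paper disposes of the case in one line by observing that $s_1,s_2,s_3,t_j$ all live in the Koszul subcomplex $\BB$, on which $w^{(2)}_2$ vanishes, rather than expanding $q^{(2)}_2$ explicitly.
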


\begin{proof}
For (\ref{w2,2,ssst}) simply observe that this map is always zero over a Koszul complex of length 3. To prove the other cases we adopt the procedure used in Theorems \ref{w1,1link} and \ref{w3,2link}. 
 Let $\widehat{d_3}: \bigwedge^2 D_3 \to D_2 \otimes D_3$ be the map defined by sending $\epsilon \wedge \epsilon' \mapsto d_3(\epsilon) \otimes \epsilon' - d_3(\epsilon') \otimes \epsilon $. Let $q^{(2)}_2: \bigwedge^3 D_1 \otimes D_2 \to D_2 \otimes D_3 $ be defined as above.
 
Let $\Phi: \bigwedge^3 D_1 \otimes  D_2 \to \bigwedge^2 D_3 $ be the linear map defined for each choice of basis elements by taking the opportune right side term of equations (\ref{w2,2,sssp})-(\ref{w2,2,gggp}).
We have to show $\widehat{d_3} \cdot \Phi= q^{(2)}_2$. 
Notice that $$\widehat{d_3}(\epsilon_i \wedge \epsilon_k) = \sum_{\rho} y_{i \rho} (\epsilon_k \otimes \phi_{\rho}) - y_{k \rho} (\epsilon_i \otimes \phi_{\rho}) + \sum_{j=1}^3 u_{ij} (\epsilon_k \otimes t_j) -u_{kj} (\epsilon_i \otimes t_j).$$
We describe each case separately. Again, the required computations over a split exact complex with defect variables are postponed to Lemma \ref{splitw2,2}. \\
\bf Case 1: $w^{(2)}_2(s_1,s_2,s_3 \otimes \phi_h)$. \rm \\
The coefficient of $(\epsilon_i \otimes \phi_{\rho})$ in $\widehat{d_3}(\Phi)$ is $\sum_{k=1}^{r_1}  y_{k\rho}  
\langle  e_k^.e_i, \phi_h \rangle $. To compute $q^{(2)}_2$ using all the previous results in this section, we recall that $$ w^{(3)}_2(s_1,s_2,s_3, d_2(\phi_h)) = w^{(3)}_2(s_1,s_2,s_3, a_3^*(\phi_h)- \beta_2(\phi_h)) = \sum_{u=1}^{r_3} z_{hu} w^{(3)}_2(s_1,s_2,s_3, \gamma_u)  - 0. $$
Hence, the coefficient of $(\epsilon_i \otimes \phi_{\rho})$ in $q^{(2)}_2(s_1,s_2,s_3 \otimes \phi_h)$ is 
$ \delta_{\rho h} x_i - \sum_{u=1}^{r_3} z_{hu} \langle e_i f_{\rho}, \gamma_u \rangle.  $ Equality follows now by (\ref{eqw21,1}) in Remark \ref{remarkw2,1}.

Similarly, the coefficient of $(\epsilon_i \otimes t_j)$ in $\widehat{d_3}(\Phi)$ is $\sum_{k=1}^{r_1}  u_{kj}
 \langle  e_k^.e_i, \phi_h \rangle = \langle  \alpha_1(s_j)^.e_i, \phi_h \rangle $. This coincides with the coefficient of $(\epsilon_i \otimes t_j)$ in $q^{(2)}_2(s_1,s_2,s_3 \otimes \phi_h)$, that is equal to the coefficient of $\epsilon_i$ in 
 $s_j^. \phi_h$. \\
\bf Case 2: $w^{(2)}_2(\gamma_1, s_1, s_2 \otimes t_j)$. \rm \\
In this case $ q^{(2)}_2 = \gamma_1^.s_1^.s_2 \otimes t_j - \gamma_1^.s_1 \otimes s_2^.t_j + \gamma_1^.s_2 \otimes s_1^.t_j - t_3 \otimes \gamma_1^.t_j + w^{(3)}_2(\gamma_1, s_1,s_2, b_2(t_j)). $ 
If $j=3$, $b_2(t_3) = d_1(\alpha_1(s_1))s_2 - d_1(\alpha_1(s_2))s_1=   \sum_{k=1}^{r_1} x_k (u_{k2}s_1 - u_{k1}s_2).  $ Thus, one can check that $q^{(2)}_2=0$.

In the case $j \neq 3$, let us assume $j=1$ (the case $j=2$ is analogous). The coefficient of $(\epsilon_i \otimes t_p)$ in
$\widehat{d_3}(\Phi)$ is $\sum_{k=1}^{r_1}  u_{kp} \langle  \alpha_1(s_{2})^.e_i^.e_k, \gamma_1 \rangle = \langle  \alpha_1(s_p)^.\alpha_1(s_{2})^.e_i, \gamma_1 \rangle. $ This is zero if $p=2$, it is $ \langle \gamma_1^.s_1^.s_2, \epsilon_i \rangle $ if $p=1$, and is $ \langle \gamma_1^.s_2^.s_3, \epsilon_i \rangle $ if $p=3$. Therefore, in any case it coincides with the coefficient of $(\epsilon_i \otimes t_p)$ in $q^{(2)}_2$.

Using the equation $s_1^.t_1 = \sum_{i=1}^{r_1} x_i \epsilon_i$, the coefficient $(\epsilon_i \otimes \phi_{\rho})$ in $ q^{(2)}_2 $ is $$ x_i \langle \alpha_1(s_2)^.f_{\rho}, \gamma_1 \rangle -\sum_{k=1}^{r_1}u_{k2} x_k \langle e_i^.f_{\rho}, \gamma_1 \rangle.   $$
Using the relation $(W11,1)$ in Lemma \ref{splitw1,1}, the coefficient $(\epsilon_i \otimes \phi_{\rho})$ in $\widehat{d_3}(\Phi)$ is $$ \sum_{k=1}^{r_1}  y_{k\rho}  
\langle  \alpha_1(s_2)^.e_i^.e_k, \gamma_1 \rangle = \sum_{l=1}^{r_1} u_{l2} \sum_{k=1}^{r_1}  y_{k\rho} \langle  e_l^.e_i^.e_k, \gamma_1 \rangle =  \sum_{l=1}^{r_1} u_{l2}[ x_i \langle e_l^.f_{\rho}, \gamma_1 \rangle - x_l \langle e_i^.f_{\rho}, \gamma_1 \rangle].  $$ Hence, it coincides with the coefficient in $ q^{(2)}_2 $. \\
\bf Case 3: $w^{(2)}_2(\gamma_1, s_1, s_2 \otimes \phi_h)$. \rm \\ In this case 
$$ w^{(3)}_2(\gamma_1, s_1,s_2, d_2(\phi_h)) = \sum_{u=1}^{r_3} z_{hu} w^{(3)}_2(\gamma_1, s_1,s_2, \gamma_u)  - \langle \alpha_1(s_1)^.\alpha_1(s_2), \phi_h \rangle \cdot w^{(3)}_2(\gamma_1, s_1,s_2, s_3). $$
Hence, the coefficient of $(\epsilon_i \otimes \phi_{\rho})$ in $\widehat{d_3}(\Phi)$ is $\sum_{k=1}^{r_1}  y_{k\rho}  
\langle  w^{(3)}_2(\varepsilon_{i,k,s_1,s_2}), \phi_h \otimes \gamma_1 \rangle $ and in $q^{(2)}_2$ is $$ \delta_{\rho h}
 \langle \alpha_1(s_1)^.\alpha_1(s_2)^.e_i, \gamma_1 \rangle - \sum_{u=1}^{r_3} z_{hu} \langle w^{(2)}_2(e_i,\alpha_1(s_1),\alpha_1(s_2) \otimes f_{\rho}), \gamma_1 \wedge\gamma_u \rangle+   $$
$$+ \langle e_i^.f_{\rho}, \gamma_1 \rangle \cdot \langle \alpha_1(s_1)^.\alpha_1(s_2), \phi_h \rangle+  \langle \alpha_1(s_1)^.f_{\rho}, \gamma_1 \rangle \cdot \langle \alpha_1(s_2)^.e_i, \phi_h \rangle - \langle \alpha_1(s_2)^.f_{\rho}, \gamma_1 \rangle \cdot \langle \alpha_1(s_1)^.e_i, \phi_h \rangle. $$
These coefficients coincide because of equation (\ref{eqw21,2}) in Remark \ref{remarkw2,1}.
It is easy to check that the coefficient of $(\epsilon_i \otimes t_j)$ in $\widehat{d_3}(\Phi)$ and in $q^{(2)}_2$ is zero if $j=1,2$ and it is equal to $\langle w^{(3)}_2(\varepsilon_{i,s_1,s_2,s_3}), \phi_h \otimes \gamma_1 \rangle = \langle \gamma_1^.\phi_h, \epsilon_i \rangle $ for $j=3$. \\
\bf Case 4: $w^{(2)}_2(\gamma_1, \gamma_2, s_1 \otimes t_j)$. \rm \\
The coefficient with respect to $(\epsilon_i \otimes t_p)$ in $q^{(2)}_2$ comes only from the term $\gamma_1^.\gamma_2^.s_1 \otimes t_j$. This is nonzero only for $j=p$. Using Theorem \ref{w1,1link} and the equality $\sum_{k=1}^{r_1}u_{kp}= \alpha_1(s_p)$, we obtain that this is equal to the coefficient of $(\epsilon_i \otimes t_p)$ in $\widehat{d_3}(\Phi)$. Analyzing the coefficient of $(\epsilon_i \otimes \phi_{\rho})$, the desired result follows by relation $(W11,2)$ in Lemma \ref{splitw1,1}. \\ 
\bf Case 5: $w^{(2)}_2(\gamma_1, \gamma_2, s_1 \otimes \phi_h)$. \rm 
Similarly to the previous theorems in this case the coefficient of $(\epsilon_i \otimes t_j)$ in both $\widehat{d_3}(\Phi)$ and $q^{(2)}_2$ is zero. We have 
$$ w^{(3)}_2(\gamma_1, \gamma_2, s_1, d_2(\phi_h)) = \sum_{u=1}^{r_3} z_{hu} w^{(3)}_2(\gamma_1, \gamma_2, s_1, \gamma_u)  - \langle \alpha_1(s_1)^.\alpha_1(s_3), \phi_h \rangle \cdot w^{(3)}_2(\gamma_1, \gamma_2, s_1,s_2) + $$ $$ + \langle \alpha_1(s_1)^.\alpha_1(s_2), \phi_h \rangle \cdot w^{(3)}_2(\gamma_1, \gamma_2, s_1,s_3). $$
Thus, for the coefficient of $(\epsilon_i \otimes \phi_{\rho})$, one has to check over a split exact complex with defect variables that the term
$  \sum_{k=1}^{r_1}  y_{k\rho}  
\langle  w^{(3)}_{3,1}(\varepsilon_{i,k,s_1,s_2,s_3} \otimes \alpha_1(s_1)), \phi_h \otimes \gamma_1\wedge \gamma_2 \rangle $ is equal to the term $$  \delta_{\rho h}
  \langle w_{2,1}^{(1)}(\varepsilon_{i,s_1,s_2, s_3} \otimes \alpha_1(s_1)),  \gamma_1 \wedge \gamma_2 \rangle - \sum_{u=1}^{r_3} z_{hu} \langle w^{(2)}_{3,1}(\varepsilon_{i,s_1,s_2,s_3} \otimes \alpha_1(s_1) \otimes f_{\rho}), \gamma_1 \wedge \gamma_u \wedge \gamma_2 \rangle+   $$
  $$  - \langle \alpha_1(s_1)^.\alpha_1(s_3), \phi_h \rangle \cdot \langle w^{(2)}_2(e_i, \alpha_1(s_1), \alpha_1(s_2)\otimes \phi_{\rho}), \gamma_1 \wedge\gamma_2 \rangle  + $$  $$ +  \langle \alpha_1(s_1)^.\alpha_1(s_2), \phi_h \rangle \cdot \langle w^{(2)}_2(e_i, \alpha_1(s_1), \alpha_1(s_3) \otimes \phi_{\rho}), \gamma_1 \wedge\gamma_2 \rangle + $$
  $$ + \langle  \alpha_1(s_1)^.e_i, \phi_h \rangle \cdot \langle w^{(2)}_2(\alpha_1(s_1), \alpha_1(s_2), \alpha_1(s_3) \otimes \phi_{\rho}) , \gamma_1 \wedge \gamma_2 \rangle  +  $$
$$-  \langle \alpha_1(s_1)^.f_{\rho}, \gamma_1 \rangle \cdot \langle w^{(3)}_2(\varepsilon_{i,s_1,s_2,s_3}, \phi_h \otimes \gamma_2 \rangle + \langle \alpha_1(s_1)^.f_{\rho}, \gamma_2 \rangle \cdot \langle w^{(3)}_2(\varepsilon_{i,s_1,s_2,s_3}, \phi_h \otimes \gamma_1 \rangle.  $$ This is done in relation (W22,5) in Lemma \ref{splitw2,2}. \\
\bf Case 6: $w^{(2)}_2(\gamma_1, \gamma_2, \gamma_3 \otimes t_j)$. \rm \\
The equality condition for the coefficient of $(\epsilon_i \otimes t_p)$ can be done analogously to Case 4.
Analyzing the coefficient of $(\epsilon_i \otimes \phi_{\rho})$, the desired result follows by relation $(W11,3)$ in Lemma \ref{splitw1,1}. \\
\bf Case 7: $w^{(2)}_2(\gamma_1, \gamma_2, \gamma_3 \otimes \phi_h)$. \rm 
Again by similar arguments the coefficient of $(\epsilon_i \otimes t_j)$ is zero in both $\widehat{d_3}(\Phi)$ and $q^{(2)}_2$. In this case one concludes by checking over a split exact complex that 
$\sum_{k=1}^{r_1}  y_{k\rho}  
\langle  w^{(3)}_{4,1}(\varepsilon_{i,k,s_1,s_2,s_3} \otimes \varepsilon_{s_1,s_2, s_3}), \phi_h \otimes \wedge_{u=1}^3 \gamma_u \rangle$ is equal to
$$ \delta_{\rho h} \langle w^{(1)}_{3,1}(\varepsilon_{i,s_1,s_2, s_3} \otimes \varepsilon_{s_1,s_2, s_3} ), \gamma_1 \wedge \gamma_2 \wedge \gamma_3  \rangle -  \sum_{u=1}^{r_3} z_{hu} \langle w^{(2)}_{4,1}(\varepsilon_{i,s_1,s_2,s_3} \otimes \varepsilon_{s_1,s_2, s_3} \otimes f_{\rho}), \wedge_{s=1}^3 \gamma_s \wedge \gamma_u \rangle+ $$
$$ + \sum_{j=1}^3 (-1)^{j+1} \langle \alpha_1(s_{k_1})^.\alpha_1(s_{k_2}), \phi_h   \rangle \cdot \langle w^{(2)}_{3,1}(\varepsilon_{i,s_1,s_2,s_3} \otimes \alpha_1(s_j) \otimes f_{\rho}), \wedge_{u=1}^3 \gamma_u \rangle+  $$
$$ + \sum_{j=1}^3 (-1)^{j+1} \langle w^{(3)}_2(\varepsilon_{i,s_1,s_2,s_3}, \phi_h \otimes \gamma_j \rangle \cdot \langle w^{(2)}_2(\alpha_1(s_1), \alpha_1(s_2), \alpha_1(s_3) \otimes f_{\rho}) , \gamma_{k_1} \wedge \gamma_{k_2} \rangle.  $$
In the above formula again we have $\lbrace k_1,k_2 \rbrace = \lbrace  1,2,3  \rbrace \setminus \lbrace j \rbrace.$
  This follows from relation (W22,7) in Lemma \ref{splitw2,2}.
\end{proof}

\section{Structure maps and licci ideals}

Now we apply the results from the previous section to linkage, and demonstrate how higher structure maps can detect whether the total Betti number of an ideal decreases after some number of links. For Dynkin formats this has nice consequences related to Conjecture \ref{licciconjecture}.


We work over a Gorenstein local ring $R$ with maximal ideal $\mathfrak{m}$ and infinite residue field $K$. As before, we assume $2,3\notin \mf{m}$. Let $I=(x_1, \ldots, x_n)$ be a perfect ideal of height 3 and denote by $\beta(I)$ the sum of Betti numbers of $I$. We denote by $w^{(i)}_{j,k}(I)$ some choice of higher structure maps associated to a minimal free resolution of $I$. 

Given an ideal $J$, minimally linked to $I$, it is well-known that $\beta(J) \leq \beta(I)$. This inequality may be strict, depending on the multiplicative structure of the free resolution of $I$. The ranks of the linear maps $w^{(3)}_1(I)$ and $w^{(2)}_1(I)$ modulo the maximal ideal $\mathfrak{m}$ are fundamental invariants of $I$ playing a role in its linkage properties. It is well-known (see for instance \cite[Equation 1.8]{AKM88}) that if $w^{(3)}_1(I) \otimes K \neq 0$, then it is possible to find a maximal regular sequence $\mathfrak{a} \subseteq I$, such that, setting $J:= (\mathfrak{a}):I$, then $\beta(J) < \beta(I)$. Notice that the rank of the structure maps modulo the maximal ideal $\mathfrak{m}$ does not depend on the particular choice of the lifts.

Similarly, by \cite[Section 3]{CVWlinkage}, if $w^{(2)}_1(I) \otimes K \neq 0$, it is possible to find an ideal $J$, minimally linked to $I$ such that also $w^{(3)}_1(J) \otimes K \neq 0$. Therefore in this case, with at most two links, one can find an ideal $H$ in the same linkage class of $I$, such that $\beta(H) < \beta(I)$.


\subsection{Conditions on the maps $w^{(3)}_2$ and $w^{(2)}_2$}

Now we show that also if either $w^{(3)}_2(I)$ or $ w^{(2)}_2(I) $ is nonzero modulo $\mathfrak{m}$, then there exists an ideal $H$ in the linkage class of $I$, such that $\beta(H) < \beta(I)$. Following the notation of the preceding section, if the regular sequence $\mathfrak{a} = \lbrace x_1, x_2, x_3 \rbrace $, then $\alpha_1(s_j)= e_j$ for $j=1,2,3$.

\begin{thm}
\label{licciW2}
Let $I$ be a perfect ideal of height 3. Suppose that either $w^{(3)}_2(I) $ or $w^{(2)}_2(I) $ is nonzero modulo $\mathfrak{m}$. Then there exists an ideal $H$, in the linkage class of $I$ such that $\beta(H) < \beta(I)$. In particular $H$ can be obtained from $I$ with at most 3 links.
\end{thm}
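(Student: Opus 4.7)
The plan is to reduce the statement to the linkage criteria for the first-graded structure maps recalled at the opening of this section: if some ideal $J$ in the linkage class of $I$ satisfies $w^{(3)}_1(J)\otimes K\neq 0$, one further link drops the total Betti number, while if $w^{(2)}_1(J)\otimes K\neq 0$, at most two further links suffice. The key observation driving the reduction is that the formulas of Proposition \ref{w3,1link} express certain entries of the multiplicative structure on $\mathbb{D}$ directly as entries of $w^{(2)}_2(I)$ (equation \ref{eqlambda}) or of $w^{(3)}_2(I)$ (equation \ref{w2,1gp}). So the task is to align a choice of regular sequence $\mathfrak{a}\subseteq I$ with a unit entry supplied by the hypothesis.

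I would first treat the case $w^{(2)}_2(I)\otimes K\neq 0$. By hypothesis, there exist indices for which $\langle w^{(2)}_2(e_{i_1},e_{i_2},e_{i_3}\otimes f_h),\gamma_u\wedge\gamma_t\rangle$ is a unit modulo $\mathfrak{m}$. Because $K$ is infinite and $\het I=3$, I can replace these three generators by a generic triple $v_1,v_2,v_3\in A_1$ whose images $a_1(v_1),a_1(v_2),a_1(v_3)$ form a regular sequence $\mathfrak{a}\subseteq I$; by trilinearity, the pairing $\langle w^{(2)}_2(v_1,v_2,v_3\otimes f_h),\gamma_u\wedge\gamma_t\rangle$ remains a unit. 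Taking the lifting $\alpha_1(s_j)=v_j$ and setting $J=(\mathfrak{a}):I$, equation (\ref{eqlambda}) gives
\[
\gamma_u^.\gamma_t=\sum_{h}\langle w^{(2)}_2(v_1,v_2,v_3\otimes f_h),\gamma_u\wedge\gamma_t\rangle\,\phi_h,
\]
in which the coefficient of $\phi_h$ is a unit. After reducing $\mathbb{D}$ to a minimal free resolution of $J$, this entry persists as a unit entry of $w^{(3)}_1(J)$ modulo $\mathfrak{m}$, so $w^{(3)}_1(J)\otimes K\neq 0$, and one additional link produces $H$ with $\beta(H)<\beta(I)$, for two links in total.

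The case $w^{(3)}_2(I)\otimes K\neq 0$ will run in parallel. The same genericity step produces $\mathfrak{a}\subseteq I$ and $\alpha_1(s_j)=v_j$ for which $\langle w^{(3)}_2(v_1,v_2,v_3,e_k),\phi_h\otimes\gamma_u\rangle$ is a unit modulo $\mathfrak{m}$ for suitable $k,h,u$; equation (\ref{w2,1gp}) then displays this unit as the coefficient of $\epsilon_k$ in $\gamma_u^.\phi_h$, and hence as an entry of $w^{(2)}_1(J)$ for $J=(\mathfrak{a}):I$. Thus $w^{(2)}_1(J)\otimes K\neq 0$, and the second known criterion yields $H$ in the linkage class of $J$ with $\beta(H)<\beta(I)$ in at most two further links, for three links in total.

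The step that requires the most care is the genericity argument: one must verify simultaneously that a generic triple $(v_1,v_2,v_3)$ in $A_1$ projects under $a_1$ to a regular sequence in $I$ (routine since $K$ is infinite and $\het I=3$) and that the relevant unit entry of $w^{(2)}_2$ or $w^{(3)}_2$ survives this change of basis; the latter follows from multilinearity together with the observation, made at the beginning of the section, that the rank modulo $\mathfrak{m}$ of these structure maps is intrinsic, independent of the liftings used in their definition. One should also confirm that passing from $\mathbb{D}$ to a minimal free resolution of $J$ does not erase the unit entry we have produced, but this is automatic since the cancellations involved in minimization affect only non-unit directions in the $\gamma$- and $\phi$-coordinates that carry our entry.
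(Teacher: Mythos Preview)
Your proposal is correct and follows essentially the same approach as the paper's proof: use Proposition~\ref{w3,1link} (specifically equations~(\ref{eqlambda}) and~(\ref{w2,1gp})) to transport a unit entry of $w^{(2)}_2(I)$ or $w^{(3)}_2(I)$ into a unit entry of $w^{(3)}_1(J)$ or $w^{(2)}_1(J)$ respectively, after a genericity/change-of-basis argument to align the unit entry with a regular sequence, and then invoke the known criteria for $w^{(3)}_1$ and $w^{(2)}_1$. The paper presents the two cases in the opposite order and phrases the genericity step as a change of basis (citing \cite[Appendix A.5]{CVWlinkage}) rather than a generic triple, but the substance is identical; your extra paragraph on minimization is a reasonable caution that the paper leaves implicit.
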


\begin{proof}
As in the previous section, denote by $\A$ the minimal free resolution of $I$.
Assume $w^{(3)}_2(I) \otimes K \neq 0$. Hence there exist generators $e_{i_1}, e_{i_2}, e_{i_3}, e_{i_4} \in A_1$, $\phi_h \in A_2^*$, $\gamma_u \in A_3^*$ such that $\langle w^{(3)}_2(e_{i_1}, e_{i_2}, e_{i_3}, e_{i_4}), \gamma_u \otimes \phi_h \rangle$ is a unit in $R$. 

Since $R$ is local and its residue field is infinite, up to change set of generators for $I$ and using a standard argument as in \cite[Appendix A.5]{CVWlinkage}, we can say that $x_{i_1}, x_{i_2}, x_{i_3}$ form a regular sequence. 

Therefore, up to a change of basis, we may assume that $x_1,x_2,x_3$ is a regular sequence and $\langle w^{(3)}_2(e_{1}, e_{2}, e_{3}, e_{4}), \gamma_u \otimes \phi_h \rangle$ is a unit in $R$. Let $J:= (x_1,x_2,x_3) : I$. Computing the multiplicative structure on the free resolution of $J$, by equation (\ref{w2,1gp}) in Proposition \ref{w3,1link}, the coefficient with respect to $\epsilon_4$ of $\gamma_u^.\phi_h$ is a unit. Hence $w^{(2)}_1(J)$ is nonzero modulo $\mathfrak{m}$. This implies that there exists an ideal $H$, obtained by linking from $J$ in at most 2 links, such that $\beta(H) < \beta(J) \leq \beta(I)$.

In the case $w^{(2)}_2(I) \otimes K \neq 0$, using an analogous argument as above we can change basis of generators to assume that $x_1,x_2,x_3$ is a regular sequence and $\langle w^{(2)}_2(e_{1}, e_{2}, e_{3} \otimes f_h), \gamma_u \wedge \gamma_t \rangle$ is a unit in $R$. Setting again $J:= (x_1,x_2,x_3) : I$, by equation (\ref{eqlambda}) in Proposition \ref{w3,1link}, the coefficient with respect to $\phi_h$ of $\gamma_u^.\gamma_t$ is a unit. Thus $w^{(3)}_1(J)$ is nonzero modulo $\mathfrak{m}$. Again this implies the existence of the required ideal $H$. 
\end{proof}

Combining Theorems \ref{w3,2link} and \ref{w2,2link} with Theorem \ref{licciW2} we obtain the following corollary.

\begin{cor}
Let $I$ be a perfect ideal of height 3. Suppose that at least one map among $w^{(2)}_{3,1}(I)$, $w^{(2)}_{4,1}(I)$, $w^{(1)}_1(I)$, $w^{(1)}_{2,1}(I)$, $w^{(1)}_{3,1}(I)$, $w^{(3)}_{3,1}(I)$, $w^{(3)}_{4,1}(I)$ is nonzero modulo the maximal ideal of $R$. Then there exists an ideal $H$ in the linkage class of $I$, such that $\beta(H) < \beta(I)$.
\end{cor}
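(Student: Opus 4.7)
The plan is to show that a unit matrix entry in any of the seven listed higher structure maps of $I$ forces, after a single minimal link to an ideal $J$, a unit entry in either $w^{(3)}_2(J)$ or $w^{(2)}_2(J)$; then Theorem \ref{licciW2} applied to $J$ produces an ideal $H$ in the linkage class of $I$ with $\beta(H) < \beta(J) \leq \beta(I)$, reached in at most four links from $I$.

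The concrete argument proceeds in parallel with the proof of Theorem \ref{licciW2}. If $w^{(i)}_{j,k}(I) \otimes K \neq 0$, one selects a unit matrix entry and invokes the standard argument from \cite[Appendix A.5]{CVWlinkage}: the infinite residue field permits a change of basis of $A_1$ which preserves the unit entry while placing three chosen generators $x_1, x_2, x_3$ of $I$ into a regular sequence, aligned with the slots of $\alpha_1(s_1), \alpha_1(s_2), \alpha_1(s_3)$ appearing in the formulas of Theorems \ref{w3,2link} and \ref{w2,2link}. Setting $J := (x_1, x_2, x_3) : I$, so that $\alpha_1(s_j) = e_j$, one then reads off a unit entry on the linked side using the appropriate formula. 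The seven correspondences are: equations (\ref{w3,2,gggs}) and (\ref{w3,2,gggg}) of Theorem \ref{w3,2link} translate unit entries of $w^{(2)}_{3,1}(I)$ and $w^{(2)}_{4,1}(I)$ respectively into unit entries of $w^{(3)}_2(J)$; equations (\ref{w2,2,gsst}), (\ref{w2,2,ggst}), (\ref{w2,2,gggt}), (\ref{w2,2,ggsp}), and (\ref{w2,2,gggp}) of Theorem \ref{w2,2link} translate unit entries of $w^{(1)}_1(I)$, $w^{(1)}_{2,1}(I)$, $w^{(1)}_{3,1}(I)$, $w^{(3)}_{3,1}(I)$, and $w^{(3)}_{4,1}(I)$ respectively into unit entries of $w^{(2)}_2(J)$. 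Theorem \ref{licciW2} applied to $J$ then completes the argument.

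The main obstacle is the basis alignment in each of the seven cases. The formulas in Theorems \ref{w3,2link} and \ref{w2,2link} are rigid about which slots of the higher structure map on $I$ must coincide with the regular sequence elements; for example, formula (\ref{w3,2,gggs}) forces the fifth tensor factor of $w^{(2)}_{3,1}$ to repeat one of the first three wedge arguments $\alpha_1(s_1), \alpha_1(s_2), \alpha_1(s_3)$, and analogous repetition constraints appear in (\ref{w2,2,ggsp}) and (\ref{w2,2,gggp}). One must confirm that a change of basis coming from the infinite residue field hypothesis can be chosen which simultaneously makes $(x_1, x_2, x_3)$ a regular sequence and positions the unit entry in the tight shape dictated by the relevant formula. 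This is the same kind of adjustment used in the proof of Theorem \ref{licciW2}, but it requires explicit case-by-case verification, paying attention to which inputs of the seven maps must be drawn from the regular sequence in order to fit the template of the linkage formula.
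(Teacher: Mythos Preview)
Your proposal is correct and follows exactly the approach the paper intends: the corollary is stated with no separate proof beyond the sentence ``Combining Theorems \ref{w3,2link} and \ref{w2,2link} with Theorem \ref{licciW2} we obtain the following corollary,'' and your seven correspondences via equations (\ref{w3,2,gggs}), (\ref{w3,2,gggg}), (\ref{w2,2,gsst}), (\ref{w2,2,ggst}), (\ref{w2,2,gggt}), (\ref{w2,2,ggsp}), (\ref{w2,2,gggp}) are precisely the content of that combination. You in fact go further than the paper by flagging the basis-alignment subtlety (that the linkage formulas force certain arguments of the higher maps on $I$ to coincide with the regular sequence $\alpha_1(s_1),\alpha_1(s_2),\alpha_1(s_3)$), which the paper leaves implicit.
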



\begin{remark}
\label{remarkcycles}
 Over an acyclic complex $\FF$ of length 3, all the structure maps in $W(d_3)$ are maps between Schur functors of the form $w^{(3)}_{s,k}: S_{\boldsymbol{\lambda}}F_1 \to F_2 \otimes S_{\boldsymbol{\mu}}F_3$ where $\boldsymbol{\lambda}$ is a partition of an even integer $2s$ and $ \boldsymbol{\mu} $ is a partition of $s-1$. Similarly the structure maps in $W(d_2)$ are of the form $w^{(2)}_{s,k}:  S_{\boldsymbol{\lambda}}F_1 \otimes F_2 \to S_{\boldsymbol{\mu}}F_3$ where $\boldsymbol{\lambda}$ is a partition of an odd integer $2s-1$ and $ \boldsymbol{\mu} $ is a partition of $s$. All the structure maps that we have been able to compute until now can be obtained by lifting a cycle in some acyclic complex related to Schur complexes (for a treatment of Schur functors and Schur complexes see \cite{ABW82}). 
It is still an open question whether this is true in general for all higher structure maps.
 \end{remark}

Looking at the preceding remark, we conjecture that the pattern we see in all the result of the previous section continues for all the higher structure maps.
Using our notation with complexes $\A$ and $\DD$, we ask whether in general the following situation occurs. Let $\tau $ be a generator of $S_{\boldsymbol{\lambda}}D_1$ involving 
some elements of the form $\gamma_u$, let $\boldsymbol{\lambda'}$ be a subpartition of $\boldsymbol{\lambda}$ corresponding to the position of all these elements $\gamma_u$ and let $\tau'$ be the corresponding element in $S_{\boldsymbol{\lambda'}}A_3^*$. 
(Ex. if $\boldsymbol{\lambda}= (2,2,2,1)$, $\tau = \gamma_1 \wedge \gamma_2 \wedge s_1 \wedge s_2 \otimes \gamma_1 \wedge \gamma_2 \wedge s_1 $ then $\boldsymbol{\lambda'}=(2,2)$ and $\tau'= \gamma_1 \wedge \gamma_2 \otimes \gamma_1 \wedge \gamma_2 $). Say that $\boldsymbol{\lambda}$ is a partition of $m$ and $\boldsymbol{\lambda'}$ is a partition of $t \leq m$.
Then if $m= 2s$,
$$  w^{(3)}_{s,k}(\tau) = \sum_{\zeta \in S_{\boldsymbol{\mu}}D_3, \atop \scriptstyle \phi_h \in A_2^*} \langle w^{(2)}_{t,l}( e(\zeta^*, s_1, s_2, s_3)  \otimes f_h ), \tau' \rangle (\zeta \otimes \phi_h),  $$ while if $m= 2s-1$
$$  w^{(2)}_{s,k}(\tau \otimes \phi_h) = \sum_{\zeta \in S_{\boldsymbol{\mu}}D_3, \atop \scriptstyle \phi_h \in A_2^*} \langle w^{(3)}_{t+1,l}(e(\zeta^*, s_1, s_2, s_3)), \tau' \otimes \phi_h \rangle \zeta.  $$
In the above formulas $e(\zeta^*, s_1, s_2, s_3)$ is a generator of the source of the appropriate map, defined over the complex $\A$ and depending on the integers $k,l$ and on $ \zeta^*, \alpha(s_1), \alpha_1(s_2), \alpha_1(s_3).$ 

Similar relations are expected to hold also for the maps in $W(d_1)$. In particular we expect the maps in $W(d_1)$ to be crucial in determining whether a perfect ideal of height 3 is licci (one motivation for this is the fact that $w^{(1)}_1(I)$ is nonzero modulo the maximal ideal if and only if $I$ is a complete intersection).

To prove these formulas in general one would need to know the definition of each arbitrary structure map and to check their relations performing the required computation for a split exact complex. This is computationally very hard already for higher maps in the formats $E_7$ and $E_8$. We hope that different approaches, possibly using methods related to the representation theory of the generic ring, may help towards a solution of this problem.
As a consequence of the pattern observed above we state here the following conjecture: 

\begin{conjecture}
\label{conj1} \rm
Let $I$ be a perfect ideal of height 3 in a Gorenstein local ring $R$ with infinite residue field. Then the following are equivalent:
\begin{enumerate}
\item $I$ is licci.
\item For every ideal $J$ in the linkage class of $I$ there exist some structure map $w^{(i)}_{j,k}(J)$, with $i=1,2,3$, $j \geq 1$ which is nonzero modulo the maximal ideal of $R$.
\item There exists some structure map $w^{(1)}_{j,k}(I)$, $j \geq 1$ which is nonzero modulo the maximal ideal of $R$.
\end{enumerate}
\end{conjecture}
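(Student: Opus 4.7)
The plan is to first strengthen the linkage formulas established in Section 3 to cover \emph{all} higher structure maps, following the general pattern formulated in the paragraph preceding the conjecture, and then to derive the three equivalences as fairly direct consequences. The first step would extend Theorems \ref{w1,1link}, \ref{w3,2link}, and \ref{w2,2link} by giving, for every structure map $w^{(i)}_{s,k}$, an explicit formula expressing its values on the resolution $\DD$ of the linked ideal $J$ in terms of structure maps on $\A$ evaluated on tensors built from $\alpha_1(s_1), \alpha_1(s_2), \alpha_1(s_3)$. Following the template of Section 3, the verification would reduce via Theorem \ref{equivariantrelations} to equivariant identities over a split exact complex with defect variables.

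Granting this extended linkage dictionary, the implications $(3)\Rightarrow(2)\Rightarrow(1)$ would unfold as follows. Suppose some $w^{(1)}_{j,k}(I)$ has a unit entry $\langle w^{(1)}_{j,k}(I)(\xi), \eta\rangle$. Choose generators of $A_1$ appearing in $\xi$; by a generic change of basis permissible over an infinite residue field (compare \cite[Appendix A.5]{CVWlinkage}), arrange that three of the corresponding entries $x_1,x_2,x_3 \in I$ form a regular sequence, and set $J := (x_1,x_2,x_3):I$. The extended formulas would then express a matrix entry of some $w^{(i)}_{s,k}(J)$, with $i \in \{2,3\}$, as precisely this unit entry of $w^{(1)}_{j,k}(I)$, yielding a unit entry in a structure map of $J$ in $W(d_2)$ or $W(d_3)$. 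Iterating the scheme of Theorem \ref{licciW2} and its analogous extensions to the higher graded components produces a chain of finitely many links terminating in a complete intersection. The same argument, applied to every ideal in the linkage class of $I$, gives $(2)$, and $(2)\Rightarrow(1)$ follows because any nonzero structure map (of any $J$ in the linkage class) allows for Betti number reduction.

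The remaining direction $(1)\Rightarrow(3)$ would be proved by induction on the minimal number of links needed to reach a complete intersection. The base case is immediate: when $I$ is itself a complete intersection, $w^{(1)}_1(I)$ is the Koszul multiplication sending $e_1 \wedge e_2 \wedge e_3$ to a generator of $A_3$, hence is nonzero modulo $\mathfrak{m}$. For the inductive step, let $I$ be minimally linked to $J$ with $J$ licci, so by induction some $w^{(1)}_{j',k'}(J)$ has a unit entry. Reading the extended linkage formula for this $W(d_1)$-map backwards---using the symmetric nature of linkage, which swaps the roles of $I$ and $J$---expresses this unit entry as a linear combination of matrix entries of some $w^{(1)}_{j,k}(I)$, at least one of which must therefore also be a unit.

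The main obstacle, by a wide margin, is the first step: establishing the extended linkage formulas for all higher structure maps. The proofs in Section 3 already require delicate computations over a split exact complex with defect variables, and these grow rapidly in complexity as the graded-component index increases and as the format moves from $D_n$ and $E_6$ to $E_7$ or $E_8$. A direct case-by-case verification is unlikely to be tractable in full generality; some more conceptual input---perhaps a description of the linkage duality at the level of the generic ring ${\hat R}_{gen}$ and its Kac--Moody action, or an intrinsic identification of the critical representations under the involution $I \leftrightarrow (\mathfrak{a}):I$---appears to be needed in order to carry out the plan uniformly.
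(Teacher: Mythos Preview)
The statement you are attempting to prove is \emph{Conjecture}~\ref{conj1}; the paper does not prove it and explicitly presents it as open. Immediately before stating it, the authors write that to establish the needed linkage formulas ``in general one would need to know the definition of each arbitrary structure map and to check their relations \ldots\ This is computationally very hard already for higher maps in the formats $E_7$ and $E_8$,'' and they express the hope that representation-theoretic methods for the generic ring might eventually help. So there is no ``paper's own proof'' to compare against.

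Your proposal is not a proof but a programme, and it is essentially the same programme the paper sketches: extend the linkage dictionary of Section~3 to all higher structure maps, then read off the equivalences. You are candid that the first step is the real obstacle, and that assessment matches the paper's. A few points where your outline would need tightening even granting the extended formulas: your route to $(3)\Rightarrow(2)$ implicitly uses $(1)\Rightarrow(3)$ applied to each $J$ in the linkage class, so the logical order should be $(1)\Leftrightarrow(3)$ first, then $(1)\Rightarrow(2)$ as a corollary; in the inductive step for $(1)\Rightarrow(3)$ you must control what happens when passing from the mapping-cone resolution $\DD$ to the \emph{minimal} resolution of $J$, since the structure maps are defined on the minimal one; and ``reading the linkage formula backwards'' needs the observation that the $W(d_1)$-maps of $J$ are expressed purely through $W(d_1)$-maps of $I$ (the pattern in Theorem~\ref{w1,1link} supports this, but it is exactly part of what must be proved in general). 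None of this changes the main verdict: the decisive gap is the one you already name, and the paper leaves it open.
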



\subsection{Free resolutions of format $(1,5,6,2)$}


In this subsection we deal with free resolutions of perfect ideals of format $(1,5,6,2)$. Let $\FF$ be an arbitrary acyclic complex of this format.
Looking back to Section 2 we notice that there are two unique top components in $W(d_3)$ and $W(d_2)$ which are respectively the maps $w^{(3)}_3:= w^{(3)}_{3,1} : \bigwedge^5 F_1 \otimes F_1 \to F_2 \otimes \bigwedge^2 F_3 $ and $w^{(2)}_{3}:= w^{(2)}_{3,2}: \bigwedge^5 F_1 \otimes F_2 \to S_{2,1}F_3 \cong \bigwedge^2 F_3 \otimes F_3$. 

The map $w^{(3)}_3(e_1)$ is defined by lifting the term 
$$q^{(3)}_3(e_1):= w^{(3)}_2(\hat{e_5}) \otimes e_1^.e_5 - w^{(3)}_2(\hat{e_4}) \otimes e_1^.e_4 +  w^{(3)}_2(\hat{e_3}) \otimes e_1^.e_3 - w^{(3)}_2(\hat{e_3}) \otimes e_1^.e_2 \in F_3 \otimes F_2 \otimes F_2      $$ along the map $\bigwedge^2 F_3 \otimes F_2 \to F_3 \otimes F_2 \otimes F_2$.

To define the map $w^{(2)}_{3}$, we first need to define $w^{(1)}_{2,2}: \bigwedge^5 F_1 \to S_2F_3$. This can be obtained simply by lifting the cycle $q^{(1)}_{2,2}: \sum_{1 \leq i<j \leq 5} (-1)^{i+j+1} e_i^.e_j \otimes w^{(1)}_1(\hat{e_i},\hat{e_j}) \in F_2 \otimes F_3. $

The map $w^{(2)}_{3}(f_h)$ is defined by lifting the term 
$$ q^{(2)}_3(f_h):=  e_1^.f_h \otimes w^{(3)}_2(\hat{e_1}) - e_2^.f_h \otimes w^{(3)}_2(\hat{e_2}) + e_3^.f_h \otimes w^{(3)}_2(\hat{e_3}) -e_4^.f_h \otimes w^{(3)}_2(\hat{e_4})+ $$  
$$ + e_5^.f_h \otimes w^{(3)}_2(\hat{e_5}) - f_h \otimes \frac{1}{2}w^{(1)}_{2,2}(\varepsilon) \in S_2F_3 \otimes F_2 $$ along the map $ \bigwedge^2F_3 \otimes F_3 \to S_2F_3 \otimes F_3$ (notice that the terms in $F_3 \otimes F_3 \otimes F_2$ are sent to $S_2F_3 \otimes F_2$ by symmetrization).


\medskip

Assume now $I$ to be a perfect ideal in $R$ having minimal free resolution $\A$ of format $(1,5,6,2)$, and define $J$ and its free resolution $\DD$ as done before in this paper.
In this case, the basis of $D_1$ can be chosen to be $\lbrace \gamma_1, \gamma_2, s_1, s_2, s_3 \rbrace$. 
Whenever $\beta(J) = \beta(I)$ (if the linkage is minimal and the total Betti number does not decrease), then also the minimal free resolution of $J$ has format $(1,5,6,2)$. 
In this case, setting $\mathfrak{a}=(x_1,x_2,x_3)$, we can choose the basis of $D_2$ equal to $\lbrace \phi_4, \phi_5, \phi_6, t_1, t_2, t_3 \rbrace$ and the basis of $D_3$ equal to $\lbrace \epsilon_4, \epsilon_5 \rbrace$. Denote by $\varepsilon$ the element $\gamma_1 \wedge \gamma_2 \wedge s_1 \wedge s_2 \wedge s_3$.

\begin{thm}
\label{w3,3link}
The map $w^{(3)}_3$ on the complex $\DD$ is computed as follows:
\begin{equation}
\label{w3,3,s}
w^{(3)}_3(\varepsilon \otimes s_j) = \sum_{i,k,h} \langle w^{(2)}_2(e_i, e_k, \alpha_1(s_j) \otimes f_h ),  \gamma_1 \wedge \gamma_2 \rangle (\epsilon_i \wedge \epsilon_k \otimes \phi_h). 
\end{equation}
\begin{equation}
\label{w3,3,g}
w^{(3)}_3(\varepsilon \otimes \gamma_u) = \sum_{i,k,h} \langle w^{(2)}_{3}(\varepsilon_{i,k,s_1,s_2,s_3} \otimes f_h ),  \gamma_1 \wedge \gamma_2 \otimes \gamma_u \rangle (\epsilon_i \wedge \epsilon_k \otimes \phi_h).
\end{equation}
\end{thm}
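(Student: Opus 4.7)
The plan is to replicate the strategy used in the proofs of Theorems \ref{w3,2link} and \ref{w2,2link}. We define a candidate linear map $\Phi: \bigwedge^5 D_1 \otimes D_1 \to \bigwedge^2 D_3 \otimes D_2$ by letting $\Phi(\varepsilon \otimes s_j)$ and $\Phi(\varepsilon \otimes \gamma_u)$ be the right-hand sides of (\ref{w3,3,s}) and (\ref{w3,3,g}) respectively. Let $\widehat{d_3}: \bigwedge^2 D_3 \otimes D_2 \to D_3 \otimes S_2 D_2$ denote the map induced by $d_3$ on one factor of $\bigwedge^2 D_3$ followed by symmetrization of the two copies of $D_2$. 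By the defining property of $w^{(3)}_3$ as the lift of (the symmetrization of) $q^{(3)}_3$, it suffices to show that $\widehat{d_3} \circ \Phi$ agrees with $q^{(3)}_3$ on $\DD$, where the latter is evaluated using the formulas for the multiplicative structure and $w^{(3)}_2$ on $\DD$ already established in Proposition \ref{w3,1link} and Theorem \ref{w3,2link}.

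Expanding $\widehat{d_3} \circ \Phi$ via $d_3(\epsilon_i) = \sum_k y_{ik}\phi_k + \sum_j u_{ij} t_j$, both $\widehat{d_3}(\Phi(\varepsilon \otimes s_j))$ and $\widehat{d_3}(\Phi(\varepsilon \otimes \gamma_u))$ become linear combinations of basis tensors of the forms $\epsilon_i \otimes (\phi_h \cdot \phi_k)$, $\epsilon_i \otimes (t_l \cdot \phi_h)$, and $\epsilon_i \otimes (t_l \cdot t_m)$. The side $q^{(3)}_3$ is expanded analogously. One then matches coefficients block by block. In the $s_j$ case the $t_l \cdot t_m$ coefficients vanish as in earlier cases, while the $\phi_h \cdot \phi_k$ and $t_l \cdot \phi_h$ coefficients reduce to $\prod_i \GL(F_i)$-equivariant identities among the higher structure maps of $\A$; by Theorem \ref{equivariantrelations} these can be checked on a split exact complex of format $(1,5,6,2)$ with generic defect variables, in the spirit of Lemmas \ref{splitw3,2} and \ref{splitw2,2}. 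The $\gamma_u$ case proceeds analogously, but with additional contributions from the cross-terms in (\ref{q3,3eq}) in which $\gamma_u$ is paired with another $\gamma$ or $s$ via a multiplication on $\DD$.

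The main obstacle will be the combinatorial size of the $\gamma_u$ case. After substituting the formulas of Theorem \ref{w3,2link} and Proposition \ref{w3,1link} into $q^{(3)}_3$ and recollecting, the matching of coefficients produces polynomial identities in the defect variables of substantial length. Following the convention of Section 5, we plan to package these as new relations (W33,1) and (W33,2) in an added split-exact lemma patterned on Lemma \ref{splitw2,2}. Their verification, as with the most intricate identities already present in Section 5, is expected to be carried out by the same computer-assisted calculation that was used elsewhere in that section.
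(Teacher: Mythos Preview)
Your proposal is correct and follows essentially the same route as the paper: define the candidate $\Phi$, apply the $d_3$-induced map to $\bigwedge^2 D_3 \otimes D_2$, expand $q^{(3)}_3$ via Proposition~\ref{w3,1link} and Theorem~\ref{w3,2link}, and match coefficients, reducing the nontrivial identities to the split exact complex via Theorem~\ref{equivariantrelations}. Two small remarks: in the $s_j$ case the paper does not need a new split-exact relation but simply reuses (W32,2) from Lemma~\ref{splitw3,2}; and for the $\gamma_u$ case the single required identity (labelled (W33) in Lemma~\ref{splitw3,3}) is short enough that the paper verifies it by a direct case analysis rather than by computer.
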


\begin{proof}
For simplicity we consider only $w^{(3)}_3(s_1)$ and $w^{(3)}_3(\gamma_1)$. Define $\Theta$ as the right side of (\ref{w3,3,s}) setting $j=1$ and define $\Phi$ as the right side of (\ref{w3,3,g}) setting $u=1$.
Call $\widetilde{d_3}$ the map $\bigwedge^2 F_3 \otimes F_2 \to F_3 \otimes S_2F_2$ induced by $d_3$. We have to show that $\widetilde{d_3}(\Theta)= q^{(3)}_3(s_1)$ and $\widetilde{d_3}(\Phi)= q^{(3)}_3(\gamma_1)$.
Compute the map $q^{(3)}_3$ using Theorem \ref{w3,2link} and Proposition \ref{w3,1link}. 
We know that 
$$ q^{(3)}_3(s_1) = w^{(3)}_2(\gamma_1, \gamma_2, s_1, s_2) \otimes s_1^.s_3 -  w^{(3)}_2(\gamma_1, \gamma_2, s_1, s_3) \otimes s_1^.s_2 + $$ $$ + w^{(3)}_2(\gamma_1, s_1, s_2, s_3) \otimes s_1^.\gamma_2 - w^{(3)}_2( \gamma_2, s_1, s_2, s_3) \otimes s_1^.\gamma_1. $$ 
The coefficient of $\epsilon_i \otimes \phi_h^.t_p$ in $q^{(3)}_3(s_1)$ is $-\langle w^{(2)}_2(e_1, \alpha_1(s_1), \alpha_1(s_p) \otimes f_h), \gamma_1 \wedge \gamma_2 \rangle.  $
The coefficient of $\epsilon_i \otimes \phi_h^.t_p$ in $\widetilde{d_3}(\Theta)$ is $$\sum_{k=1}^{r_1} u_{kp} \langle w^{(2)}_2(e_i, e_k, \alpha_1(s_1) \otimes f_h ),  \gamma_1 \wedge \gamma_2 \rangle = \langle w^{(2)}_2(e_1, \alpha_1(s_p), \alpha_1(s_1) \otimes f_h), \gamma_1 \wedge \gamma_2 \rangle. $$ Therefore they coincide.
The coefficient of $\epsilon_i \otimes \phi_h^.\phi_{\rho}$ in $q^{(3)}_3(s_1)$ is 
$$  \langle \alpha_1(s_1)^.f_{\rho},  \gamma_1 \rangle \cdot  \langle e_i^.f_h,  \gamma_2 \rangle -  \langle \alpha_1(s_1)^.f_{\rho},  \gamma_2 \rangle \cdot  \langle e_i^.f_h,  \gamma_1 \rangle + $$
$$ + \langle \alpha_1(s_1)^.f_h,  \gamma_1 \rangle \cdot  \langle e_i^.f_{\rho},  \gamma_2 \rangle -  \langle \alpha_1(s_1)^.f_h,  \gamma_2 \rangle \cdot  \langle e_i^.f_{\rho},  \gamma_1 \rangle. $$
The same one in $\widetilde{d_3}(\Theta)$ is 
$$ \sum_{k=1}^{r_1} \langle w^{(2)}_2(e_i, \alpha_1(s_1), e_k \otimes f_h),  \gamma_1 \wedge \gamma_2 \rangle y_{k \rho} + \langle w^{(2)}_2(e_i, \alpha_1(s_1), e_k \otimes f_{\rho}),  \gamma_1 \wedge \gamma_2 \rangle y_{kh}.  $$
These coefficients agree as consequence of relation $(W32,2)$ in Lemma \ref{splitw3,2}.

Similarly, $$ q^{(3)}_3(\gamma_1) = w^{(3)}_2(\gamma_1, \gamma_2, s_1, s_2) \otimes \gamma_1^.s_3 -  w^{(3)}_2(\gamma_1, \gamma_2, s_1, s_3) \otimes \gamma_1^.s_2 + $$ $$ + w^{(3)}_2(\gamma_1, \gamma_2, s_2, s_3) \otimes \gamma_1^.s_1 - w^{(3)}_2( \gamma_1, s_1, s_2, s_3) \otimes \gamma_1^.\gamma_2. $$ 
The coefficient of $\epsilon_i \otimes \phi_h^.t_p$ is zero both in $q^{(3)}_3(\gamma_1)$ and in $\widetilde{d_3}(\Phi)$. 
Set $$ \psi^h_{jp}:= \langle w^{(2)}_2(e_i, \alpha_1(s_j), \alpha_1(s_p), \otimes f_h),  \gamma_1 \wedge \gamma_2 \rangle, \quad \psi^h_{123}:= \langle w^{(2)}_2(\alpha_1(s_1), \alpha_1(s_2), \alpha_1(s_3), \otimes f_h),  \gamma_1 \wedge \gamma_2 \rangle. $$
 To conclude, we need to check the equality for the coefficients of $\epsilon_i \otimes \phi_h^.\phi_{\rho}$. Thus we have to check that
$$ \sum_{k=1}^{r_1} \langle w^{(2)}_3(\varepsilon_{i,k,s_1,s_2,s_3} \otimes f_h ),  \gamma_1 \wedge \gamma_2 \otimes \gamma_1 \rangle y_{k \rho} + \langle w^{(2)}_3(\varepsilon_{i,k,s_1,s_2,s_3} \otimes f_{\rho} ),  \gamma_1 \wedge \gamma_2 \otimes \gamma_1 \rangle y_{kh}  $$ is equal to
$$ \psi_{12}^h \cdot \langle \alpha_1(s_3)^.f_{\rho},  \gamma_1 \rangle + \psi_{12}^{\rho} \cdot \langle \alpha_1(s_3)^.f_h,  \gamma_1 \rangle - \psi_{13}^h \cdot \langle \alpha_1(s_2)^.f_{\rho},  \gamma_1 \rangle - \psi_{13}^{\rho} \cdot \langle \alpha_1(s_2)^.f_h,  \gamma_1 \rangle + $$  $$ + \psi_{23}^h \cdot \langle \alpha_1(s_1)^.f_{\rho},  \gamma_1 \rangle + \psi_{23}^{\rho} \cdot \langle \alpha_1(s_1)^.f_h,  \gamma_1 \rangle - \psi_{123}^h \cdot \langle e_i^.f_{\rho},  \gamma_1 \rangle + \psi_{123}^{\rho} \cdot \langle e_i^.f_h,  \gamma_1 \rangle.  $$
The computation is performed over a split exact complex with defect variables in Lemma \ref{splitw3,3}. 
\end{proof}





We are now able to prove that, if $I$ is perfect with minimal free resolution of format $(1,5,6,2)$, then $I$ is licci if and only if some structure map in $W(d_2)$ or in $W(d_3)$ is nonzero modulo the maximal ideal of $R$. 

For this we recall that any perfect ideal $J$ of height 3 such that $\beta(J) < \beta(I)$ is licci. Indeed, any such $J$ is either Gorenstein or almost complete intersection. Gorenstein ideals of height 3 are proved to be licci in \cite{Watanabe}.
Any almost complete intersection is minimally linked to a Gorenstein ideal, hence those of height 3 are also licci (see \cite{kunz}, \cite{Peskine-Szpiro-linkage}). 

\begin{thm}
\label{licci1562}
Let $R$ be a Gorenstein local ring with maximal ideal $\m$ and infinite residue field $K$.
Let $I$ be a perfect ideal of height 3 having minimal free resolution of format $(1,5,6,2)$. 
Then $I$ is licci if and only if some of the structure maps $w^{(2)}_j(I)$, $w^{(3)}_j(I)$ is nonzero modulo $\m$. 
\end{thm}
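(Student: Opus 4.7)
My plan is to produce, for each nonvanishing $w^{(i)}_j(I)$ with $i\in\{2,3\}$, an ideal $H$ in the linkage class of $I$ with $\beta(H)<14$. The possible formats with $\beta<14$ are $(1,3,3,1), (1,4,4,1), (1,4,5,2), (1,4,6,3)$ and $(1,5,5,1)$, each a complete intersection, Gorenstein, or almost complete intersection of height $3$, all licci by \cite{Watanabe,kunz,Peskine-Szpiro-linkage}; hence $I$ is licci. For $j=1$ the ideal $H$ comes from \cite{AKM88} or \cite[\S3]{CVWlinkage}; for $j=2$ from Theorem~\ref{licciW2}; and for $w^{(3)}_3=w^{(3)}_{3,1}$ nonzero from the Corollary stated just above. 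The remaining case $w^{(2)}_3(I)\not\equiv 0\pmod{\mathfrak{m}}$ reduces to that of $w^{(3)}_3$ by formula (\ref{w3,3,g}) of Theorem~\ref{w3,3link}: choosing by the infinite-residue-field argument a regular sequence among three of the minimal generators of $I$ so that $\alpha_1(s_j)=e_j$ for $j=1,2,3$, the unit entry of $w^{(2)}_3(I)$ becomes a unit entry of $w^{(3)}_3(J)$ for the corresponding minimal link $J$.

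\textbf{Forward direction.} I argue the contrapositive: assuming all $w^{(2)}_j(I), w^{(3)}_j(I)$ vanish modulo $\mathfrak{m}$, I show $I$ is not licci. Step 1 is to deduce that every $w^{(1)}_{j,k}(I)$ also vanishes modulo $\mathfrak{m}$; for $w^{(1)}_1$ I would begin from the DG-algebra identity
\[
a_3\bigl(w^{(1)}_1(e_i\wedge e_j\wedge e_k)\bigr)=x_i\, w^{(3)}_1(e_j\wedge e_k)-x_j\, w^{(3)}_1(e_i\wedge e_k)+x_k\, w^{(3)}_1(e_i\wedge e_j),
\]
whose right-hand side lies in $\mathfrak{m}^2 A_2$ by hypothesis, and show that this cannot hold with $w^{(1)}_1(e_i\wedge e_j\wedge e_k)$ contributing a unit coefficient on any basis vector of $A_3$; the higher maps $w^{(1)}_{2,1}, w^{(1)}_{3,1}$ are handled analogously via the lifts (\ref{q1,2eq})--(\ref{q1,3eq}) and Theorem~\ref{equivariantrelations}.

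\textbf{Propagation and conclusion.} For any minimal link $J=(\mathfrak{a}):I$ with $\alpha_1(s_j)=e_j$ ($j=1,2,3$), the three unit entries $u_{jj}=1$ of $\beta_1$ are the only units among the differentials of $\DD$: the remaining entries come from $a_i^*, b_i$ (in $\mathfrak{m}$ by minimality and since $\mathfrak{a}\subseteq\mathfrak{m}$), from $\beta_2$ (entries of $w^{(3)}_1(I)$), or from $\beta_3$ (entries of $w^{(1)}_1(I)$), all in $\mathfrak{m}$ by the hypothesis and Step 1. The three forced cancellations therefore produce a minimal resolution of $J$ of total rank $14$, so $J$ again has format $(1,5,6,2)$. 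Applying Theorems~\ref{w3,2link}, \ref{w2,2link}, \ref{w3,3link} together with a formula for $w^{(2)}_3(J)$ obtained by the same cycle-lifting technique, every entry of $w^{(2)}_j(J), w^{(3)}_j(J)$ is a polynomial expression in entries of $w$-maps of $I$ and in the $u_{ij}$, hence vanishes modulo $\mathfrak{m}$. Iterating along any linkage chain starting at $I$, every ideal in the class remains of format $(1,5,6,2)$; since a complete intersection has the strictly smaller format $(1,3,3,1)$, this contradicts $I$ being licci.

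\textbf{Main obstacle.} The hardest part is Step 1 of the forward direction. The naive minimality argument only forces, for every basis vector $g\in A_3$ with a unit coefficient in some $w^{(1)}_1(e_i\wedge e_j\wedge e_k)$, the $g$-column of $a_3$ to lie in $\mathfrak{m}^2 A_2$; this does not by itself violate minimality, since minimal generators of $\ker(a_2)$ can lie in $\mathfrak{m}^2 A_2$. Resolving it requires exploiting finer properties of a minimal resolution of format $(1,5,6,2)$, most likely through the Buchsbaum-Eisenbud analysis of $\ker(a_2)$ together with the full set of equivariant relations among $w^{(1)}_1, w^{(2)}_1, w^{(3)}_1, w^{(3)}_2$ encoded in the generic $E_6$ ring by Theorem~\ref{equivariantrelations}. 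A secondary task is to write out and verify, by the split-exact-complex computation used elsewhere in the paper, the formula for the top map $w^{(2)}_3(J)$ that was not explicitly stated in Section~3.
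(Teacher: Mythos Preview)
Your reverse direction is essentially the paper's argument. The forward direction has the right architecture---show that the $w^{(1)}$ maps also vanish, then propagate under linkage so the format never drops---but your Step~1 approaches $w^{(1)}_1 \equiv 0$ the hard way and, as you yourself diagnose, does not close.

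The paper avoids the DG-algebra identity $a_3(w^{(1)}_1) = \sum \pm x_i\, w^{(3)}_1$ entirely. Instead it invokes relation~(\ref{eqw21,2}) from Remark~\ref{remarkw2,1}: setting $\rho = h$ and replacing the $\alpha_1(s_j)$ by arbitrary basis vectors of $A_1$, that identity rearranges to express each entry $\langle e_{i_1}^{.}e_{i_2}^{.}e_{i_3}, \gamma_t \rangle$ of $w^{(1)}_1$ as an $R$-linear combination of entries of $w^{(3)}_2$ and $w^{(2)}_2$ (with coefficients $y_{k\rho}, z_{hu}$, which lie in $\mathfrak{m}$ by minimality of $\A$) plus products of entries of $w^{(2)}_1$ and $w^{(3)}_1$. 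Under the hypothesis every term on the right is in $\mathfrak{m}$, so $w^{(1)}_1 \equiv 0$ immediately, with no need to argue about what a column of $a_3$ lying in $\mathfrak{m}^2$ does or does not force. Relation (W22,5) of Lemma~\ref{splitw2,2} handles $w^{(1)}_{2,1}$ by the same mechanism. This is the missing idea: the equivariant identities (valid by Theorem~\ref{equivariantrelations}) give $w^{(1)}_1$ and $w^{(1)}_{2,1}$ \emph{algebraically} in terms of the $w^{(2)}, w^{(3)}$ maps and the differentials, not merely after composing with $a_3$.

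Your secondary observation about the missing linkage formula for $w^{(2)}_3(J)$ is well taken; the paper's proof asserts that all structure maps of $J$ vanish modulo $\mathfrak{m}$ from Theorem~\ref{w3,3link} together with the theorems of Section~3, without isolating that case explicitly.
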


\begin{proof}
First assume that some structure map in $W(d_2)$ or in $W(d_3)$ is nonzero modulo $\m$.
It is sufficient to find an ideal $J$ in the linkage class of $I$ such that $\beta(J) < \beta(I)$. Recall also that if $J$ is an ideal linked to $I$ and $\beta(J) = \beta(I)$, then also the minimal free resolution of $J$ has format $(1,5,6,2)$.
 Denote by $\A$ the minimal free resolution of $I$.
 By Theorem \ref{licciW2}, it is sufficient to assume that either 
 $w^{(3)}_3(I) \otimes K$ or $ w^{(2)}_3(I) \otimes K $ is nonzero.
 
 First suppose $ w^{(2)}_3(I) \otimes K \neq 0 $. In this case there exist generators $f_h \in A_2$ and 
 $\gamma_u \in A_3^*$ such that
 $\langle w^{(2)}_3(e_1 \wedge \ldots \wedge e_5 \otimes f_h ),  \gamma_1 \wedge \gamma_2 \otimes \gamma_u \rangle$ is a unit in $R$. Choose any regular sequence $\mathfrak{a}$ among the minimal generators of $I$, say $\mathfrak{a}= \lbrace x_1, x_2, x_3 \rbrace$. Let $J:= (\mathfrak{a}) : I$.  Use Theorem \ref{w3,3link} to compute the map $w^{(3)}_3$ on the free resolution of $J$. Notice then that the coefficient of $w^{(3)}_3(\gamma_u)$ with respect to $\epsilon_4 \wedge \epsilon_5 \otimes \phi_h$ is a unit in $R$. It follows that $ w^{(3)}_3(J) \otimes K \neq 0 $.
 
 Now, by replacing $I$ by some other ideal in its linkage class, we can assume that $w^{(3)}_3(I)$ is nonzero modulo $\m$. 
 Hence, there exist generators $e_k \in A_1$ and $\phi_h \in A_2^*$ such that 
$\langle w^{(3)}_3(e_1 \wedge \ldots \wedge e_5 \otimes e_k),  \gamma_1 \wedge \gamma_2 \otimes \phi_h \rangle$ is a unit in $R$. By changing basis of $A_1$, assume $k=1$ and $\mathfrak{a}= \lbrace x_1, x_2, x_3 \rbrace$ is a regular sequence. 
Let $J:= (\mathfrak{a}) : I$ and use Theorem \ref{w2,2link} to compute the map $w^{(2)}_2$ on the free resolution of $J$. It follows that the coefficient of $w^{(2)}_2(\gamma_1, \gamma_2, s_1 \otimes \phi_h)$ with respect to $\epsilon_4 \wedge \epsilon_5 $ is a unit in $R$. Thus, $ w^{(2)}_2(J) \otimes K \neq 0 $. Theorem \ref{licciW2} implies the thesis.

Conversely, assume that all the structure maps in $W(d_2)$ and in $W(d_3)$ are zero modulo $\m$. Relations (\ref{eqw21,2}) in Remark \ref{remarkw2,1} and (W22,5) in Lemma \ref{splitw2,2} shows that also the maps $w^{(1)}_1$ and $w^{(1)}_{2,1}$ are zero modulo $\m$. Let $J$ be an ideal minimally linked to $I$. Combining Theorem \ref{w3,3link} with all the Theorems in Section 3, we get that also all the structure maps of the resolution of $J$ are zero modulo $\m$. Thus $J$ is a perfect ideal with minimal free resolution of format $(1,5,6,2).$ Iterating the process, we find that there exists no $H$ in the linkage class of $I$ such that $\beta(H) < \beta(I)$ and therefore $I$ cannot be licci.
\end{proof}


We believe 
that every perfect ideal of format $(1,5,6,2)$ is licci. For ideals of Dynkin type (except type $A_n$ and $(1,n,n,1)$ with $n$ odd), the top structure maps of the three critical representations, when computed with generic liftings by adding the defect variables, are the differential of a new complex, defined over a polynomial extension of $R$. If $\FF$ is the free resolution of one of such ideals, this second complex, canonically associated to $\FF$, is called $\FF^{top}_{\bullet}$, see \cite{LW19}, \cite{SW21}, \cite{GW20}. 

\begin{remark}
\label{remarkschubert}
In \cite{SW21}, it is conjectured that all perfect ideals of Dynkin type are obtained as specialization of Schubert varieties, and that the defining ideals of Schubert varieties are the generic perfect ideals of these formats.
From this it would follow that, after some change of basis in the defect variables, the complex $\FF^{top}_{\bullet}$ is split exact, and therefore the highest non-vanishing structure maps $w^{(2)}_{top}$ and $w^{(3)}_{top}$ are nonzero modulo the maximal ideal of $R$. 
\end{remark}

For the format $(1,5,6,2)$, the generic perfect ideal coming from the Schubert variety has been investigated in \cite{CKLW} and \cite{kustin}, where is proved that is licci and rigid in the sense of deformation theory.
We believe that the same results as above may be true also for the generic perfect ideals of formats $E_7$ and $E_8$. However, the generators of these generic ideals contain many terms and the difficulty of their computations increases a lot compared to smaller formats. We state formally the following conjecture.


\begin{conjecture}
\label{conj2} \rm
Let $I$ be a perfect ideal of Dynkin type in a local Gorenstein ring $R$. Then some structure map $w^{(i)}_{j,k}(I)$, for $i=2,3$, $j \geq 1$ has strictly positive rank modulo the maximal ideal of $R$.
\end{conjecture}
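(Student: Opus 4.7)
The plan is to follow the strategy outlined in Remark \ref{remarkschubert}, reducing the problem to the generic case and then exploiting a conjectural identification with Schubert varieties. First, by the universal property of $\hat R_{gen}$ and Theorem \ref{equivariantrelations}, the vanishing or non-vanishing modulo $\m$ of the structure maps $w^{(i)}_{j,k}(I)$ for a perfect ideal $I$ of Dynkin format $\mathfrak{f}$ is controlled by their behavior in the generic complex over $\hat R_{gen}$ modulo the maximal ideal corresponding to the generic perfect ideal of that format. So it suffices to produce, for each Dynkin format, a single perfect ideal in some Gorenstein local ring whose top structure maps in $W(d_2)$ and $W(d_3)$ have units among their matrix entries; such a witness automatically specializes to show that the generic rank modulo $\m$ is positive, and hence remains positive for every specialization.

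Next, I would invoke the conjectural description from \cite{SW21}: the generic perfect ideal of each Dynkin format coincides with the defining ideal of a Schubert subvariety inside a suitable homogeneous space. Assuming this, the complex $\FF^{top}_\bullet$ associated to the top components of the critical representations can be computed on the coordinate ring of the Schubert variety; a carefully chosen linear change of the defect variables renders $\FF^{top}_\bullet$ split exact. Once $\FF^{top}_\bullet$ is split exact, its differentials, which by construction are the generically lifted top structure maps $w^{(2)}_{top}$ and $w^{(3)}_{top}$, must have maximal rank, in particular nonzero modulo $\m$. Specializing from the Schubert model to an arbitrary perfect ideal of the same format preserves at least one unit entry, completing the argument.

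The main obstacle is of course the Schubert variety identification itself, which is established only for formats of type $D_n$ \cite{GW20} and $E_6$ \cite{CKLW}, \cite{kustin}, but remains open for $E_7$ and $E_8$, where the number of generators and the combinatorial complexity of the defining equations explode. For these hard cases I would attempt two complementary routes. The first is a case-by-case representation-theoretic computation: analyze the action of the associated Kac--Moody Lie algebra on the top graded component of $W(d_2)\oplus W(d_3)$, and show that the $\prod GL(F_i)$-equivariant structure forces a unit entry whenever the complex is acyclic with the prescribed format. The second route is to prove the contrapositive via iterated linkage, generalizing the argument in Theorem \ref{licci1562}: if all $w^{(i)}_{j,k}(I)$ with $i=2,3$ vanish mod $\m$, then by Theorems \ref{w3,2link}, \ref{w2,2link}, and \ref{w3,3link} (and their expected analogues for the larger formats, conjectured in the display after Remark \ref{remarkcycles}) every minimal link $J$ of $I$ again has all such maps vanishing mod $\m$, contradicting the licci property predicted by part (2) of Conjecture \ref{licciconjecture}.
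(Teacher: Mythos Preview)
The statement is Conjecture \ref{conj2}, which the paper explicitly leaves open; there is no proof in the paper to compare against. The surrounding discussion (Remark \ref{remarkschubert}, the paragraphs on $U_{CM}=U_{split}$ and the Peskine--Szpiro question) records evidence and heuristics, but nowhere claims a proof.

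Your proposal contains a genuine logical gap in the reduction step. You assert that exhibiting a single perfect ideal of format $\mathfrak{f}$ whose top structure maps have a unit entry ``automatically specializes to show that the generic rank modulo $\m$ is positive, and hence remains positive for every specialization.'' The direction is wrong: specialization can only lower ranks. A unit entry for the generic perfect ideal, or for the Schubert-variety model, may land in $\m$ after the map $\hat R_{gen}\to R$ associated to an arbitrary perfect ideal $I$; whether it survives as a unit depends entirely on the target. Theorem \ref{equivariantrelations} does not help here: it transfers \emph{equivariant identities} from the split complex to $\hat R_{gen}$, not nonvanishing conditions modulo a maximal ideal. The sentence ``Specializing from the Schubert model to an arbitrary perfect ideal of the same format preserves at least one unit entry'' is exactly the unproved content of the conjecture.

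Your fallback routes are either conditional on other open conjectures or circular with the paper's program. The Schubert identification from \cite{SW21} is itself conjectural for $E_7$ and $E_8$, as you note. The contrapositive-via-linkage argument assumes that every perfect ideal of Dynkin type is licci, i.e.\ Conjecture \ref{licciconjecture}(2), but the paper's intent is precisely the reverse implication: the sentence immediately after Conjecture \ref{conj2} says that if Conjecture \ref{conj2} holds then the licci property follows (via Theorem \ref{licci1562} and its expected analogues). So invoking licciness to prove Conjecture \ref{conj2} closes the loop in the wrong direction. What you have written is a fair summary of the conjectural landscape, but not a proof.
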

If Conjecture \ref{conj2} is true, then all perfect ideals of format $(1,5,6,2)$ are licci by Theorem \ref{licci1562}. By analogous arguments we expect also all the perfect ideals of other Dynkin formats to be licci.

 This conjecture is true if for every perfect ideal of Dynkin type, the complex $\FF^{top}_{\bullet}$ is split exact.
This last fact has an important relation with an old question posed by Peskine and Szpiro, which is now included among the unsolved homological conjectures, cf. \cite[Section 8]{homconj}, \cite{Peskine-Szpiro-dimension}, \cite{Peskine-Szpiro-4pages}. 

The question is the following:  let $R$ be a local ring and $M, N$ be finitely
generated $R$-modules such that $M$ has finite projective dimension and $l(M \otimes N) < \infty$, is
$\dim(M) + \dim(N) \leq \dim(R)$?

To explain the relation with this question we first need to recall the definition of two important open subsets of the spectrum of the generic ring ${\hat R}_{gen}$. Denote by $U_{CM}$ the set of all prime ideals of ${\hat R}_{gen}$ for which 
the localization of the generic homology module $H_0 (\FF^{gen}_\bullet)$ is perfect.
The open set $U_{split}$ consists of the points for which the complex $\FF^{top}_\bullet$ is split exact. These two sets are conjecturally equal for all Dynkin formats. The equality has been proved for $D_n$ formats in \cite{CVW18} and \cite{GW20}. Using the same method, with the help of computer algebra softwares to compute all the required formulas, we expect this equality to hold also for $E_6$. 

Let now $J$ be a perfect ideal of height 3 and of Dynkin type in a local ring $S$ 
and assume $U_{CM}= U_{split}$. Going modulo a regular sequence, passing to completion, and adding free variables to the generic ring $R:= {\hat R}_{gen}$ we can assume that $\dim(S)=3$ and the homomorphism $\phi: {\hat R}_{gen} \to S$ is surjective. Let $P$ be a prime ideal of ${\hat R}_{gen}$ in the preimage of the maximal ideal of $S$. Call $I:= I_{gen}$ the ideal resolved by $\FF^{gen}_\bullet$.
If the complex $\FF^{top}_{\bullet}$ associated to the free resolution of $J$ is not split exact, then since $U_{CM}= U_{split}$ we get that ht$(I_P)=2$. Using that $R$ and its localizations are Cohen-Macaulay (see \cite{W18}), we get that the pair $M:= \frac{R_P}{I_P}$, $N:=S$ over the ring $R_P$ would provide a counterexample to Peskine-Szpiro question.



\section{Computation over a split exact complex}

In this section we exhibit some of the formulas for the higher structure maps over a split exact complex of length 3 of arbitrary format. All maps are computed with generic liftings. For this we introduce new sets of indeterminates, called \it defect variables \rm and use them to parametrize generically the kernels of the maps along we lift. This procedure as been already described in \cite{GW20} for split exact complexes of format $(1,n,n,1)$ and $(1,4, m+3, m)$. After writing down the formulas, we prove several equivariant relations between the maps $w^{(i)}_j$. As a consequence of Theorem \ref{equivariantrelations} these relations hold in general (for any format for which the maps are well-defined). 

The required computations get quickly very long and technical. For this reason, after identifying the correct patterns and listing the opportune definitions and formulas, we add explicit proof only of some of the first relations. This gives a precise idea of the general method that can be used to check the validity of the more complicated relations following the same pattern.

 We computed the more complicated structure maps over a split exact complex and checked all the relations stated in this section using the help of the computer algebra system Macaulay2. The results of these computations are available online on GitHub, see \cite{xianglong}.
 
 \medskip 

Let us work over a Noetherian ring $R$. Let $r \geq 4$ be an integer.
Consider the split exact complex 
\begin{equation}
\label{basecomplex}
\FF: 0 \longrightarrow F_3 \buildrel{d_3}\over\longrightarrow  F_2 \buildrel{d_2}\over\longrightarrow F_1 \buildrel{d_1}\over\longrightarrow R 
\end{equation}
on the free $R$-modules $F_1$,$F_2$,$F_3$ having bases $ \lbrace e_1, \ldots, e_r \rbrace$, $ \lbrace f_1, \ldots, f_{r+m-1} \rbrace$, $ \lbrace g_1, \ldots, g_m \rbrace$. Denote the dual basis by $ \lbrace \epsilon_1, \ldots, \epsilon_{r} \rbrace$, $ \lbrace \phi_1, \ldots, \phi_{r+m-1} \rbrace$, $ \lbrace \gamma_1, \ldots, \gamma_m \rbrace$.

The differentials are defined by imposing $d_1(e_r)=1 $; $d_1(e_i)= 0$ and $ d_2(f_i)= e_i $ for $i < r$; $d_2(f_i)= 0$ for $i \geq r$; $d_3(g_i)= f_{i+r-1}$.

Let us construct a polynomial ring over $R$ adding the so-called defect variables.
Let $ b_{ij}^u$ be indeterminates over the ring $R$ defined for any $1 \leq i, j \leq r$, $1 \leq u \leq m$ and satisfying the relation $ b_{ij}^r=-b_{ji}^r$. 
Similarly, let $c_{i_1i_2i_3i_4}^{ut}$ be indeterminates over $R$ defined for any $1 \leq i_1,i_2,i_3,i_4 \leq r$, $1 \leq u,t \leq m$ and satisfying skew-symmetric relations in $i_1,i_2,i_3,i_4$ and in $u,t$. 
 These indeterminates are used to compute the maps $w^{(3)}_1$, $w^{(3)}_2$ in a generic way, expressing all the possible liftings. 
 To compute higher maps in the critical representation $W(d_3)$, new more sets of defect variables need to be introduced. We do not provide explicit formulas for those maps here in the paper, referring the reader to the Macaulay2 computation in \cite{xianglong}. However, notice that for the format $E_6$ the only needed sets of defect variables are $ b_{ij}^u$ and $c_{i_1i_2i_3i_4}^{ut}$. 
 
From now on we denote by $v^{(i)}_j$ the map obtained over the complex $\FF$ by computing the corresponding $w^{(i)}_j$ with a generic lifting. As example, the multiplication $e_1^.e_2$ in $\FF$ can be chosen to be equal to $0 + \beta$ where $\beta$ is any element of the kernel of $d_2$ (that is equal to the image of $d_3$). To express this generically we set $e_1^.e_2 = 0+ \sum_{u=1}^m b_{12}^u d_3(g_u)$. Similarly we do for the other entries as in \cite{GW20}.

We describe some of the maps $v^{(i)}_j$ over the complex $\FF$ in order to check the relations appearing in Section 3. 
We list some of the entries. Clearly, by permutation of the indices with the usual sign rules one can obtain all the possible entries.
 As in the previous sections $\langle \cdot, \cdot \rangle$ is the evaluation map
and $\delta_{ij}$ denotes the Kronecker delta.
For the basic multiplication we get
$$ \langle e_i^.e_j, \phi_h  \rangle=  \left\{ \begin{array}{ccc} b^{h-r+1}_{ij} &\mbox{if } h \geq r, \\
       -\delta_{hi} &\mbox{ if } h < j = r, \\
       0  &\mbox{ otherwise. } \\
    \end{array}\right.  \quad      
 \langle e_i^.f_h, \gamma_u  \rangle= \left\{ \begin{array}{ccc} -b^u_{ih} &\mbox{if } h < r, \\
       \delta_{h-r+1,u} &\mbox{ if } h \geq r = i, \\
       0  &\mbox{ otherwise. } \\
    \end{array}\right. $$
$$ \langle e_i^.e_j^.e_k, \gamma_u \rangle =  \left\{ \begin{array}{ccc} b^u_{ij} &\mbox{if } k=r, \\
       0  &\mbox{ if } i,j,k < r. \\
    \end{array}\right. $$
 For the second graded component, set $$ P_{i_1,i_2,i_3,i_4}^{ut} := c^{ut}_{i_1,i_2,i_3,i_4} + \frac{1}{2}\sum_{j,k} (-1)^{i+j+1} b^{u}_{i_ji_k}b^{t}_{\hat{i_j}\hat{i_k}}. $$ 
 Assuming $i_1, i_2, i_3 < r$  we get  
$$ \langle v^{(3)}_2(i_1, i_2, i_3, i_4), \phi_h \otimes \gamma_u \rangle = \left\{ \begin{array}{ccc} P_{i_1,i_2,i_3,i_4}^{u,h-r+1} &\mbox{if } h \geq r; \\
      (-1)^{j+k} b_{i_ji_k}^u &\mbox{ if } i_4 = r, h=i_l \mbox{ with } \lbrace j,k,l \rbrace = \lbrace 1,2,3 \rbrace; \\
       0  &\mbox{ otherwise. } \\
    \end{array}\right.
 $$ 
For $v^{(2)}_2$ set $ B^{ut}_{ij,kl}:= b^{u}_{ij}b^{t}_{kl}-b^{u}_{kl}b^{t}_{ij}. $ Then:
 $$ \langle v^{(2)}_2(i_1, i_2, i_3, f_h), \gamma_u \wedge \gamma_t \rangle =  \left\{ \begin{array}{ccc} \dfrac{1}{2}[ B^{ut}_{i_1i_2,i_3h}  - B^{ut}_{i_1i_3,i_2h} + B^{ut}_{i_2i_3,i_1h}] + c^{ut}_{i_1, i_2, i_3, h} &\mbox{if } h < r; \\
      \delta_{h-r+1,t}b_{i_1i_2}^{u}-\delta_{h-r+1,u}b_{i_1i_2}^{t} &\mbox{ if } i_3 = r, h \geq r, \\
       0  &\mbox{ otherwise. } \\
    \end{array}\right. $$

The next series of lemmas describes relations over the complex $\FF$ involving some of the maps $v^{(i)}_j$.

\begin{lem}
\label{split1}
For any choice of indices, equations (\ref{X}) and (\ref{Y}) hold over the complex $\FF$.
\end{lem}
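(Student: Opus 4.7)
The plan is to verify both (\ref{X}) and (\ref{Y}) by direct substitution of the explicit formulas for $v^{(2)}_2$ and $v^{(3)}_2$ in terms of the defect variables $b^u_{ij}$, $c^{ut}_{i_1 i_2 i_3 i_4}$, and $P^{ut}_{i_1 i_2 i_3 i_4}$ listed just before the lemma. First I would unfold the relevant kernels on the split complex: $a_3^* = d_3^*$ sends $\phi_h \mapsto \gamma_{h-r+1}$ when $h \geq r$ and to $0$ otherwise, while $a_2^* = d_2^*$ sends $\epsilon_j \mapsto \phi_j$ for $j < r$ and kills $\epsilon_r$. Hence $\ker(a_3^*) = \bigoplus_{h<r} R\phi_h$ and $\ker(a_2^*) = R\epsilon_r$, so each claimed congruence may be checked after applying $a_3^*$ (resp.\ $a_2^*$).

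For (\ref{X}), applying $a_3^*$ reduces the claim to the identity
$$\sum_{h \geq r} \langle v^{(2)}_2(e_i, e_j, e_k, f_h),\,\gamma_s \wedge \gamma_t\rangle\, \gamma_{h-r+1} \;=\; \langle e_i^. e_j^. e_k, \gamma_s\rangle\,\gamma_t \;-\; \langle e_i^. e_j^. e_k, \gamma_t\rangle\,\gamma_s.$$
By antisymmetry in $(i,j,k)$ only two cases remain: either all three indices lie in $\{1, \dots, r-1\}$, in which case both sides vanish by the base formulas (the $v^{(2)}_2$-formula is zero unless one of the first three arguments equals $r$, and the triple product is zero in this range); or exactly one index, say $k$, equals $r$, in which case the triple product is $b^s_{ij}\gamma_t - b^t_{ij}\gamma_s$ and the left side collapses to the same expression by the $i_3 = r$, $h \geq r$ case of the $v^{(2)}_2$-formula.

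For (\ref{Y}) I would apply $a_2^*$ and test against each $f_h$, which (by the modified definition of $Y$ given in the statement) reduces the claim to the assertion that
$$[h<r]\cdot \langle v^{(3)}_2(e_i,e_j,e_k,e_h),\,\phi_l \otimes \gamma_s\rangle$$
equals the right-hand side of the modified defining relation of $Y$ paired with $f_h$. This becomes a case analysis on how many of $\{i,j,k,h\}$ equal $r$ and on whether $l < r$ or $l \geq r$. The key observation is that the $c$-variables carried by $P^{u, l-r+1}_{i,j,k,h}$ in the $v^{(3)}_2$-formula coincide by construction with the $c$-variables carried by $v^{(2)}_2$, so the verification reduces to checking that the quadratic $b\cdot b$ contribution of $P$ matches the $B$-terms of $v^{(2)}_2$ plus the three pairing terms $\langle e_i^.e_j,\phi_l\rangle\langle e_k^.f_h,\gamma_s\rangle$ and its cyclic analogues. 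A short signed expansion—using the formulas $\langle e_a^. e_b,\phi_l\rangle = b^{l-r+1}_{ab}$ when $l \geq r$ and $= -\delta_{la}$ when $l < r = b$, and analogously for $\langle e_a^. f_h,\gamma_s\rangle$—confirms the match.

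The main obstacle is the sign and index bookkeeping when assembling the $b\cdot b$ contributions in (\ref{Y}), particularly in the mixed subcase $l < r$, $h < r$ where both the $\delta$-part of $\langle e_a^.e_b,\phi_l\rangle$ and the $b$-part of $\langle e_a^.f_h,\gamma_s\rangle$ are in play. Adopting a uniform convention for the four-index $c$-variable (say, always listed in the order $(i_1,i_2,i_3,i_4)$ with signs tracked by permutations) keeps the computation manageable. Beyond this indexing work no genuine polynomial cancellation occurs, so the argument is mechanical once the cases are separated cleanly.
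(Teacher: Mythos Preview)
Your proposal is correct and follows essentially the same approach as the paper: reduce each congruence to an equality by applying $a_3^*$ (resp.\ $a_2^*$), identify the kernels explicitly on the split complex, and then verify case by case according to whether the indices lie below $r$ or equal $r$ and whether $h,l$ lie in the low or high range. The paper carries out exactly this case analysis with the same explicit defect-variable formulas, and your observation that the $c$-contributions in $P^{su}_{ijkh}$ match those in $v^{(2)}_2$ is precisely the mechanism that makes the $l\geq r$ case of (\ref{Y}) go through there as well.
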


\begin{proof}
For (\ref{X}) we have to show that $$a_3^*(X(e_i \wedge e_j \wedge e_k, \gamma_s \wedge \gamma_t)):= \langle e_i e_j e_k, \gamma_s   \rangle \gamma_t -
\langle e_i e_j e_k, \gamma_t   \rangle \gamma_s = a_3^*(\sum_{h=1}^{r+m-1} \langle v^{(2)}_2(i,j,k, f_h),  \gamma_s \wedge \gamma_t \rangle \phi_h). $$
If $i,j,k < r$, then $e_i e_j e_k=0$ and $v^{(2)}_2(i_1, i_2, i_3, f_h) = 0$ when $h \geq r$.
Hence, the sum in the right side term is taken only over $h < r$.  
Since $ a_3^*(\phi_h)= 0$ for each $h < r$, we get the desired equality.

Instead, if $k=r$, we get $\langle e_i e_j e_r, \gamma_s   \rangle \gamma_t -
\langle e_i e_j e_r, \gamma_t   \rangle \gamma_s = b^s_{ij} \gamma_t - b^t_{ij} \gamma_s.    $ Working on the other term, we get $$   \sum_{h \geq r} \langle G_{ij} \wedge g_{h-r+1},  \gamma_s \wedge \gamma_t \rangle a_3^*(\phi_h) = b^s_{ij} \gamma_t - b^t_{ij} \gamma_s,  $$ since $a_3^*(\phi_h)= g_{h-r+1}$.


For (\ref{Y}) first we observe that if $h \geq r$, then for every $\epsilon \in A_1^*$, $\langle f_h, a_2^*(\epsilon)  \rangle=0.$ Thus assume $h < r$. Hence,
$$ \langle f_h, a_2^*(\sum_{t=1}^{r} \langle v^{(3)}_2(e_{i}, e_{j}, e_{k}, e_t ), \phi_l \otimes \gamma_s  \rangle) \epsilon_t)  \rangle = \langle v^{(3)}_2(e_{i}, e_{j}, e_{k}, e_h ), \phi_l \otimes \gamma_s  \rangle =  $$ 
$$  = \left\{ \begin{array}{ccc} P_{ijkh}^{su} &\mbox{if } l \geq r \, (\mbox{here } u= l-r+1), \\
       b^s_{\hat{l}\hat{r}} &\mbox{ if } l \in \{ i,j,h \}, k=r, \\
       0  &\mbox{ otherwise. } \\
    \end{array}\right.  $$
    We need to compare this with the term 
    $$  \langle e_i^.e_j^.e_k, \gamma_s  \rangle \langle f_h, \phi_l \rangle - \langle w^{(2)}_2(e_i \wedge e_j \wedge e_k, f_h),  \gamma_s \wedge a_3^*(\phi_l) \rangle   - \langle e_i^.e_j, \phi_l \rangle \langle e_k^.f_h, \gamma_s \rangle + $$
$$ + \langle e_i^.e_k, \phi_l \rangle \langle e_j^.f_h, \gamma_s \rangle - \langle e_j^.e_k, \phi_l \rangle \langle e_i^.f_h, \gamma_s \rangle. $$
If $l \geq r$, $a_3^*(\phi_l) = \gamma_u$ with $u= l-r+1$. Thus the above term is equal to
$$  \dfrac{1}{2}[ B^{su}_{ij,kh } -B^{su}_{ik,jh } + B^{su}_{jk,ih }] + c^{su}_{ijkh}-  b^{s}_{ij}b^{u}_{kh} + b^{s}_{ik}b^{u}_{jh} - b^{s}_{jk}b^{u}_{ih} = P_{ijkh}^{su}. $$
If $l < r$, then $a_3^*(\phi_l) = 0$. If either $l \neq i,j,k,h$ or $i,j,k < r$ it can be easily check that all the summands above are zero. If $k=r$ (or equivalently if one of $i,j$ is equal to $r$), the only nonzero term is $\langle e_i^.e_j^.e_k, \gamma_s  \rangle \langle f_h, \phi_l \rangle = b_{ij}^s$ if $l=h$, and  $\langle e_i^.e_k, \phi_l \rangle \langle e_j^.f_h, \gamma_s \rangle = b^s_{jh}$ if $l=i$ (or the similar term if $l=j$). 
\end{proof}

\begin{lem}
\label{split2}
For any choice of indices, the equation
$$  \sum_{h=1}^{r+m-1} \langle v^{(2)}_2(i_1,i_2,i_3  \otimes f_h),  \gamma_u \wedge \gamma_t \rangle \cdot \langle e_{j_2}^.e_{j_3}, \phi_h \rangle +\langle v^{(2)}_2(i_1,i_2,j_3  \otimes f_h),  \gamma_u \wedge \gamma_t \rangle \cdot \langle e_{j_2}^.e_{i_3}, \phi_h \rangle + $$ $$
\langle v^{(2)}_2(i_1,j_2,i_3  \otimes f_h),  \gamma_u \wedge \gamma_t \rangle \cdot \langle e_{i_2}^.e_{j_3}, \phi_h \rangle + \langle v^{(2)}_2(i_1,j_2,j_3  \otimes f_h),  \gamma_u \wedge \gamma_t \rangle \cdot \langle e_{i_2}^.e_{i_3}, \phi_h \rangle=0  $$
holds over the complex $\FF$.
\end{lem}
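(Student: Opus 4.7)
The plan is to verify the stated identity as a polynomial relation in the defect variables $b^u_{ij}$ and $c^{ut}_{i_1 i_2 i_3 i_4}$, by direct substitution of the formulas for $\langle v^{(2)}_2(i_1,i_2,i_3 \otimes f_h),\gamma_u\wedge\gamma_t\rangle$ and $\langle e_a\cdot e_b,\phi_h\rangle$ given earlier in this section.

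First I would split the sum $\sum_{h=1}^{r+m-1}$ into the two ranges $h<r$ and $h\geq r$, corresponding to the two active branches of the piecewise formula for $v^{(2)}_2(\cdot \otimes f_h)$. In the range $h<r$ the factor $\langle e_a\cdot e_b,\phi_h\rangle$ is supported only when one of $a,b$ equals $r$; in the range $h\geq r$ the factor $\langle v^{(2)}_2(i_1,i_2,i_3 \otimes f_h),\gamma_u\wedge\gamma_t\rangle$ is supported only when, after using the Schur antisymmetry of $v^{(2)}_2$ in its first three slots to move a potential $r$ into the third position, one of $i_1,i_2,i_3$ equals $r$. Consequently the sum vanishes trivially whenever none of the five free indices $i_1,i_2,i_3,j_2,j_3$ coincides with $r$.

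Next I would proceed by a case analysis on which subset of $\{i_1,i_2,i_3,j_2,j_3\}$ is equal to $r$, using Schur antisymmetry of $v^{(2)}_2$ (and antisymmetry of $\langle e_a\cdot e_b,\phi_h\rangle$ in $a,b$) to reduce to a handful of canonical configurations. In each case, substitute the explicit formulas and expand every $B^{ut}_{ab,cd}=b^u_{ab}b^t_{cd}-b^u_{cd}b^t_{ab}$. The $c^{ut}$-contributions, which appear only in the $h<r$ part through the term $c^{ut}_{i_1 i_2 i_3 h}$, cancel in pairs across the four summands of the lemma by the full antisymmetry $c^{ut}_{\sigma(1)\sigma(2)\sigma(3)\sigma(4)}=\mathrm{sgn}(\sigma)c^{ut}_{i_1 i_2 i_3 i_4}$; in particular, any summand containing a repeated index vanishes automatically. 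The remaining quadratic monomials $b^u_{\bullet\bullet} b^t_{\bullet\bullet}$ coming from the $h<r$ contribution are then seen to cancel, after repeated application of $b^u_{ij}=-b^u_{ji}$ and of $B^{ut}_{ab,cd}=-B^{ut}_{cd,ab}$, against the analogous monomials arising from the $h\geq r$ contribution, which have the form $(\delta_{v,t}b^u_{\bullet\bullet}-\delta_{v,u}b^t_{\bullet\bullet})\,b^v_{\bullet\bullet}$ with $v=h-r+1$.

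The main obstacle is purely combinatorial: the case analysis produces several subcases, each yielding many $b^u b^t$ monomials whose pairwise matching between the two $h$-ranges must be tracked with care through the sign rules for $B^{ut}$ and the reindexings induced by Schur antisymmetry. This is the reason the authors implement the full verification in a computer algebra system rather than presenting it by hand; the Macaulay2 script carrying it out is available in \cite{xianglong}.
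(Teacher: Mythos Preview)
Your approach is essentially identical to the paper's: split $h<r$ versus $h\geq r$, observe the vanishing when no index equals $r$, then run a case analysis on which of $i_1,i_2,i_3,j_2,j_3$ equals $r$ and cancel the $c^{ut}$ and $B^{ut}$ terms by antisymmetry. One minor inaccuracy: this particular lemma is \emph{not} deferred to Macaulay2 in the paper---the authors work out the three representative cases ($i_1=r$; $i_3=r$; $i_2=j_2=r$) explicitly by hand, reserving computer verification for the longer relations in later lemmas.
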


\begin{proof}
We first observe that, 
if $i,j < r$, then $e_i^.e_j \in d_3(F_3)= \langle f_r, \ldots, f_{r+m-1} \rangle$. Thus $\langle e_{i}^.e_{j}, \phi_h \rangle=0$ for every $h < r$. By this, whenever all the indices $i_1,i_2,i_3,j_2,j_3 < r$ each of the sums in the above terms is taken over $h \geq r$, and hence those terms are equal to zero. 

Also observe that if $j,k < r$, then 
$ \sum_{h=1}^{r+m-1} \langle v^{(2)}_2(i_2,i_3,r  \otimes f_h),  \gamma_u \wedge \gamma_t \rangle \cdot \langle e_{j},e_{k}, \phi_h \rangle = \sum_{h \geq r} \langle v^{(2)}_2(i_2,i_3,r  \otimes f_h),  \gamma_u \wedge \gamma_t \rangle \cdot b_{jk}^{h-r+1} = b_{i_2i_3}^{t}b_{jk}^{u} - b_{i_2i_3}^{u}b_{jk}^{t} = B^{tu}_{i_2i_3,jr}. $

Consider the case when $i_1=r$. We can restrict to assume that all the other indices are strictly smaller. 
Our term becomes $B^{tu}_{i_2i_3,j_2j_3}+B^{tu}_{i_2j_3,j_2i_3} + B^{tu}_{j_2i_3,i_2j_3}+B^{tu}_{j_2j_3,i_2i_3} =0$.

Next suppose $i_3=r$ and all the other indices to be strictly smaller. Observing that for $h < r$, $ \langle e_{j}^.e_{r}, \phi_h \rangle = -\delta_{jh}$, our term 
becomes 
$$ B^{tu}_{i_1i_2,j_3j_2} + B^{tu}_{i_1j_2,j_3i_2} + \dfrac{1}{2}[B^{ut}_{i_1i_2,j_3j_2 } -B^{ut}_{i_1j_3,i_2j_2 } + B^{ut}_{i_2j_3,i_1j_2 }] + $$ $$ + \dfrac{1}{2}[B^{ut}_{i_1j_2,j_3i_2 } - B^{ut}_{i_1j_3,j_2i_2 } +B^{ut}_{j_2j_3,i_1i_2 }] + c_{i_1i_2j_3j_2} + c_{i_1j_2j_3i_2} = 0.  $$
We finally need to consider the case $i_2 = j_2=  r$ and all other indices smaller.
This gives $\dfrac{1}{2}[B^{ut}_{i_1r,i_3j_3 } -B^{ut}_{i_1i_3,rj_3 } + B^{ut}_{ri_3,i_1j_3 }]+ B^{tu}_{i_1i_3,rj_3 } + \dfrac{1}{2}[B^{ut}_{i_1r,j_3i_3 } -B^{ut}_{i_1j_3,ri_3 } + B^{ut}_{rj_3,i_1i_3 }]+ B^{tu}_{i_1j_3,ri_3 } = 0.$
\end{proof}


The next lemma describes quadratic relations between $W(d_1)$ and $W(d_2)$. We expect a general relation of the form $\sum_{j=0}^k (-1)^j v^{(1)}_j v^{(2)}_{k-j} = 0 $ to be satisfied for any $k$. Here we consider $k=1,2,3$.
 We give here an explicit formula for $v^{(1)}_{2,1}$ computed using (\ref{q1,2eq}). Those for $ v^{(1)}_{3,1}$ and $ v^{(2)}_{3,1}$ can be obtained by computer using (\ref{q1,3eq}), (\ref{q2,3eq}).
 Let us again adopt the notation $\varepsilon_{i_1,\ldots, i_s} := e_{i_1} \wedge \ldots \wedge e_{i_s}$.
Then 
 $$ \langle v^{(1)}_{2,1}(\varepsilon_{i_1, i_2, i_3, i_4} \otimes e_{i_5}), \gamma_u \wedge \gamma_t \rangle
=  \left\{ \begin{array}{ccc} \dfrac{1}{2}[ B^{ut}_{i_1i_2,i_3i_5}  - B^{ut}_{i_1i_3,i_2i_5} + B^{ut}_{i_2i_3,i_1i_5}] - c^{ut}_{i_1, i_2, i_3, i_5} &\mbox{if } i_4 = r; \\
       0  &\mbox{ if } i_1,i_2,i_3,i_4 < r. 
    \end{array}\right. $$
       


\begin{lem}
\label{splitw1,1}
Denote by $y_{ij}$ the entries of $d_2$ and by $x_1, \ldots, x_r$ the entries of $d_1$.
The following relations hold over the complex $\FF$, for any choice of indices such that $k_4, k_5 \in \langle k_1, k_2, k_3 \rangle$.
$$  (W11,1): \sum_{i=1}^{r} y_{ih} \langle e_j^.e_k^.e_i, \gamma_t \rangle  = x_k \langle e_j^.f_h, \gamma_t \rangle - x_j \langle e_k^.f_h, \gamma_t \rangle. $$
$$  (W11,2): \sum_{i=1}^{r} y_{ih} \langle v^{(1)}_{2,1}(\varepsilon_{i, k_1, k_2, k_3} \otimes e_{k_4}), \gamma_u \wedge \gamma_t \rangle = \langle e_{k_1}^.e_{k_2}^.e_{k_3}, \gamma_u \rangle \cdot \langle e_{k_4}^.f_h, \gamma_t \rangle + $$
$$ -\langle e_{k_1}^.e_{k_2}^.e_{k_3}, \gamma_t \rangle \cdot \langle e_{k_4}^.f_h, \gamma_u \rangle - x_{k_1} \langle v^{(2)}_2( \varepsilon_{k_2, k_3, k_4} \otimes f_h  ), \gamma_u \wedge \gamma_t \rangle + $$ 
$$ + x_{k_2} \langle v^{(2)}_2( \varepsilon_{k_1, k_3, k_4} \otimes f_h  ), \gamma_u \wedge \gamma_t \rangle - x_{k_3} \langle v^{(2)}_2( \varepsilon_{k_1, k_2, k_4} \otimes f_h  ), \gamma_u \wedge \gamma_t \rangle.  $$
$$  (W11,3):  \sum_{i=1}^{r} y_{ih} \langle v^{(1)}_{3,1}(\varepsilon_{i, k_1, k_2, k_3} \otimes e_{k_4} \wedge e_{k_5} \wedge e_{k_6}), \gamma_{u_1} \wedge \gamma_{u_2} \wedge \gamma_{u_3} \rangle = $$ 
$$ \sum_{i=1}^3(-1)^{i} \langle e_{k_1}^.e_{k_2}^.e_{k_3}, \gamma_{u_i}  \rangle \cdot \langle v^{(2)}_2(e_{k_4}, e_{k_5}, e_{k_6} \otimes f_h), \gamma_{u_{j_1}} \wedge \gamma_{u_{j_2}} \rangle +   $$
$$ +\sum_{ j,l,r \in \lbrace 1,2,3 \rbrace } (-1)^{\sigma(j,l,r)} x_{k_j}  \langle v^{(2)}_{3,1}(\varepsilon_{k_l, k_4, k_5, k_6} \otimes e_{k_r} \otimes f_h), \gamma_{u_1} \wedge \gamma_{u_2} \wedge \gamma_{u_3} \rangle.     $$
\end{lem}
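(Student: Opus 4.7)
\medskip

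\noindent\textbf{Proof proposal.} All three identities are verified by direct substitution of the explicit formulas recorded earlier in this section for the generic liftings $v^{(i)}_j$ on $\FF$. On the split exact complex, the matrix of $d_2$ satisfies $y_{ih}=\delta_{ih}$ for $h<r$ and $y_{ih}=0$ for $h\geq r$, which collapses each sum over $i$ on the LHS to at most one term. By Theorem \ref{equivariantrelations}, it suffices to match the resulting polynomial expressions in the defect variables $b^u_{ij}$, $c^{ut}_{i_1 i_2 i_3 i_4}$ (and, for the higher maps entering (W11,3), in the additional sets of defect variables used in \cite{xianglong}).

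For (W11,1), after applying the collapse the LHS equals $\langle e_j^.e_k^.e_h,\gamma_t\rangle$ when $h<r$ and $0$ when $h\geq r$. Using the explicit formula for $\langle e_i^.e_j^.e_k,\gamma_u\rangle$ together with the rules $\langle e_i^.f_h,\gamma_u\rangle=-b^u_{ih}$ for $h<r$ and $\langle e_r^.f_h,\gamma_u\rangle=\delta_{h-r+1,u}$ for $h\geq r$, the identity reduces, in each case according to which (if any) of $j,k,h$ equals $r$, to an elementary relation among single $b$'s; each case is matched by inspection, exactly in the spirit of Lemma \ref{split1}.

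For (W11,2), I would substitute the formula for $v^{(1)}_{2,1}$ displayed above (which vanishes unless $i_4=r$ and otherwise contributes the combination of $B^{ut}$'s minus $c^{ut}_{i_1 i_2 i_3 i_5}$) together with the formula for $v^{(2)}_2$. The $c^{ut}$-contributions on the two sides appear with opposite signs and cancel, reducing the identity to a quadratic identity in the $b^u_{ij}$'s. This is then verified by the same case split on which of $k_1,\dots,k_4$ equals $r$ that was carried out in the proof of Lemma \ref{split2}; the bookkeeping is longer but no new phenomenon appears.

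For (W11,3), the maps $v^{(1)}_{3,1}$ and $v^{(2)}_{3,1}$ are obtained by lifting the cycles $q^{(1)}_{3,1}$ and $q^{(2)}_{3,1}$ of \eqref{q1,3eq} and \eqref{q2,3eq}, which introduces further sets of defect variables parametrizing the kernels of the higher liftings. The required equality then becomes a polynomial identity in all of these defect variables, and the number of monomials to track grows beyond what can reasonably be done by hand. I would accordingly verify it by computer, using the Macaulay2 scripts in \cite{xianglong}: set up $\FF$ with all defect variables, compute $v^{(1)}_{3,1}$, $v^{(2)}_{3,1}$, and $v^{(2)}_2$ generically, and compare the two sides of (W11,3) monomial by monomial. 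The main obstacle throughout is purely combinatorial; the content of the lemma is really the cancellation of all higher defect variables between the two sides, and once the formulas are encoded symbolically the verification is mechanical.
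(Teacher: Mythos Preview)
Your proposal is correct and follows essentially the same approach as the paper's own proof: collapse the sum using $y_{ih}=\delta_{ih}$ for $h<r$ (and $x_k=\delta_{kr}$), verify (W11,1) and (W11,2) by an explicit case analysis on which indices equal $r$ and direct comparison of the resulting polynomial expressions in the $b$'s and $c$'s, and defer (W11,3) to a computer check via the Macaulay2 scripts. The paper carries out the (W11,2) comparison slightly more explicitly (writing out both sides in the main case $k_3=r$ and invoking the skew-symmetry of $c^{ut}_{i_1i_2i_3i_4}$ to match the $c$-terms), but the content is the same.
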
 
 \begin{proof}
Notice that $y_{ih} = 1$ if and only if $i=h < r$ and $x_k= 1$ if and only if $k=r$, otherwise they are zero.

Hence, for $(W11,1)$, if either $j,k < r$ or if $h \geq r$ both sides are zero. Clearly also if $j=k$ both sides are zero. Thus suppose $j,h < r$, $k = r$. In this case both terms are equal to $b^t_{hj}$. 

Also for $(W11,2)$, the left side term is nonzero only if $i=h < r$ and one of $k_1,k_2,k_3$ is equal to $r$. Say that $k_3=r$.
Then the left side term is 
$$ \langle v^{(2)}_{2,1}(\varepsilon_{h, k_1, k_2, r} \otimes e_{k_4}), \gamma_u \wedge \gamma_t \rangle = \dfrac{1}{2}[ B^{ut}_{hk_1,k_2k_4}  - B^{ut}_{hk_2,k_1k_4} + B^{ut}_{k_1k_2,hk_4}] - c^{ut}_{h, k_1, k_2, k_4}. $$
In the analogous case, for the right side term we get
$$ b_{k_1k_2}^{t} b_{k_4h}^{u} - b_{k_1k_2}^{u} b_{k_4h}^{t} + \dfrac{1}{2}[ B^{ut}_{k_1k_2,k_4h}  - B^{ut}_{k_1k_4, k_2h} + B^{ut}_{k_2k_4,k_1h}] + c^{ut}_{k_1, k_2, k_4, h}. $$
These terms checks out to be equal by skew-symmetric properties of the indices.
It is not hard to check that the right side term is zero in all the other cases.
The relation $(W11,3)$ can be checked by computer or using similar methods. 
\end{proof}


 The next lemma deals with quadratic relations in $W(d_2)$ of the form $\sum_{j=0}^k (-1)^j v^{(2)}_j v^{(2)}_{k-j} = 0 $.
 The map $v^{(2)}_{4,1}$ can be computed using (\ref{q2,4eq}).   

\begin{lem}
\label{splitw3,2}
Denote by $y_{ij}$ the entries of $d_2$. Set $\vartheta_{ij}^h:= \langle w^{(2)}_2(e_{k_1}, e_{k_2}, e_{k_3}) \otimes f_h),  \gamma_{u_i} \wedge \gamma_{u_j} \rangle$.
The following relations hold over the complex $\FF$, for any choice of indices such that $k_4, k_5, k_6 \in \langle k_1, k_2, k_3 \rangle$.
$$  (W32,1): \sum_{i=1}^{r} \langle e_i^.f_h,  \gamma_t \rangle y_{ik} + \langle e_i^.f_k,  \gamma_t \rangle y_{ih}= 0. $$
$$  (W32,2): \sum_{i=1}^{r} \langle v^{(2)}_2(e_i, e_j, e_l \otimes f_h),  \gamma_t \wedge \gamma_u \rangle y_{ik} + \langle v^{(2)}_2(e_i, e_j, e_l \otimes f_k),  \gamma_t \wedge \gamma_u \rangle y_{ih} =  $$
$$ \langle e_j^.f_h,  \gamma_t \rangle \cdot  \langle e_l^.f_k,  \gamma_u \rangle  - \langle e_j^.f_h,  \gamma_u \rangle \cdot  \langle e_l^.f_k,  \gamma_t \rangle + \langle e_j^.f_k,  \gamma_t \rangle \cdot  \langle e_l^.f_h,  \gamma_u \rangle  - \langle e_j^.f_k,  \gamma_u \rangle \cdot  \langle e_l^.f_h,  \gamma_t \rangle. $$ 
$$  (W32,3):  \sum_{i=1}^{r} \langle v^{(2)}_{3,1}(\varepsilon_{i,k_1,k_2,k_3} \otimes e_{k_1} \otimes f_h), \wedge_{s=1}^3 \gamma_{u_s} \rangle y_{ik} + \langle v^{(2)}_{3,1}(\varepsilon_{i,k_1,k_2,k_3} \otimes e_{k_1} \otimes f_k), \wedge_{s=1}^3 \gamma_{u_s} \rangle y_{ih} = $$
$$ -\vartheta_{12}^h \cdot \langle e_{k_1}^.f_k,  \gamma_{u_3} \rangle - \vartheta_{12}^k \cdot \langle e_{k_1}^.f_h,  \gamma_{u_3} \rangle + \vartheta_{13}^h \cdot \langle e_{k_1}^.f_k,  \gamma_{u_2} \rangle + \vartheta_{13}^k \cdot \langle e_{k_1}^.f_h,  \gamma_{u_2} \rangle + $$  $$ - \vartheta_{23}^h \cdot \langle e_{k_1}^.f_k,  \gamma_{u_1} \rangle- \vartheta_{23}^k \cdot \langle e_{k_1}^.f_h,  \gamma_{u_1} \rangle.  $$
$$  (W32,4):  \sum_{i=1}^{r} \langle v^{(2)}_{4,1}(\varepsilon_{i,k_1,k_2,k_3} \otimes \varepsilon_{k_1,k_2,k_3} \otimes f_h), \wedge_{s=1}^4 \gamma_{u_s} \rangle y_{ik} + \langle v^{(2)}_{4,1}(\varepsilon_{i,k_1,k_2,k_3} \otimes \varepsilon_{k_1,k_2,k_3} \otimes f_k), \wedge_{s=1}^4 \gamma_{u_s} \rangle y_{ih}=  $$
$$  \vartheta^h_{12} \cdot \vartheta^k_{34} - \vartheta^h_{13} \cdot \vartheta^k_{24} + \vartheta^h_{14} \cdot \vartheta^k_{23} + \vartheta^h_{23} \cdot \vartheta^k_{14} - \vartheta^h_{24} \cdot \vartheta^k_{13} + \vartheta^h_{34} \cdot \vartheta^k_{12}.   $$
\end{lem}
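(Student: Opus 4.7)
The four identities (W32,1)--(W32,4) share a common shape: an expression $\sum_{i} X_{h,i}\, y_{ik} + X_{k,i}\, y_{ih}$ for some higher structure map $X$, set equal to a quadratic polynomial in lower-order structure maps. The plan is to verify each identity by substituting the explicit formulas for the maps on the split exact complex $\FF$ from the first part of Section 5 and comparing coefficients of the defect variables, exactly in the style of Lemmas \ref{split1}, \ref{split2}, and \ref{splitw1,1}.

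The simplification that drives everything is that $y_{ij}=\delta_{ij}$ when $j<r$ and $y_{ij}=0$ when $j\geq r$. Consequently every sum $\sum_i(\cdot)\,y_{ih}$ collapses to a single term (the $i=h$ contribution) when $h<r$ and vanishes when $h\geq r$. Each identity is then reduced to a finite case analysis on the location of the indices $h,k,j,l,k_s,u_s$ relative to the threshold $r$, together with matching the piecewise branches of the formulas for $v^{(2)}_2$, $v^{(2)}_{3,1}$, and $v^{(2)}_{4,1}$ given earlier in the section.

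For (W32,1) the collapse reduces the claim to $\langle e_k\cdot f_h,\gamma_t\rangle+\langle e_h\cdot f_k,\gamma_t\rangle=0$ in the regime $h,k<r$, which holds because the pairing equals $-b^t_{\bullet\bullet}$ and the $b^u_{ij}$ are antisymmetric in $(i,j)$; the remaining regimes ($h$ or $k\geq r$) give zero on both sides directly from the piecewise definitions. For (W32,2), substituting the $B$-plus-$c$ formula into the collapsed left-hand side produces the two $c$-terms $c^{tu}_{k,j,l,h}+c^{tu}_{h,j,l,k}$, which cancel by the skew-symmetry of $c$ in its four lower indices; what remains is a polynomial in the $b$'s that reorganizes, after expanding the $B^{tu}$'s, into the right-hand side. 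The boundary subcases involving an index equal to $r$ are handled by the second branch of the $v^{(2)}_2$ formula and amount to checks of the form $B^{tu}_{ij,kh}=b^t_{ij}b^u_{kh}-b^u_{ij}b^t_{kh}$.

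The main obstacle is (W32,3) and (W32,4), which involve the higher maps $v^{(2)}_{3,1}$ and $v^{(2)}_{4,1}$ defined via (\ref{q2,3eq})--(\ref{q2,4eq}). These require additional families of defect variables to parametrize the lifting ambiguity beyond the $b$ and $c$ already introduced, and after expansion each identity becomes a very large polynomial equality in these variables. The strategy remains the same --- collapse the $y$-sums, expand both sides, cancel the higher defect variables using the relevant skew-symmetries, and match the remaining $b$-polynomial expressions --- but the bookkeeping of signs coming from the antisymmetrizations across $\bigwedge^5 F_1$, $\bigwedge^2 F_3$, $\bigwedge^3 F_3$, and $\bigwedge^4 F_3$ makes a pen-and-paper verification prohibitive. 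Consistent with the methodology announced at the start of Section 5, I would delegate these two identities to the Macaulay2 scripts of \cite{xianglong}, which build $\FF$ over a polynomial ring in all required defect variables and verify each equality by direct polynomial comparison.
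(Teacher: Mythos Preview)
Your proposal is correct and follows essentially the same approach as the paper: collapse the $y$-sums via $y_{ik}=\delta_{ik}$ for $k<r$, do a short case analysis for (W32,1) and (W32,2) exploiting the antisymmetry of the $b^u_{ij}$ and of the $c^{ut}_{i_1i_2i_3i_4}$, and defer (W32,3)--(W32,4) to the Macaulay2 verification in \cite{xianglong}. The only minor difference is that the paper's treatment of (W32,2) spells out the boundary subcase $h<r$, $k\geq r$, $l=r$ a bit more explicitly, while you summarize it; this is cosmetic.
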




\begin{proof}
Notice that $y_{ik} = 1$ if and only if $i=k < r$, otherwise is zero. Also recall that $e_i^.f_k = 0$
 if $i < r$ and $h \geq r$.
For $(W32,1)$, if $h,k < r$ we get 
$ \langle e_k^.f_h,  \gamma_t \rangle + \langle e_h^.f_k,  \gamma_t \rangle = -b_{kh}^t - b_{hk}^t = 0. $ 
In all the other cases any summand is clearly zero.

For $(W32,2)$, if $j=l$ everything is obviously zero. Hence, also if $h,k \geq r$, both terms are zero. If $h < r$ and $k \geq r$, we again get that both terms are zero if $j,l < r$ or if $u,t \neq k-r+1$. Hence assume $l=r$ and $u= k-r+1$. Thus the first term is 
$ \langle v^{(2)}_2(e_h, e_j, e_r \otimes f_k),  \gamma_t \wedge \gamma_u \rangle = b^t_{hj}.  $ The second term becomes 
$    \langle e_j^.f_h,  \gamma_t \rangle \cdot  \langle e_r^.f_k,  \gamma_u \rangle = -b^t_{jh}. $ Finally suppose $h,k < r$. The first term becomes 
$$ \dfrac{1}{2}[ B^{tu}_{kj,lh}- B^{tu}_{kl,jh} + B^{tu}_{jl,kh}   ] + c_{kjlh} + \dfrac{1}{2}[ B^{tu}_{hj,lk}- B^{tu}_{hl,jk} + B^{tu}_{jl,hk}   ] + c_{hjlk} = B^{tu}_{kj,lh}- B^{tu}_{kl,jh}. $$
The second term coincides with the first one since, if $i,k < r$, $\langle e_i^.f_k, \gamma_t \rangle = -b_{ih}^t.$ 
Relations $(W32,3), (W32,4)$ are checked by computer. 
\end{proof}

Now we consider quadratic relations between all the critical representations having form $\sum_{j=0}^k (-1)^j v^{(2)}_j(\rho) v^{(3)}_{k-j}(h) = \delta_{\rho h}w_{k-1}^{(1)} $. For $k=1,2$ these are those described in Remark \ref{remarkw2,1}. The formulas for $v^{(3)}_{3,1}, v^{(3)}_{4,1}$ are obtained using (\ref{q3,3eq}), (\ref{q3,4eq}).  

\begin{lem}
\label{splitw2,2}
Denote by $y_{ij}$ the entries of $d_2$ and by $z_{ij}$ the entries of $d_3$.
 The following relations hold over the complex $\FF$, for any choice of indices such that $i_5, i_6, i_7 \in \langle i_1, i_2, i_3 \rangle$.
 $$  (W22,5):    \sum_{k=1}^{r_1}  y_{k\rho}  
\langle  v^{(3)}_{3,1}(\varepsilon_{i_1,i_2,i_3,i_4,k} \otimes e_{i_5}), \phi_h \otimes \gamma_t \wedge \gamma_u \rangle =   \delta_{\rho h}
  \langle v_{2,1}^{(1)}(\varepsilon_{i_1,i_2,i_3,i_4} \otimes e_{i_5}),  \gamma_t \wedge \gamma_u \rangle + $$ $$- \sum_{s=1}^{m} z_{hs} \langle v^{(2)}_{3,1}(\varepsilon_{i_1,i_2,i_3,i_4} \otimes e_{i_5} \otimes f_{\rho}), \gamma_t \wedge \gamma_u \wedge \gamma_s \rangle - \sum_{j=1}^4 (-1)^j \langle e_{i_5}^.e_{i_j}, \phi_h \rangle \cdot \langle v^{(2)}_2(\hat{i_j}, \hat{i_5} \otimes \phi_{\rho}), \gamma_t \wedge\gamma_u \rangle  $$
  $$-  \langle e_{i_5}^.f_{\rho}, \gamma_t \rangle \cdot \langle v^{(3)}_2(\varepsilon_{i_1,i_2,i_3,i_4}, \phi_h \otimes \gamma_u \rangle + \langle e_{i_5}^.f_{\rho}, \gamma_u \rangle \cdot \langle v^{(3)}_2(\varepsilon_{i_1,i_2,i_3,i_4}, \phi_h \otimes \gamma_t \rangle.  $$ 
 $$  (W22,7): 
\sum_{k=1}^{r_1}  y_{k\rho}  
\langle  v^{(3)}_{4,1}(\varepsilon_{k, i_1, i_2, i_3, i_4} \otimes \varepsilon_{i_5, i_6, i_7}), \phi_h \otimes \wedge_{s=1}^3 \gamma_{u_s} \rangle =  $$ 
$$ \delta_{\rho h} \langle v^{(1)}_{3,1}(\varepsilon_{i_1, i_2, i_3, i_4} \otimes \varepsilon_{i_5, i_6, i_7} ),\wedge_{s=1}^3 \gamma_{u_s}  \rangle +  \sum_{u=1}^{m} z_{hu} \langle v^{(2)}_{4,1}(\varepsilon_{i_1, i_2, i_3, i_4} \otimes \varepsilon_{i_5, i_6, i_7} \otimes f_{\rho}),\wedge_{s=1}^3 \gamma_{u_s} \wedge \gamma_u \rangle+ $$
$$ + \sum_{j=5}^7 (-1)^{j} \langle e_{j_2}^. e_{j_3}, \phi_h   \rangle \cdot \langle w^{(2)}_{3,1}(\varepsilon_{i_1, i_2, i_3, i_4} \otimes e_{i_j} \otimes f_{\rho}), \wedge_{s=1}^3 \gamma_{u_s} \rangle+  $$
$$ + \sum_{s=1}^3 (-1)^{s} \langle w^{(3)}_2(\varepsilon_{i_1, i_2, i_3, i_4}, \phi_h \otimes \gamma_{u_s} \rangle \cdot \langle w^{(2)}_2(\varepsilon_{i_5, i_6, i_7} \otimes f_{\rho}) , \hat{\gamma_{u_s}} \rangle.   $$
\end{lem}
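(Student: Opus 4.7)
The plan is to mirror the case-by-case strategy used for relations (W32,i) and (W11,i) in the preceding lemmas. Because $y_{k\rho} = \delta_{k\rho}$ when $\rho < r$ and $y_{k\rho} = 0$ when $\rho \geq r$, the left-hand sides collapse dramatically: when $\rho \geq r$ both sides must be shown to vanish, and when $\rho < r$ the left-hand side reduces to a single evaluation of $v^{(3)}_{3,1}$ (respectively $v^{(3)}_{4,1}$) on the index string obtained by substituting $k = \rho$. So in both (W22,5) and (W22,7) the task is to verify a polynomial identity in the defect variables $b^u_{ij}$, $c^{ut}_{i_1 i_2 i_3 i_4}$, and the higher-weight defect variables introduced to parametrize the generic lifts defining $v^{(3)}_{3,1}$ and $v^{(3)}_{4,1}$.

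For (W22,5) I would stratify by how many of $i_1,\ldots,i_5$ equal $r$ and by whether $h, \rho$ lie in $[1,r-1]$ or in $[r, r+m-1]$. The extreme case where all $i_j < r$ and $h,\rho < r$ is the cleanest: many summands vanish because $\langle e_a^. e_b, \phi_h \rangle = 0$ for $a,b,h < r$ and $\langle e_a^. f_\rho, \gamma_t\rangle$ is just $-b^t_{a\rho}$, so after substituting the explicit formulas for $v^{(3)}_2$, $v^{(2)}_2$ and the expression of $v^{(3)}_{3,1}$ obtained via the cycle equation (\ref{q3,3eq}), both sides reduce to an identity among the $b$'s and $c$'s. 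When some $i_j = r$, or when $h \geq r$ or $\rho \geq r$, the Kronecker-delta pieces in the elementary multiplication table contribute additional terms, and one tracks the cancellations by comparing signs in the defining equation for $v^{(3)}_{3,1}$ with the corresponding signs in the multiplication formulas used to build the right-hand side; a similar but more elaborate bookkeeping handles $\langle v^{(2)}_{3,1}(\ldots), \ldots\rangle$.

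The verification of (W22,7) follows the same schema but with (i) a longer list of subcases and (ii) much larger polynomial expressions, since $v^{(3)}_{4,1}$ is built out of $v^{(3)}_2$ and $v^{(3)}_{3,1}$ via (\ref{q3,4eq}) and is therefore quadratic in the second-graded defect data, while the right-hand side already contains six summands each of which is itself a quadratic form $\vartheta^\bullet \cdot \vartheta^\bullet$ in the $b$'s and $c$'s. One again isolates the $\rho < r$ case, substitutes closed forms for each structure map, and checks a polynomial identity term by term.

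The main obstacle is the combinatorial explosion in the last step: the identity expands into hundreds of monomials in several families of defect variables, and the cancellations depend on the skew-symmetry of the $c^{ut}_{i_1 i_2 i_3 i_4}$ and of the higher-weight defects, as well as on the precise symmetrizations built into the defining formulas (\ref{q3,3eq}), (\ref{q3,4eq}), (\ref{q2,3eq}), (\ref{q2,4eq}), (\ref{q1,2eq}), (\ref{q1,3eq}). This is intractable by hand already for (W22,5) once $r$ and $m$ are left symbolic. Following the strategy advertised in \cite{xianglong}, I would encode the split complex $\FF$, the defect-variable parametrizations of all relevant lifts, and the maps $v^{(3)}_2$, $v^{(2)}_2$, $v^{(3)}_{3,1}$, $v^{(2)}_{3,1}$, $v^{(1)}_{2,1}$, $v^{(3)}_{4,1}$, $v^{(2)}_{4,1}$, $v^{(1)}_{3,1}$ as symbolic matrices in a polynomial extension of $R$, and then check the equality of both sides of (W22,5) and (W22,7) as symbolic expressions in Macaulay2, which also confirms that the ambiguity in the chosen lifts does not affect the truth of the identities by Theorem \ref{equivariantrelations}.
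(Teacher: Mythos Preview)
Your proposal is correct and follows essentially the same approach as the paper: the paper's own proof consists solely of the sentence ``These relations can be checked by computer,'' deferring to the Macaulay2 code in \cite{xianglong}. Your additional discussion of how to stratify the cases by hand is reasonable context, but the actual verification in both your proposal and the paper is the computer check.
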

\begin{proof}
These relations can be checked by computer. 
\end{proof}

For the last lemma, assume $\FF$ to be of format $(1,5,6,2)$.
For this format, we recall how the map $ v^{(2)}_3: \bigwedge^5 F_1 \otimes F_2  \to \bigwedge^2 F_3 \otimes F_3 $ is expressed in term of the quantities $P_{i_1,i_2,i_3,i_4}^{ut}$ and $B_{i_1i_2,i_3i_4}^{ut}$ defined previously. We describe the specific cases of $\langle v^{(2)}_3(\varepsilon_{1, \ldots, 5} \otimes f_h ),  \gamma_1 \wedge \gamma_2 \otimes \gamma_t \rangle$ for $h=4$ and $h \geq 5$. The cases $h=1,2,3$ can be obtained from the case $h=4$ by permutation. 
$$  \langle v^{(2)}_3(\varepsilon_{1, \ldots, 5} \otimes f_4 ),  \gamma_1 \wedge \gamma_2 \otimes \gamma_t \rangle = \frac{1}{2}[ B^{12}_{34,45} b_{12}^{t} - B^{12}_{24,45} b_{13}^{t} + B^{12}_{14,45} b_{23}^{t} - B^{12}_{24,34} b_{15}^{t}+ $$  $$ +B^{12}_{14,34} b_{25}^{t} - B^{12}_{14,24} b_{35}^{t} ] - c_{2345}^{12} b^t_{14} + c_{1345}^{12} b^t_{24} - c_{1245}^{12} b^t_{34} + c_{1234}^{12} b^t_{45}.  $$
If $h =5,6$, then
$$   \langle v^{(2)}_3(\varepsilon_{1, \ldots, 5} \otimes f_h ),  \gamma_1 \wedge \gamma_2 \otimes \gamma_t \rangle =   P_{1,2,3,4}^{7-h,t}.    $$

For the next lemma, set $\sigma(i,k) = i+1$ if $i > k$ and $\sigma(i,k) = i$ otherwise.

\begin{lem}
\label{splitw3,3}
Assume $\FF$ to be of format $(1,5,6,2)$.
Denote by $y_{ij}$ the entries of $d_2$.  The following relation holds over the complex $\FF$, for any choice of indices.
$$  (W33):    \langle v^{(2)}_3(\varepsilon_{1, \ldots, 5} \otimes f_h ),  \gamma_1 \wedge \gamma_2 \otimes \gamma_t \rangle y_{k \rho} + \langle v^{(2)}_3(\varepsilon_{1, \ldots, 5} \otimes f_{\rho} ),  \gamma_1 \wedge \gamma_2 \otimes \gamma_t \rangle y_{kh} =  $$ 
$$  \sum_{i \neq k} (-1)^{\sigma(i,k)}  \langle v^{(2)}_2(\varepsilon_{\hat{i}, \hat{k}}, \otimes f_h),  \gamma_1 \wedge \gamma_2 \rangle \cdot \langle e_i^.f_{\rho},  \gamma_t \rangle + \langle v^{(2)}_2(\varepsilon_{\hat{i}, \hat{k}}, \otimes f_{\rho}),  \gamma_1 \wedge \gamma_2 \rangle \cdot \langle e_i^.f_h,  \gamma_t \rangle.
  $$
\end{lem}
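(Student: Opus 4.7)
The plan is to verify relation (W33) by a case analysis on whether each of $h,\rho$ lies in $\{1,2,3,4\}$ or in $\{5,6\}$, exploiting the sparsity of $d_2$ recorded earlier: $y_{ih}=\delta_{ih}$ for $h<5$ and $y_{ih}=0$ for $h\ge 5$. In each case I will plug in the explicit formulas for $v^{(2)}_3$ (the one given just before the lemma), for $v^{(2)}_2$ from the table at the start of the section, and for the pairings $\langle e_i^{\,.}f_\bullet,\gamma_\bullet\rangle$ and $\langle e_i^{\,.}e_j,\phi_h\rangle$, and match the resulting polynomial expressions in the defect variables $b^u_{ij}$ and $c^{ut}_{i_1i_2i_3i_4}$ term by term.

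First I would dispose of the case $h,\rho\ge 5$: the LHS is zero since $y_{k\rho}=y_{kh}=0$. On the RHS, the pairing $\langle e_i^{\,.}f_\rho,\gamma_t\rangle$ with $\rho\ge 5$ is nonzero only when $i=r=5$, whereas $\langle v^{(2)}_2(\varepsilon_{\hat\imath,\hat k}\otimes f_h),\gamma_1\wedge\gamma_2\rangle$ with $h\ge 5$ is nonzero only when one of the three surviving indices equals $5$, which forces $i\neq 5$; the same applies to the second summand after swapping $h,\rho$, so the RHS vanishes as well.

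In the mixed case (say $h<5\le \rho$; the opposite is symmetric), the LHS is nonzero only for $k=h$ and then equals $P^{7-\rho,t}_{1,2,3,4}$. On the RHS, only two kinds of terms survive: in the first sum the factor $\langle e_i^{\,.}f_\rho,\gamma_t\rangle$ forces $i=5$, and the factor $\langle v^{(2)}_2(\varepsilon_{\widehat 5,\hat h}\otimes f_h),\gamma_1\wedge\gamma_2\rangle$ expands via the ``$h<5$'' case of the $v^{(2)}_2$ formula to a sum of $B$'s and a single $c$; in the second sum, the factor $\langle v^{(2)}_2(\varepsilon_{\hat\imath,\hat h}\otimes f_\rho),\gamma_1\wedge\gamma_2\rangle$ forces $5\in\{1,\ldots,5\}\setminus\{i,h\}$ so $i\neq 5$, and this factor simplifies via the ``$i_3=r,\,h\ge r$'' case of $v^{(2)}_2$ to $\pm b^{7-\rho}_{jk}$ for some indices. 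Matching the resulting polynomial against the definition of $P^{7-\rho,t}_{1,2,3,4}=c^{7-\rho,t}_{1,2,3,4}+\tfrac12\sum_{j<k}(-1)^{j+k+1}b^{7-\rho}_{i_ji_k}b^{t}_{\widehat{i_j}\,\widehat{i_k}}$ is then routine.

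Finally, the main case is $h,\rho\in\{1,2,3,4\}$, where the LHS is nonzero only for $k\in\{h,\rho\}$. Substituting the explicit expression for $v^{(2)}_3(\varepsilon_{1,\ldots,5}\otimes f_j)$ for $j<5$ (a combination of six $B\cdot b$ terms and four $c\cdot b$ terms) and, on the RHS, expanding $v^{(2)}_2(\varepsilon_{\hat\imath,\hat k}\otimes f_h)$ through its ``$h<5$'' formula while using $\langle e_i^{\,.}f_\rho,\gamma_t\rangle=-b^{t}_{i\rho}$, the identity reduces to bookkeeping of monomials in the $b$'s and $c$'s, using the skew-symmetries $b^u_{ij}=-b^u_{ji}$ and the skew-symmetry of $c^{ut}_{i_1i_2i_3i_4}$ in both upper and lower indices. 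The hard part is this last case: each side after expansion is a sum of roughly thirty monomials, and matching them by hand is feasible but tedious. As with the relations (W22,5), (W22,7), (W32,3), (W32,4) treated earlier in this section, the cleanest implementation is a direct symbolic verification in Macaulay2, following the computations recorded in \cite{xianglong}.
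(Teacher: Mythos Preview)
Your approach is essentially the same as the paper's: a direct case-by-case verification over the split complex, plugging in the explicit formulas for $v^{(2)}_2$, $v^{(2)}_3$, and the pairings $\langle e_i^{\,.}f_\bullet,\gamma_\bullet\rangle$. The paper, however, organizes the cases by the index $k$ first (treating $k=5$ and then $k<5$) rather than by the pair $(h,\rho)$, and it carries out every case by hand rather than deferring the $h,\rho\le 4$ case to Macaulay2.

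There is one omission in your write-up worth flagging. In the mixed case you only analyze the RHS for $k=h$, and in the main case $h,\rho\le 4$ you only comment on $k\in\{h,\rho\}$; but the relation must also be checked for the remaining values of $k$, where the LHS vanishes but the RHS is \emph{not} trivially zero. For instance with $k=5$ and $h,\rho\le 4$ the RHS is a genuine sum of $B$-terms and $c$-terms that only cancels after using the skew-symmetries (this is exactly the computation the paper does first, e.g.\ showing $B^{12}_{24,34}b^1_{14}-B^{12}_{14,34}b^1_{24}+B^{12}_{14,24}b^1_{34}=0$ as a $3\times 3$ determinant with a repeated row). Likewise for $k<5$, $k\notin\{h,\rho\}$ in the mixed case (e.g.\ $k=1$, $h=4$, $\rho=5$ or $6$) the RHS has surviving terms from both summands that must be seen to cancel. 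These cases are no harder than the ones you did sketch, and a Macaulay2 check covers them, but your prose should acknowledge that ``LHS $=0$'' does not by itself dispose of them.
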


\begin{proof}
For simplicity take $t=1$. First consider the case $k=r=5$. Hence $y_{k\rho}= y_{kh}=0$ and the first term is zero.
If one among $h$ and $\rho$ is larger than 4, also the second term is easily seen to be zero.
Thus assume $h, \rho \leq 4$. By symmetry, it suffices to show that the second term is zero if $h=\rho=4$ and if $h=3, \rho=4$. If $h=\rho=4$ the second term becomes 
$$ -2[\langle v^{(2)}_2(\varepsilon_{234} \otimes f_4),  \gamma_1 \wedge \gamma_2 \rangle \cdot b_{14}^1 -  \langle v^{(2)}_2(\varepsilon_{134} \otimes f_4),  \gamma_1 \wedge \gamma_2 \rangle \cdot b_{24}^1 +  \langle v^{(2)}_2(\varepsilon_{124} \otimes f_4),  \gamma_1 \wedge \gamma_2 \rangle \cdot b_{34}^1= $$   
$$ -2[B^{12}_{24,34} b_{14}^1 - B^{12}_{14,34} b_{24}^1 + B^{12}_{14,24} b_{34}^1]=0,  $$ being the determinant of a $3 \times 3$ matrix with two equal rows.

If $h=3, \rho=4$, the second term becomes
$$ -\langle v^{(2)}_2(\varepsilon_{234} \otimes f_3),  \gamma_1 \wedge \gamma_2 \rangle \cdot b_{14}^1 +  \langle v^{(2)}_2(\varepsilon_{134} \otimes f_3),  \gamma_1 \wedge \gamma_2 \rangle \cdot b_{24}^1 -  \langle v^{(2)}_2(\varepsilon_{124} \otimes f_3),  \gamma_1 \wedge \gamma_2 \rangle \cdot b_{34}^1 +  $$
$$ -\langle v^{(2)}_2(\varepsilon_{234} \otimes f_4),  \gamma_1 \wedge \gamma_2 \rangle \cdot b_{13}^1 +  \langle v^{(2)}_2(\varepsilon_{134} \otimes f_4),  \gamma_1 \wedge \gamma_2 \rangle \cdot b_{23}^1 +  \langle v^{(2)}_2(\varepsilon_{123} \otimes f_4),  \gamma_1 \wedge \gamma_2 \rangle \cdot b_{43}^1 =  $$
$$ = -B^{12}_{23,43} b_{14}^1 + B^{12}_{13,43} b_{24}^1 - B^{12}_{34,24} b_{13}^1 + B^{12}_{34,14} b_{23}^1  +[B^{12}_{14,23} - B^{12}_{24,13}] b_{34}^1  =0,  $$ using the skew-symmetric property of the indices.

We work now in the case $k \neq 5$, and without loss of generality take $k=1$. Supposing $\rho, h \neq 1$ we get the first term equal to zero.
If both $\rho, h \geq 5$, also the second term is clearly zero. If $2 \leq \rho, h \leq 4$, observe that the maps $v^{(2)}_1, v^{(2)}_1$ in this case are independent from the choice of generators in $F_1$, hence we can conclude that the second term is zero exactly as done for the case $k=5$. 

Without loss of generality assume now $h=4$. If $\rho= 6$, then $ \langle e_i^.f_{\rho}, \gamma_1 \rangle = 0 $ for every $i$ and the second term reduces to be $ \langle v^{(2)}_2(\varepsilon_{245} \otimes f_6),  \gamma_1 \wedge \gamma_2 \rangle \cdot b_{34}^1 -  \langle v^{(2)}_2(\varepsilon_{345} \otimes f_6),  \gamma_1 \wedge \gamma_2 \rangle \cdot b_{24}^1 = b_{24}^1b_{34}^1-b_{34}^1b_{24}^1 =0. $ If $\rho= 5$, using that $ \langle e_5^.f_{5}, \gamma_1 \rangle = 1 $ the second term reduces to
$$ \langle v^{(2)}_2(\varepsilon_{234} \otimes f_4),  \gamma_1 \wedge \gamma_2 \rangle + \langle v^{(2)}_2(\varepsilon_{245} \otimes f_5),  \gamma_1 \wedge \gamma_2 \rangle \cdot b_{34}^1 -  \langle v^{(2)}_2(\varepsilon_{345} \otimes f_5),  \gamma_1 \wedge \gamma_2 \rangle \cdot b_{24}^1 = $$
$$  B^{12}_{34,24} - b_{24}^2b_{34}^1+b_{34}^2b_{24}^1 =0.  $$
Finally, assume $h=1$ and consider the cases $\rho= 1,4,5,6$. For $\rho= 6$, computing similarly as above the second term gives $ b_{34}^1b_{12}^1 - b_{24}^1b_{13}^1 + b_{23}^1b_{14}^1 = P_{1234}^{11}. $ For $\rho= 5$, it gives 
$$  b_{34}^2b_{12}^1 - b_{24}^2b_{13}^1 + b_{23}^2b_{14}^1 - \frac{1}{2}[B^{12}_{23,41}-B^{12}_{24,31}+B^{12}_{34,21} ] + c^{12}_{1234} = -P_{1234}^{12}.  $$ 
For $\rho=h=1$, setting
$$ \Psi_1 := b_{12}^1 [B^{12}_{34,51} - B^{12}_{35,41} + B^{12}_{45,31}] - b_{13}^1 [B^{12}_{24,51} - B^{12}_{25,41} + B^{12}_{45,21}]+$$ $$ + b_{14}^1 [B^{12}_{23,51} - B^{12}_{25,31}  + B^{12}_{35,21}] - b_{15}^1 [B^{12}_{23,41} - B^{12}_{24,31} + B^{12}_{34,21}],     $$
the second term is $ \Psi_1  +b_{12}^1c^{12}_{3451}-b_{13}^1c^{12}_{2451}+b_{14}^1c^{12}_{2351} -b_{15}^1c^{12}_{2341}$.
For $\rho=4$, similarly the second term is equal to
$ \frac{1}{2} \Psi_4 +  b_{14}^1c^{12}_{2354}-b_{24}^1c^{12}_{3451}+b_{34}^1c^{12}_{2451} -b_{45}^1c^{12}_{2341}. $
In each of the above cases, comparing with the computation of $v^{(2)}_3$, relation (W33) is satisfied.
\end{proof}




\section*{Acknowledgements}
The first and third author are supported by the grants MAESTRO NCN - \\
UMO-2019/34/A/ST1/00263 - Research in Commutative Algebra and
Representation Theory, NAWA POWROTY - PPN/PPO/2018/1/00013/U/00001 - Applications of Lie algebras to Commutative Algebra, and OPUS grant  National Science Centre, Poland grant UMO-2018/29/BST1/01290.
The authors would like to thank Ela Celikbas, Lars Christensen, David Eisenbud, Sara Angela Filippini, Jai Laxmi, Jacinta Torres, and Oana Veliche for interesting conversations about the content of this paper. They also would like to thank Craig Huneke, Claudia Polini, and Bernd Ulrich for pointing out the relation between the conjectures appearing in this paper and the unsolved question posed by Peskine and Szpiro.


\end{document}